 \newtheorem{thm}{Theorem}[section]
 \newtheorem{lem}[thm]{Lemma}
 \theoremstyle{definition}
 \numberwithin{equation}{section}
\def\ve{{\varepsilon}}
\def\rmd{\mathrm{d}}
\newtheorem{theorem}{Theorem}[section]
\theoremstyle{definition}
\theoremstyle{remark}
\newtheorem{remark}[theorem]{Remark}
\begin{document}
\title{Global Well-posedness for the Three Dimensional Simplified Inertial Ericksen-Leslie Systems Near Equilibrium}

\author{Yuan Cai
\footnote{Department of Mathematics, The Hong Kong University of Science and Technology, Clear Water Bay, Kowloon, Hong Kong.
\textit{Email}: maycai@ust.hk}
\and
Wei Wang\footnote{School of Mathematical Sciences, Zhejiang University, Hangzhou 310027, P. R. China.
\textit{Email}: wangw07@zju.edu.cn}
}
\date{}
\maketitle

\begin{abstract}
We study  a simplified inertial Ericksen-Leslie system for the nematic liquid crystal flow, which can be viewed as a system coupling Navier-Stokes
equations and wave map equations. We prove the global existence of classical solution with initial data near equilibrium.
\end{abstract}

\maketitle





\section{Introduction}
The Ericksen-Leslie system is a hydrodynamical theory for nematic liquid crystals which was
established by Ericksen \cite{Er} and Leslie \cite{Le} in 1960's. It has been successful to model various dynamical behavior for nematic liquid crystals.
In this paper, we consider the simplified Ericksen-Leslie system \cite{Le} with the inertial term:
\begin{equation}\label{inertial-EL}
\begin{cases}
\partial_t v +v\cdot \nabla v+\nabla p=
\mu \Delta v-\nabla \cdot (\nabla d \otimes \nabla d),\\
\sigma_0 D_{t}^2 d+\sigma_1 D_td-\Delta d=(|\nabla d|^2-\sigma_0 |D_t d|^2)d,\\
\nabla \cdot v=0.
\end{cases}
\end{equation}
on $\mathbb{R}^n \times \mathbb{R}^+$ $(n\geq 2)$.
{Here $v(x,t)\in\mathbb{R}^n$ is the average velocity of liquid crystal molecules, $p$ is the pressure, and
$d(x,t)\in \mathbb{S}^2$ is the directional field representing the orientation alignment of liquid crystal molecules.}
 $D_t=(\partial_t +v\cdot \nabla )$ denotes the material derivative.
{ The first two equations in $\eqref{inertial-EL}$
 represent the conservation of momentum and angular momentum respectively. The third equation is the incompressibility condition for the velocity.
The term $(|\nabla d|^2-\sigma_0 |D_t d|^2)d$ on the right hand side of $\eqref{inertial-EL}_2$
plays the role of Lagrangian multiplier for the constraint $|d|=1$.}
The term $\sigma_0 D_{t}^2 d$ is called the inertial term, while $\sigma_1 D_td$ is called the damping term. If $\sigma_0=0,\sigma_1>0$,
then $\eqref{inertial-EL}_2$ is called the simplified (non-inertial) Ericksen-Leslie system which is
a parabolic type equation and  has been widely studied in literatures since the work of Lin etc. \cite{Lin89, LL95, LL96, LL00}. If $\sigma_0>0,\sigma_1=0$, then $\eqref{inertial-EL}_2$ becomes a hyperbolic one.
We remark that the system considered here neglects complicated Leslie's stress terms
in the momentum equation and corresponding co-rotational and stretching terms in the angular momentum equation, however,
the inertial term is kept. We refer to
{\cite{Er91, Le} or \cite{JL} for its mathematical derivation and the full form}.

There have been many works on the non-inertial Ericksen-Leslie system.
For the simplified system without Leslie's stress,
Lin-Lin-Wang \cite{LLW} and Hong \cite{Ho} established the existence of global weak solutions in $\mathbb{R}^2$, see also
Lin-Wang \cite{LinW}, Xu-Zhang \cite{XZ}, Hong-Xin \cite{HX} and Lei-Li-Zhang \cite{LLZ} for related results in the two dimensional case.
Recently, Lin-Wang \cite{LWcpam} proved the global existence of weak solution for dimension three when the initial alignments $d_0$ locates
in the upper half sphere. We also refer to \cite{LiW, HW} for global existence of strong solution with small data for dimension three.
For the full non-inertial system with Leslie's stress, Wang-Zhang-Zhang \cite{WZZ} proved the local existence for general data and global existence  of smooth solutions
for data near equilibrium under optimal constraints on the Leslie coefficients for three dimensional case.
Wang-Wang\cite{WW} extended these results to the general Oseen-Frank energy case.
For 2D case, the existence of global weak solutions was proved by Wang-Wang  \cite{WW} and Huang-Lin-Wang \cite{HLW}, while
the uniqueness of weak solutions was considered by Wang-Wang-Zhang \cite{WWZ} and Li-Titi-Xin \cite{LTX}.

To the best knowledge of the authors, the results considering the Ericksen-Leslie system with inertial term are very few, especially in space dimensions higher than one.
There are some results on the one dimensional problem with general Oseen-Frank energy, in which, however, the fluid coupling is neglected (i. e. $v=0$).
We refer to \cite{BZ, ZZ05, ZZ10, ZZ12} and references therein for examples.
For the full inertial Ericksen-Leslie system, Jiang-Luo \cite{JL}  studied the local wellposedness of classical solutions  as well as the global wellposedness when the damping coefficients $\sigma_1>0$.

In this work, we study the global wellposedness of classical solutions to the simplified inertial Ericksen-Leslie system \eqref{inertial-EL} in three dimensions for the case $\sigma_1=0$.
{We remark that the absence of the term $\sigma_1 D_td$ makes the system more difficult to study than the case of $\sigma_1>0$ which bring an additional damping effect.}
In general, for quasi-linear wave equations, the solution may blow up in finite time even if the initial data are smooth and small.
However, due to the null structure nature of the nonlinear term in $\eqref{inertial-EL}_2$, one can expect existences of global smooth solutions with small initial data.
{Actually, if the fluid velocity is ignored, the system is just a equation for wave maps, which
the null condition is satisfied and is now well known to have global small solutions.}

In addition, we assume $\sigma_0=1,\ \mu=1$ without loss of generality.
The main result of this paper is stated as follows (the notations will be explained in Section 2.2):
\begin{thm}\label{thm}
Let
$(v_0(x),d_0(x)-e)\in$
{$H^\kappa_\Lambda$} with $\kappa\geq 9$ where $e\in \mathbb{S}^2$ is some constant director.
Suppose
$$\|(v_0,d_0-e,d_1)\|^2_{H^\kappa_\Lambda}\leq \epsilon.
$$
There exists a positive constant
$\epsilon_0$ such that, if $\epsilon \leq \epsilon_0$, the system (\ref{inertial-EL}) with initial data
$$v(x,0)=v_0(x),\quad d(x,0)=d_0(x),\quad \partial_t d(x,0)=d_1(x)$$
has a unique global classical solution such that
\begin{equation}\label{thmpri}
E^d_{\kappa+1}(t)\leq C_0 \epsilon \langle t\rangle^{\delta},\quad E^v_{\kappa}+E^d_{\kappa-1}(t)\leq C_0\epsilon
\end{equation}
for some positive $C_0>0$, $0<\delta< \frac12$ depending on $\kappa$ and $\epsilon_0$ uniformly for all $0\le t<\infty$.
\end{thm}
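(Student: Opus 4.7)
The plan is to combine a standard local well-posedness result with a bootstrap on \eqref{thmpri}. Local existence of a classical solution at regularity $\kappa\geq 9$ follows from the general theory for coupled parabolic--hyperbolic systems (compare Jiang--Luo \cite{JL}). Writing $d=e+n$ with the constraint $e\cdot n=-\tfrac12|n|^2$ reformulates \eqref{inertial-EL} as a perturbation problem for $(v,n)$ with quadratic and cubic nonlinearities. The continuity argument then reduces to proving that, on any interval $[0,T]$ on which \eqref{thmpri} holds with constant $2C_0$, it already holds with $C_0$, provided $\epsilon_0$ is chosen sufficiently small.

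\textbf{Energy estimates at two levels.} The heart of the proof is the propagation of the two-tier energy bound via $\Lambda$-differentiated identities. I would apply the generators in $\Lambda$---presumably some combination of spatial derivatives, spatial rotations and vector fields adapted to the d'Alembertian---and test the velocity equation against $v$ and the director equation against $D_t n$. After summing over derivative multi-indices and handling the commutators, this should yield at each derivative order $k\leq\kappa+1$ an inequality schematically of the form
\begin{equation*}
\tfrac{d}{dt}\mathcal{E}_k+\mu\,\mathcal{D}_k\lesssim \mathcal{N}_k,
\end{equation*}
where $\mathcal{D}_k$ is the Navier--Stokes dissipation and $\mathcal{N}_k$ collects nonlinear and commutator terms. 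At the top level $k=\kappa+1$ the commutator between $\Lambda$-derivatives and the transport $v\cdot\nabla$ cannot be absorbed by $\mathcal{D}_k$ and forces a Gronwall factor
\begin{equation*}
\tfrac{d}{dt}E^d_{\kappa+1}\lesssim \bigl(\|\nabla v\|_{L^\infty}+\|\nabla d\|_{L^\infty}^2+\|D_t d\|_{L^\infty}^2\bigr)\,E^d_{\kappa+1}+\mathrm{l.o.t.},
\end{equation*}
producing at worst the slow growth $\langle t\rangle^\delta$ once the pointwise norms on the right are integrable in time. At the intermediate orders $k\leq\kappa$ the cubic structure of the nonlinear error removes the Gronwall factor, so $E^v_{\kappa}+E^d_{\kappa-1}$ remains uniformly of size $\epsilon$.

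\textbf{Main obstacle: dispersive decay without damping.} The principal difficulty is that the director equation has no damping term, so the hyperbolic part of the energy is only conserved to leading order and nothing in the basic identity prevents the nonlinear feedback from driving blow-up. To render the sources time-integrable I would derive pointwise decay of $\nabla n$ and $D_t n$ through a Klainerman--Sobolev type inequality applied to the vector-field-controlled quantities defining $H^\kappa_\Lambda$; uniform control of $E^d_{\kappa-1}$ should translate into at least $\langle t\rangle^{-1}$-decay at several lower-order derivatives. Parabolic smoothing for $v$, fed by the forcing $\nabla\cdot(\nabla d\otimes\nabla d)$ through Duhamel, should independently yield $\|\nabla v\|_{L^\infty}\in L^1_t$, closing the Gronwall loop at the top order.

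\textbf{Where the difficulty concentrates.} The delicate point is reconciling the hyperbolic vector fields natural for the wave-map component with the transport term $v\cdot\nabla$ and with the purely spatial Laplacian dissipating $v$; commutators of the form $[\Lambda,v\cdot\nabla]$ and $[\Lambda,\Delta]$ must be split into pieces absorbed by $\mathcal{D}_k$ and pieces controlled by the decay hierarchy. Moreover, the unit-length constraint $|d|=1$, which makes $e\cdot n$ quadratically small in the transverse components of $n$, must be exploited carefully so that the cubic sources $(|\nabla d|^2-|D_t d|^2)d$ do not contaminate the decay at lower levels. Managing this compatibility---and allocating derivatives so that the $\langle t\rangle^\delta$ loss at top order never propagates into the uniformly bounded norms---is, in my view, the main technical hurdle and plausibly accounts for the prescribed thresholds $\kappa\geq 9$ and $\delta<\tfrac12$.
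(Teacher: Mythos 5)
Your outline reproduces the paper's skeleton (local well-posedness from \cite{JL}, a two-tier bootstrap on $E^v_\kappa$, $E^d_{\kappa+1}$, $E^d_{\kappa-1}$, vector-field energies, heat-kernel decay for $v$), but two ideas that actually make the scheme close are missing, and one mechanism is misstated. First, your claim that at the levels $k\le\kappa$ ``the cubic structure of the nonlinear error removes the Gronwall factor'' is not correct: writing $d=e+n$ does not upgrade the wave-map source, since $(|\nabla d|^2-|\partial_t d|^2)d=(|\nabla n|^2-|\partial_t n|^2)(e+n)$ is still genuinely quadratic in $\partial n$. A quadratic wave--wave interaction in $3$D with only the $\langle t\rangle^{-1}$ decay provided by Klainerman--Sobolev is borderline and would not give a uniformly bounded $E^d_{\kappa-1}$; the paper obtains the needed $\langle t\rangle^{-3/2}$ integrability by exploiting the null structure of $|\nabla d|^2-|\partial_t d|^2$ near the light cone (decomposing into $(\omega_i\partial_t+\nabla_i)(\omega_i\partial_t-\nabla_i)$ and using the improved decay of the good derivative $(\partial_t+\partial_r)\partial Z^a d$, Lemmas \ref{GoodDeri} and \ref{lemWeiGood}), together with the Klainerman--Sideris weighted norms $X^d_\kappa$ away from the cone, which substitute for the missing Lorentz and scaling invariance. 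None of this appears in your proposal, and without it the uniform bound on $E^d_{\kappa-1}$ --- which the whole bootstrap hinges on --- does not follow.

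Second, at top order the dangerous terms are not the commutators $[\Lambda,v\cdot\nabla]$ you single out, but the quasilinear terms $\partial_t Z^a v\cdot\nabla d$, $2v\cdot\nabla\partial_t Z^a d$ and $v\cdot\nabla(v\cdot\nabla Z^a d)$ with $|a|=\kappa$: they involve $\partial_t Z^\kappa v$, which is controlled by neither $E^v_\kappa$ nor the dissipation, so a naive estimate loses a derivative. The paper resolves this by integrating by parts in time and re-inserting the $d$-equation, whereupon these terms combine into exact time derivatives of $|(Z^a v\cdot\nabla)d|^2$ and $(Z^a v\cdot\nabla)d\cdot\partial_t Z^a d$ (the estimate of $I_{12}$); your proposal does not address this loss at all. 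Finally, the source of the $\langle t\rangle^{\delta}$ growth is mischaracterized: if the Gronwall coefficients were time-integrable you would get a uniform bound, whereas in fact the coefficient $\langle t\rangle^{-1}(E^v_\kappa+E^d_{\kappa-1})^{1/2}\sim\epsilon^{1/2}\langle t\rangle^{-1}$ is only borderline, and it is the smallness of the prefactor that yields $\langle t\rangle^{C\epsilon^{1/2}}\le\langle t\rangle^{\delta}$ with $\delta<\tfrac12$. A related technical point you gloss over is that the scaling operator does not commute with the viscosity, producing lower-order dissipative terms of indefinite sign that must be absorbed through the iteration lemma (Lemma \ref{lemItera}) rather than by the decay hierarchy.
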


\begin{remark}
Following the recent works \cite{CLLM, Lei16, LW}, we can apply the ghost weight energy method \cite{Alinhac01a} to even prove a uniform bound for the highest-order energy.
However,  as the current estimate is sufficient to obtain the global existences of solutions, we will not pursue this direction in this paper.
\end{remark}
\begin{remark}
The regularity index $\kappa$ may be lowered a little bit. However, restricted by vector field method, we can not expect it close to the natural energy space.
Hence, we do not aim to find the lowest regularity index here.
\end{remark}
\begin{remark}
Inspired by the threshold global regularity on wave maps \cite{SterbenzTataru1, SterbenzTataru2}, it might be natural to conjecture that the similar results also hold for the two dimensional inertial liquid crystal. This issue will be addressed in our future work.
\end{remark}

We will perform the analysis under Eulerian coordinates, although Lagrangian coordinates may also work.
The main difficulty in proving global solutions lies in the contradiction between parabolicity and the hyperbolicity. In another words, the parabolic method and the hyperbolic approach are not compatible.
To overcome this essential difficulty, we need use an unified approach working both in parabolic equations and hyperbolic ones. This is the main novelty of this paper.

Considering the wave nature of the equation for $d$, we use the vector field theory and the weighted energy method. However, the
system has neither Lorentz invariance nor scaling invariance. We use the weighted $L^2$ norm introduced by Klainerman and Sideris \cite{KlainermanSiderisS96}
to overcome the difficulty from the lack of Lorentz invariance, while the lack of scaling invariance is {solved} by directly applying the scaling operator and dealing with the commutators \cite{Kessenich}.
To close the energy estimate, it is necessary to obtain the subcritical decay for the solution. However, the interaction between velocity filed and the orientation field weakens the dissipative nature of the velocity field and the dispersive effect of the orientation filed, and even worse, it is strengthened by the quasilinear nature of the system. These difficulties make the decay estimate very delicate.
In addition,  to close the higher-order energy estimate, we need to explore the symmetry structure of the system to deal with the quasilinear terms which may cause loss of derivatives at a first glance. This structure is not
obvious in the Eulerian formulation.

Before ending the introduction, we simply review some related results concerning the wave maps
 which corresponds to the inertial Ericksen-Leslie system \eqref{inertial-EL} when velocity is equal zero and
 $\sigma_0=1$, $\sigma_1=0$.
The Cauchy problem for the wave maps has been extensively studied, especially when space dimension is two, where the scaling is invariant in the natural energy space.
In the supercritical case $n\geq 3$, it was observed by Shatah \cite{Shatah88} that there
exist self-similar blow up solutions of finite energy.
In the critical case n = 2, Struwe \cite{Struwe} observed
that in the equivariant setting, blow up in this dimension must result from { a strictly slower
than self-similar rescaling of a harmonic sphere of finite energy}.
Finally, the blow up for this more difficult two-dimensional case was by
 exhibited by Rodnianski-Sterbenz \cite{RodnianskiSterbenz}, as well as Krieger-Schlag-Tataru \cite{KST}.
As for the well-posedness results, the wave map displays a null form structure, which was the essential feature in the subcritical theory of Klainerman-Machedon \cite{KlainermanMachedon1, KlainermanMachedon2, KlainermanMachedon3}, and Klainerman-Selberg \cite{KlainermanSelberg97,KlainermanSelberg02}.
These authors proved strong local well-posedness for data in $H^s(\mathbb{R}^n)$ when $s > n/2$.
The important critical theory $s = n/2$ was begun
by Tataru \cite{TataruLocal, TataruBesov}.
 In a breakthrough work, Tao \cite{TaoI, TaoII}
 proved the global regularity for small initial data in
 $\dot{H}^{\frac n2}(\mathbb{R}^n) \times \dot{H}^{\frac n2-1}(\mathbb{R}^n)$ when the target is a sphere by renormalization of the geometric structure.
Afterwards-Klainerman-Rodnianski \cite{KlaRod},
Nahmod-Stephanov-Uhlenbeck \cite{NSU}, Tataru \cite{TataruRough,Tatara04}, and
Krieger \cite{Krieger03I, Krieger03II, Krieger04} considered
other cases of targets by using similar methods as in Tao's work.
As for large initial data, Sterbenz-Tataru \cite{SterbenzTataru1, SterbenzTataru2} gave the following satisfactory result: If the energy of the initial data is smaller than the energy of any
nontrivial harmonic map $\mathbb{R}^n\rightarrow M$ , then one has the global existence theory.
A particular case is the hyperbolic space $\mathbb{H}^n$ for
which Tao \cite{ TaoIII, TaoIV, TaoV, TaoVI, TaoVII} as well as Krieger-Schlag \cite{KS} has achieved the same result.

The remaining part of this paper is organized as follows. In the next section,
we will introduce the vector fields applied onto the system, ansatz for the method of continuity, and some preliminary estimates.
In Section 3, we will estimate of weighted $L^2$ norm.
Section 4 is devoted to the decay estimate of velocity. In the last
section, we give various energy estimates which are crucial steps to prove Theorem 1.

\section{Preliminaries}
\subsection{Application of vector fields}

As stated in the introduction, we will use various vector fields which play a central role in
the proofs.
To begin with, we rewrite the system \eqref{inertial-EL} as
\begin{equation}\label{LiquidHyper}
\begin{cases}
\partial_t v +v\cdot \nabla v+\nabla p=
\mu \Delta v-\nabla \cdot (\nabla d \otimes \nabla d),\\[-4mm]\\
\partial_{tt} d-\Delta d=(|\nabla d|^2-|\partial_t d|^2-|v\cdot\nabla d|^2-2 v\cdot\nabla d\cdot\partial_t d)d\\[-4mm]\\
\qquad\qquad\quad-(2v\cdot\nabla\partial_t d+\partial_tv\cdot\nabla d+v\cdot\nabla(v\cdot\nabla d)),\\[-4mm]\\
\nabla \cdot v=0.
\end{cases}
\end{equation}
We have seven nonlinear terms in the equations for the orientation field, which make the whole argument a little annoying.

Now let us take a look at the invariant groups of the system \eqref{LiquidHyper}.
Suppose that $(u(t, x), d(t,x))$ is a solution of \eqref{LiquidHyper},
then one can check that $(Q^\top u(t, Qx), d(t,Qx))$ is also a solution of \eqref{LiquidHyper}
for any orthogonal matrix $Q$. We choose $Q$
to be one parameter group generated by anti-symmetric matrices:
\begin{equation*}
 Q = e^{\lambda
A_{i}},\quad \forall\ 1\leq i\leq 3,
\end{equation*}
where
\begin{equation*}
\begin{split}
&A_{1}=e_2\otimes e_3- e_3\otimes e_2,\\
&A_{2}=e_3\otimes e_1- e_1\otimes e_3,\\
&A_{3}=e_1\otimes e_2- e_2\otimes e_1.
\end{split}
\end{equation*}
The perturbed angular momentum
operators are defined  as infinitesimal generators of the orthogonal groups:
\begin{equation*}
\begin{cases}
\widetilde{\Omega}_{i} v=\Omega_{i} v+A_{i}v,\\
\widetilde{\Omega}_{i} d=\Omega_{i} d,
\end{cases}
\end{equation*}
where $\Omega=(\Omega_1,\Omega_2,\Omega_3)$ is the rotational gradient operator defined by
\begin{equation*}
\Omega = x \wedge \nabla.
\end{equation*}
Schematically, we have the commutation:
\begin{equation}\label{commuR}
[\partial,\Omega]=\partial,
\end{equation}
where $\partial$  means the span of $\{\partial_t, \partial_1, \partial_2, \partial_3\}$.
\eqref{commuR} and the following commutation \eqref{commuS} will be often used implicitly.

Putting $(Q^\top u(t, Qx), d(t,Qx))$ into $\eqref{LiquidHyper}$, differentiating them with respect to $\lambda$ and taking $\lambda \rightarrow 0$, we have
\begin{equation}\label{LiquidHyper_Rot}
\begin{cases}
\partial_t \widetilde{\Omega} v +\widetilde{\Omega}v\cdot \nabla v+v\cdot \nabla \widetilde{\Omega}v+\nabla \widetilde{\Omega}p\\[-4mm]\\
\qquad\qquad=\mu \Delta \widetilde{\Omega}v-  \nabla \cdot (\nabla \widetilde{\Omega} d \otimes \nabla d)-  \nabla \cdot (\nabla d \otimes \nabla \widetilde{\Omega} d),\\[-4mm]\\
\partial_{tt} \widetilde{\Omega} d-\Delta \widetilde{\Omega} d=
2(\nabla \widetilde{\Omega} d\cdot\nabla d-\partial_t \widetilde{\Omega} d\cdot \partial_t d)d\\[-4mm]\\
\qquad\qquad\quad -(2 \widetilde{\Omega}v\cdot\nabla d\cdot\partial_t d
+2 v\cdot\nabla \widetilde{\Omega}d\cdot\partial_t d
+2 v\cdot\nabla d\cdot\partial_t \widetilde{\Omega}d)d \\[-4mm]\\
\qquad\qquad\quad-2\big[ (\widetilde{\Omega} v\cdot \nabla) d \cdot(v\cdot \nabla) d
+ (v\cdot \nabla)\widetilde{\Omega} d \cdot(v\cdot \nabla) d \big]d \\[-4mm]\\
\qquad\qquad\quad+
(|\nabla d|^2-|\partial_t d|^2-|v\cdot\nabla d|^2-2 v\cdot\nabla d\cdot\partial_t d)\widetilde{\Omega}d \\[-4mm]\\
\qquad\qquad\quad-(2\widetilde{\Omega}v\cdot\nabla\partial_t d+2v\cdot\nabla\partial_t \widetilde{\Omega}d
+\partial_t\widetilde{\Omega}v\cdot\nabla d +\partial_tv\cdot\nabla \widetilde{\Omega}d )\\[-4mm]\\
\qquad\qquad\quad-\big[\widetilde{\Omega}v\cdot\nabla(v\cdot\nabla d)
+v\cdot\nabla(\widetilde{\Omega}v\cdot\nabla d)
+v\cdot\nabla(v\cdot\nabla \widetilde{\Omega}d)\big].\\[-4mm]\\
\nabla \cdot \widetilde{\Omega}v=0.
\end{cases}
\end{equation}

Next, we try to apply the scaling operator
\begin{equation*}
S = t\partial_t +r\partial_r
\end{equation*}
onto the system.
Unfortunately, the system \eqref{LiquidHyper} does not have any scaling invariance (nor Lorentz invariance).
Inspired by \cite{CLLM, Kessenich}, however, we are still able to use the scaling operator under this circumstance.

Applying $S+1$ onto $\eqref{LiquidHyper}_1$, $\eqref{LiquidHyper}_3$, and applying $S+2$ onto ${\eqref{LiquidHyper}}_2$, thanks to the commutation:
\begin{equation} \label{commuS}
(S+1)\partial=\partial S, \quad (S+2)\partial^2=\partial^2 S,
\end{equation}
 we can derive by direct calculations  that
\begin{equation}\label{LiquidHyper_Scale}
\begin{cases}
\partial_t S v +Sv\cdot \nabla v+v\cdot \nabla Sv+\nabla Sp \\[-4mm]\\
\qquad\qquad=
\mu \Delta (S-1) v-  \nabla \cdot (\nabla (S-1) d \otimes \nabla d)-  \nabla \cdot (\nabla d \otimes \nabla (S-1) d),\\[-4mm]\\
\partial_{tt} S d-\Delta S d=
2(\nabla S d\cdot\nabla d-\partial_t S d\cdot \partial_t d)d\\[-4mm]\\
\qquad\qquad\quad -(2 Sv\cdot\nabla d\cdot\partial_t d
+2 v\cdot\nabla Sd\cdot\partial_t d
+2 v\cdot\nabla d\cdot\partial_t Sd)d \\[-4mm]\\
\qquad\qquad\quad-2\big[ (S v\cdot \nabla) d \cdot(v\cdot \nabla) d
+ (v\cdot \nabla)S d \cdot(v\cdot \nabla) d \big]d \\[-4mm]\\
\qquad\qquad\quad+
(|\nabla d|^2-|\partial_t d|^2-|v\cdot\nabla d|^2-2 v\cdot\nabla d\cdot\partial_t d)Sd \\[-4mm]\\
\qquad\qquad\quad-(2Sv\cdot\nabla\partial_t d+2v\cdot\nabla\partial_t Sd
+\partial_tSv\cdot\nabla d +\partial_tv\cdot\nabla Sd )\\[-4mm]\\
\qquad\qquad\quad-\big[Sv\cdot\nabla(v\cdot\nabla d)
+v\cdot\nabla(Sv\cdot\nabla d)
+v\cdot\nabla(v\cdot\nabla Sd)\big].\\[-4mm]\\
\nabla \cdot Sv=0.
\end{cases}
\end{equation}
Note that there are some commutators in the equation for velocity in
\eqref{LiquidHyper_Scale} due to the appearance of viscosity.
This is different from the application of rotation operator.

Now we are going to apply compositions of generalized operators.
Let 
\begin{equation*}
\Gamma \in \{\partial_t,\partial_1,\partial_2,\partial_3,
\widetilde{\Omega}_1,\widetilde{\Omega}_2,\widetilde{\Omega}_3\},
\end{equation*}
and $Z^a=S^{a_1}\Gamma^{a'} $, where $a=(a_1,a_2,...,a_8)=(a_1,a')$,
$\Gamma^{a'}=\Gamma^{a_2}\Gamma^{a_3}...\Gamma^{a_8}$.
Using the reduction argument, we can use \eqref{LiquidHyper_Rot} and \eqref{LiquidHyper_Scale} to derive that
\begin{equation}\label{LiquidHyper_GeDe}
\begin{cases}
\partial_t Z^a v
-\mu \Delta (S-1)^{a_1}\Gamma^{a'} v =f^1_a, \\[-4mm]\\
\partial_{tt} Z^a d-\Delta Z^a d= f^2_a, \\[-4mm]\\
\nabla \cdot Z^a v=0.
\end{cases}
\end{equation}
where
\begin{equation}\label{f1f2}
\begin{cases}
f^1_a=-\sum_{b+c=a} C_a^b  Z^b v\cdot \nabla Z^c v
-\nabla Z^a p \\[-4mm]\\
\qquad-\sum_{b+c=a} C_a^b
\nabla \cdot (\nabla (S-1)^{b_1}\Gamma^{b'} d \otimes \nabla(S-1)^{c_1}\Gamma^{c'} d),\\[-4mm]\\
f^2_a=\sum_{b+c+e=a} C_a^{b,c}
(\nabla Z^b d\cdot\nabla Z^c d-\partial_t Z^b d\cdot \partial_t Z^c d) Z^e d \\[-4mm]\\
\qquad-\sum_{b+c+e+f=a} C_a^{b,c,e} \big[2 (Z^b v\cdot\nabla) Z^c d\cdot\partial_t Z^e d \big] Z^f d\\[-4mm]\\
\qquad-\sum_{b+c+e+f+g=a}C_a^{b,c,e,f}
\big[  ( Z^b v\cdot \nabla) Z^c d \cdot ((Z^e v\cdot \nabla) Z^f d)  \big] Z^g d \\[-4mm]\\
\qquad-\sum_{b+c+e=a}C_a^{b,c}Z^b v\cdot\nabla( Z^c v\cdot \nabla Z^e d) \\[-4mm]\\
\qquad-\sum_{b+c=a} C_a^b
(2Z^b v\cdot\nabla\partial_t Z^c d+\partial_t Z^b v\cdot\nabla Z^c d).
\end{cases}
\end{equation}
Here $C_a^b$, $C_a^{b,c}$, $C_a^{b,c,e}$ and $C_a^{b,c,e,f}$ are multinomial coefficients:
\begin{align*}
&C_a^b=\frac{a !}{b ! (a-b)!},
\quad C_a^{b,c}=\frac{a !}{b !c! (a-b-c)!},\\
& C_a^{b,c,e}=\frac{a !}{b !c! e!(a-b-c-e)!},
\quad C_a^{b,c,e,f}=\frac{a !}{b !c! e! f!(a-b-c-e-f)!}.
\end{align*}
The above commutation relations \eqref{LiquidHyper_GeDe}-\eqref{f1f2} are the starting points of this paper.

\subsection{Some notations}
Throughout this paper, we use the generalized energy defined by
\begin{equation*}
E^v_{\kappa}(t) = \|Z^\kappa v(t,\cdot)\|_{L^2}^2,
\quad
E_{\kappa+1}^d(t) = \|\partial Z^{\kappa} d(t,
\cdot)\|_{L^2}^2.
\end{equation*}
where $Z^\kappa v=\{Z^a v: |a|\leq \kappa \}$,
$Z^\kappa d=\{Z^a d: |a|\leq \kappa \}$.

We also use the weighted energy norm of Klainerman-Sideris \cite{KlainermanSiderisS96}:
\begin{equation*}\label{WEnergy}
X^d_\kappa(t) = \| \langle r-t\rangle \partial^2 Z^{\kappa-2} d\|^2_{L^2},
\end{equation*}
in which $\langle \sigma\rangle = \sqrt{1 + \sigma^2}$, for $\kappa\geq 2$.

To describe the space of  initial data, we introduce (see \cite{Sideris00})
\begin{equation}\nonumber
\Lambda = \{\nabla, \widetilde\Omega,r\partial_r\},
\end{equation}
and
\[
H^\kappa_\Lambda =\{(f,g,h):\sum_{|a|\le \kappa}\|\Lambda^a f\|_{L^2}+\|\nabla \Lambda^a g\|_{L^2}+\|\Lambda^a h\|_{L^2}<\infty\},
\]
with the norm
\[
\|(f,g,h)\|_{H^\kappa_\Lambda} =\sum_{|a|\le \kappa}\big(\|\Lambda^a f\|_{L^2}+\|\nabla \Lambda^a g\|_{L^2}+\|\Lambda^a h\|_{L^2} \big),
\]
for the vector of $f$, $g$ and $h$.

Throughout the whole paper, we will use $A\lesssim B$ to denote $A\leq C B$ for some positive absolute constant $C$,
whose meaning may change from line to line. We remark that, without specification, the constant  depends only on $\kappa$, but not on $t$.

\subsection{Ansatz for the method of continuity}
The local well-posedness has been proved recently in \cite{JL}. To extend the local solution to be a global one, we need to show the uniform estimate in time.

We make the following ansatz for the generalized energy:
\begin{eqnarray*}
E^v_{\kappa}\leq C\epsilon, \quad
E^d_{\kappa+1}\leq C\epsilon\langle t\rangle^{\delta},  \quad
E^d_{\kappa-1}\leq C\epsilon,
\end{eqnarray*}
for $\kappa\geq 9$, which is \eqref{thmpri} in Theorem \ref{thm}. Under the above a priori assumption,
we first show that the weighted $L^2$ norm $X_{\kappa}$ can be controlled by the generalized energy:
\begin{equation*}
X^d_{\kappa-1}\lesssim E^d_{\kappa-1}, \quad
X^d_{\kappa+1}\lesssim E^d_{\kappa+1},
\end{equation*}
see Section 3. Next, we show the decay estimate for the velocity in Section 4:
\begin{eqnarray}
&&\| \langle t\rangle^{\frac34}v\|_{L^\infty_x}\lesssim \epsilon^{\frac12}, \quad
\| \langle t\rangle^{\frac54}\nabla v\|_{L^\infty_x}\lesssim \epsilon^{\frac12}, \nonumber\\[-4mm]\nonumber\\
&&\|\langle t \rangle^{\frac32} \nabla^a v\|_{L^\infty_{x}} \leq \epsilon^{\frac12} (\ln {\langle t\rangle})^{\frac12},
\quad \forall\  2\leq |a| \leq  \kappa-3, \nonumber\\[-4mm]\nonumber\\
&&\|\partial_t v\|_{L^\infty_x}\lesssim\langle t\rangle^{-\frac32} (\ln \langle t \rangle)^{\frac12}\epsilon^{\frac12}. \label{prioridec}
\end{eqnarray}
Then, 
we can finally close the energy estimates by establishing the following inequalities:
\begin{align}
& E^v_{\kappa}(t)
+ \mu \|\nabla Z^k v(\cdot,\cdot) \|^2_{L^2_tL^2_x} \nonumber\\
&\quad\ \qquad\lesssim E^v_{\kappa}(0)+\int_0^t\langle \tau \rangle^{-2} E^d_{\kappa+1}(\tau) E^d_{\kappa-1}(\tau) \ \rmd \tau+\|\nabla Z^k v(\cdot,\cdot) \|^2_{L^2_tL^2_x}
E^v_{\kappa}(t). \label{priorivel} \\
&\frac{d}{dt}E^d_{\kappa+1} \lesssim \langle t\rangle^{-1}
(E^v_{\kappa}+E^d_{\kappa-1})^{\frac{1}{2}}
E^d_{\kappa+1}+\|\nabla Z^\kappa v\|^2_{L^2} E^d_{\kappa+1}\nonumber\\[-4mm]\nonumber\\
&\quad\qquad \qquad+\langle t\rangle^{-1}
\|\nabla Z^\kappa v\|_{L^2} E^d_{\kappa+1}
+\|\partial_t v\|_{L^\infty}E^d_{\kappa+1}. \label{prioridh} \\
&\frac{d}{dt}E^d_{\kappa-1}
\lesssim \langle t\rangle^{-1} \|\nabla Z^{\kappa} v\|^{\frac{1}{2}}_{L^2}(E^v_{\kappa})^{\frac14}(E^d_{\kappa-1}E^d_{\kappa+1})^{\frac12}\nonumber\\
&\qquad\qquad\quad+\langle t\rangle^{-\frac32} E^d_{\kappa-1} (E^d_{\kappa+1})^{\frac12}
+\| \nabla Z^{\kappa} v\|^2_{L^2}E^d_{\kappa-1}. \label{prioridl}
\end{align}
This is the main topic in Section 5.

With the help of above estimates  \eqref{prioridec}-\eqref{prioridl},  we can prove the main theorem of the paper.
\begin{proof}[Proof of Theorem \ref{thm}]
It suffices to show that under the bootstrap assumption:
\begin{equation}\label{pri1}
E^d_{\kappa+1}(t)\leq C_0 \epsilon \langle t\rangle^{\delta},\quad E^v_{\kappa}(t)+ \frac12\mu \|\nabla Z^k v(\cdot,\cdot) \|^2_{L^2_tL^2_x}+E^d_{\kappa-1}(t)\leq C_0\epsilon\ll 1,
\end{equation}
for some positive $C_0>0$, $0<\delta<\frac12$ depending on $\kappa$ and $\epsilon_0$ uniformly for $0\le t<T$, we can derive a stronger estimate:
\begin{equation}\label{pri2}
E^d_{\kappa+1}(t)\leq \frac12 C_0 \epsilon \langle t\rangle^{\delta},
\quad E^v_{\kappa}(t)+ \frac12\mu \|\nabla Z^k v(\cdot,\cdot) \|^2_{L^2_tL^2_x}+E^d_{\kappa-1}(t)\leq \frac12 C_0\epsilon.
\end{equation}
Then we can use continuity argument to extend life span of the solutions.

Firstly, under the assumption of \eqref{pri1}, we have from \eqref{priorivel} that
\begin{align*}
E^v_{\kappa}(t)+ \frac12\mu \|\nabla Z^k v(\cdot,\cdot) \|^2_{L^2_tL^2_x}
&\leq CE^v_{\kappa}(0)+C\int_0^t\langle \tau \rangle^{-2} E^d_{\kappa+1}(\tau) E^d_{\kappa-1}(\tau) \ \rmd \tau\nonumber\\
&\leq CE^v_{\kappa}(0)+C\int_0^t\langle \tau \rangle^{\delta-2} C^2_0\epsilon^2   \ \rmd \tau \nonumber\\
&\leq C\epsilon+ CC^2_0\epsilon^2.
\end{align*}
Let $C_0$ and $\epsilon$ such that
\begin{equation}\label{condition1}
\mathrm{max}\{8, 8C\}\leq  C_0,\quad  8CC_0 \epsilon \leq 1,
\end{equation}
then $E^v_{\kappa}(t)+ \frac12\mu \|\nabla Z^k v(\cdot,\cdot) \|^2_{L^2_tL^2_x}\leq \frac14 C_0\epsilon$.

Secondly, under the assumption of \eqref{pri1}, we can derive from \eqref{prioridh} that
\begin{eqnarray*}
\frac{d}{dt}E^d_{\kappa+1} \leq C\big(\langle t\rangle^{-1}
(E^v_{\kappa}+E^d_{\kappa-1})^{\frac{1}{2}}
+\|\nabla Z^\kappa v\|^2_{L^2} +\langle t\rangle^{-1}\|\nabla Z^\kappa v\|_{L^2}
+\|\partial_t v\|_{L^\infty} \big)E^d_{\kappa+1}. \label{prioridh1}
\end{eqnarray*}
Then, Gronwall inequality gives us that
\begin{align*}
E^d_{\kappa+1}(t)&\leq  E^d_{\kappa+1}(0)\exp\Big(C\int_0^t
\langle \tau \rangle^{-1}
(E^v_{\kappa}+E^d_{\kappa-1})^{\frac{1}{2}}(\tau)
+\|\nabla Z^\kappa v(\tau)\|^2_{L^2} \\
&\qquad\qquad+\langle \tau\rangle^{-1}\|\nabla Z^\kappa v(\tau)\|_{L^2}
+\|\partial_t v\|_{L^\infty}  d\tau\Big)\\
&\leq \epsilon\exp \Big( C(C_0\epsilon)^{\frac12}\ln\langle t\rangle +CC_0\epsilon+C(C_0\epsilon)^{\frac12}  +C\epsilon^{\frac12}\Big)\\
&\leq \epsilon \exp(C (C_0\epsilon)^\frac12 ) \langle t\rangle^{CC_0^{\frac12}\epsilon^{\frac12}}.
\end{align*}
Taking $\epsilon$ small enough such that
\begin{equation}\label{condition2}
\exp(C (C_0\epsilon)^\frac12 )\leq \frac12C_0, \quad CC_0^{\frac12}\epsilon^{\frac12}\leq \delta\leq \frac12,
\end{equation}
we have $E^d_{\kappa+1}(t)\leq \frac12 C_0\epsilon \langle t \rangle^{\delta}$.

Finally, using Holder's inequality, we derive from \eqref{prioridl} that
\begin{eqnarray*}
\frac{d}{dt}E^d_{\kappa-1}
&\leq& C\big(\langle t\rangle^{-1} \|\nabla Z^{\kappa}v\|^{\frac{1}{2}}_{L^2}(E^d_{\kappa+1})^{\frac12}
+\langle t\rangle^{-\frac32}  (E^d_{\kappa+1})^{\frac12}
+\| \nabla Z^{\kappa} v\|^2_{L^2}\big)E^d_{\kappa-1}\nonumber\\
&&+C\langle t\rangle^{-1} \|\nabla Z^{\kappa}v\|^{\frac{1}{2}}_{L^2}(E^v_{\kappa})^{\frac12}(E^d_{\kappa+1})^{\frac12}. \label{prioridl1}
\end{eqnarray*}
Then the Gronwall inequality gives us that
\begin{align*}
E^d_{\kappa-1}(t)\leq
&\Big(E^d_{\kappa-1}(0)+C\int_0^t \langle \tau \rangle^{-1} \|\nabla Z^{\kappa}v(\tau)\|^{\frac{1}{2}}_{L^2}(E^v_{\kappa}(\tau))^{\frac12}(E^d_{\kappa+1}(\tau))^{\frac12} d\tau\Big) \nonumber\\
&\cdot\exp\Big(C\int_0^t \langle \tau\rangle^{-1} \|\nabla Z^{\kappa}v(\tau)\|^{\frac{1}{2}}_{L^2}(E^d_{\kappa+1}(\tau))^{\frac12}
+\langle \tau\rangle^{-\frac32}  (E^d_{\kappa+1}(\tau))^{\frac12}
+\| \nabla Z^{\kappa} v(\tau)\|^2_{L^2} d\tau\Big) \nonumber\\
&\leq (\epsilon+C (C_0\epsilon)^{\frac32}  )\exp ( CC_0\epsilon+CC_0^{\frac12}\epsilon^{\frac12} ).
\end{align*}
As $c_0>8$, we can choose $\epsilon$ small enough such that
\begin{equation}\label{condition3}
(\epsilon+C (C_0\epsilon)^{\frac32}  )\exp( CC_0\epsilon+CC_0^{\frac12}\epsilon^{\frac12} )\leq \frac14C_0\epsilon,
\end{equation}
then we have $E^d_{\kappa-1}(t) \leq\frac14C_0\epsilon$.

Therefore, if we choose appropriate $C_0$ and small $\epsilon$ such that \eqref{condition1}, \eqref{condition2} and
\eqref{condition3} holds, then a better estimates \eqref{pri2} can be obtained.
Thus the theorem is proved.
\end{proof}

\subsection{Preliminary Weighted Estimates}

In this section, we list a few weighted estimates, which will be frequently used through out this paper.

First, we give two weighted $L^\infty-L^2$ estimates of the unknown near the light cone. They are essentially due to Klainerman and Sideris \cite{KlainermanSiderisS96}.
\begin{lem}\label{lemmaK-S-3D-1}
Let $u\in H^2(\mathbb{R}^3)$, then there hold
\begin{align}
\langle r\rangle^{1/2}|u(x)| & \lesssim\sum_{|\alpha|\leq 1}\|\nabla \widetilde{\Omega}^\alpha u\|_{L^2},
\label{K-S-3D-1}\\
\langle r\rangle |u(x)| & \lesssim\sum_{|\alpha|\leq 1} \|\partial_r \widetilde{\Omega}^\alpha u\|_{L^2}^{1/2}\cdot
\sum_{|\alpha|\leq 2} \|\widetilde{\Omega}^\alpha u\|_{L^2}^{1/2}, \label{K-S-3D-2}
\end{align}
provided the right hand side is finite.
\end{lem}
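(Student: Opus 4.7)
The strategy is to combine a two-dimensional Sobolev embedding on the unit sphere $\mathbb{S}^2$ (which reduces pointwise control to an $L^2(\mathbb{S}^2)$ estimate at the cost of tangential derivatives, encoded by $\widetilde{\Omega}=\Omega$ on scalars) with a weighted radial fundamental theorem of calculus, exploiting the polar decomposition $\mathbb{R}^3\simeq(0,\infty)_r\times\mathbb{S}^2_\omega$ with volume element $r^2\,dr\,d\omega$. The identity $|\nabla f|^2=|\partial_r f|^2+r^{-2}|\Omega f|^2$ ensures that both $\|\partial_r\widetilde{\Omega}^\alpha u\|_{L^2(\mathbb{R}^3)}$ and the weighted tangential norm $\bigl(\int_0^\infty\|\widetilde{\Omega}^{\alpha+1}u(s,\cdot)\|_{L^2(\mathbb{S}^2)}^2\,ds\bigr)^{1/2}$ are dominated by $\|\nabla\widetilde{\Omega}^\alpha u\|_{L^2}$, which is exactly what is needed to recast the right-hand sides in the stated form.

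For \eqref{K-S-3D-1}, I would begin with the spherical Sobolev bound $|u(r\omega)|^2\lesssim\sum_{|\beta|\le 2}\|\Omega^\beta u(r,\cdot)\|_{L^2(\mathbb{S}^2)}^2$ and rewrite, for each $\beta$,
\[
r\,\|\Omega^\beta u(r,\cdot)\|_{L^2(\mathbb{S}^2)}^2=-\int_r^\infty\partial_s\bigl(s\|\Omega^\beta u(s,\cdot)\|_{L^2(\mathbb{S}^2)}^2\bigr)\,ds.
\]
Expanding the derivative, discarding the nonpositive $\|\Omega^\beta u\|_{L^2(\mathbb{S}^2)}^2$ contribution, and applying Cauchy--Schwarz first in $\sigma$ and then in $s$ yields
\[
r\,\|\Omega^\beta u(r,\cdot)\|_{L^2(\mathbb{S}^2)}^2\lesssim\Bigl(\int_0^\infty\|\Omega^\beta u(s,\cdot)\|_{L^2(\mathbb{S}^2)}^2\,ds\Bigr)^{1/2}\|\partial_r\Omega^\beta u\|_{L^2(\mathbb{R}^3)}.
\]
For $|\beta|\ge 1$ the first factor is controlled by $\|\nabla\Omega^{\beta-1}u\|_{L^2}$ via the tangential part of $\nabla$; the zero mode $|\beta|=0$ I would handle by splitting $u=\bar u(r)+(u-\bar u)$, where the oscillation $u-\bar u$ has vanishing spherical mean and falls into the Poincar\'e--Sobolev regime $|\beta|\ge 1$, while the radial average obeys $r^{1/2}|\bar u(r)|\lesssim\|\partial_r u\|_{L^2(\mathbb{R}^3)}$ by a one-dimensional fundamental theorem argument. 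The small-$r$ regime, where $\langle r\rangle^{1/2}\sim 1$, is absorbed by the Gagliardo--Nirenberg bound $\|u\|_{L^\infty(\mathbb{R}^3)}^2\lesssim\|\nabla u\|_{L^2}\|\nabla^2 u\|_{L^2}$ applied locally.

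For \eqref{K-S-3D-2}, the same sphere-Sobolev step reduces matters to bounding $r^2\|\Omega^\beta u(r,\cdot)\|_{L^2(\mathbb{S}^2)}^2$ for $|\beta|\le 2$. This time I would integrate outward from $0$ with the heavier weight,
\[
r^2\|\Omega^\beta u(r,\cdot)\|_{L^2(\mathbb{S}^2)}^2=\int_0^r\bigl(2s\|\Omega^\beta u\|_{L^2(\mathbb{S}^2)}^2+2s^2\langle\Omega^\beta u,\partial_s\Omega^\beta u\rangle_{L^2(\mathbb{S}^2)}\bigr)\,ds.
\]
Cauchy--Schwarz in $\sigma$ and $s$ converts the second term into the product $2\|\Omega^\beta u\|_{L^2(\mathbb{R}^3)}\|\partial_r\Omega^\beta u\|_{L^2(\mathbb{R}^3)}$, which is precisely the Gagliardo--Nirenberg shape of the claim; Hardy's inequality converts the first term into $2\int|\Omega^\beta u|^2/|x|\,dx\lesssim\|\Omega^\beta u\|_{L^2}\|\nabla\Omega^\beta u\|_{L^2}$, of the same product shape. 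Summing over $|\beta|\le 2$ and taking square roots gives the claim with $|\alpha|\le 1$ under $\partial_r\widetilde{\Omega}^\alpha$ and $|\alpha|\le 2$ on $\widetilde{\Omega}^\alpha u$.

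The main obstacle is the bookkeeping of angular versus radial derivatives, and in particular reaching the sharp index $|\alpha|\le 1$ in \eqref{K-S-3D-1}: a direct application of $H^2(\mathbb{S}^2)\hookrightarrow L^\infty(\mathbb{S}^2)$ would produce only $|\alpha|\le 2$, and saving one derivative requires the spherical zero-mode decomposition above (or, equivalently, an interpolated sphere embedding combined with Hardy). Everything else is Cauchy--Schwarz, Hardy, and the fundamental theorem of calculus applied in weighted form.
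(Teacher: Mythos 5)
The paper itself does not prove this lemma: it simply cites Lemma 4.2 of \cite{KlainermanSiderisS96} for \eqref{K-S-3D-1} and Lemma 3.3 of \cite{Sideris00} for \eqref{K-S-3D-2}, so your self-contained argument is necessarily a different route, and it has to be judged on its own. Unfortunately the derivative bookkeeping, which you correctly single out as the main obstacle, does not close. In your argument for \eqref{K-S-3D-1}, passing to $L^\infty$ in $\omega$ costs the full embedding $H^2(\mathbb{S}^2)\hookrightarrow L^\infty(\mathbb{S}^2)$, so the radial fundamental-theorem/Cauchy--Schwarz step must be run on $\|\Omega^\beta u(r,\cdot)\|_{L^2(\mathbb{S}^2)}^2$ for all $|\beta|\le 2$, and the second Cauchy--Schwarz factor it produces is $\|\partial_r\Omega^{\beta}u\|_{L^2(\mathbb{R}^3)}$. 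For $|\beta|=2$ this is the third-order quantity $\partial_r\Omega^2 u=\Omega\,\partial_r\Omega u$, which is \emph{not} dominated by $\sum_{|\alpha|\le1}\|\nabla\widetilde{\Omega}^\alpha u\|_{L^2}$ (the rotation fields have unbounded coefficients, so $\|\Omega g\|_{L^2}\not\lesssim\|g\|_{L^2}$). You repair only the first factor (via the tangential part of $\nabla$) and the $|\beta|=0$ mode; the zero-mode decomposition cannot recover the missing derivative, because $H^1(\mathbb{S}^2)\not\hookrightarrow L^\infty(\mathbb{S}^2)$: even for the mean-free part you still need two angular derivatives to control the supremum in $\omega$, and after Cauchy--Schwarz one of them ends up under $\partial_r$. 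The same miscount occurs in your proof of \eqref{K-S-3D-2}: summing over $|\beta|\le 2$ yields the factor $\sum_{|\alpha|\le 2}\|\partial_r\widetilde{\Omega}^\alpha u\|^{1/2}$, not the claimed $|\alpha|\le 1$, and in addition your Hardy step applied with $|\beta|=2$ produces $\|\nabla\Omega^2u\|_{L^2}$, again outside the allowed budget. Obtaining the asymmetric count, radial derivative on at most one rotation against two rotations with no derivative, is precisely the nontrivial content of the cited lemmas, and it requires a genuinely different distribution of the angular derivatives (e.g.\ an interpolated sphere embedding or the specific arguments of \cite{KlainermanSiderisS96,Sideris00}), not the symmetric Cauchy--Schwarz you use.

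A second, independent problem is your treatment of the region $r\lesssim 1$ in \eqref{K-S-3D-1}: the Gagliardo--Nirenberg bound $\|u\|_{L^\infty}^2\lesssim\|\nabla u\|_{L^2}\|\nabla^2u\|_{L^2}$ invokes $\|\nabla^2u\|_{L^2}$, which is not controlled by $\sum_{|\alpha|\le1}\|\nabla\widetilde{\Omega}^\alpha u\|_{L^2}$ (for radial $u$ the latter collapses to $\|\nabla u\|_{L^2}$, as in the rescaled bump $u(x)=\phi(x/\epsilon)$, whose right-hand side is $O(\epsilon^{1/2})$ while $|u(0)|=1$). So this step does not follow from the stated hypotheses, and no local Sobolev argument can replace it using only the stated right-hand side; the cited results are effectively exterior/homogeneous-weight statements, and indeed the paper only ever applies the lemma in the region $r\ge\langle t\rangle/2$, where this issue is invisible. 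If you want a complete proof, you should either prove the homogeneous-weight versions (weights $r^{1/2}$ and $r$) following \cite{KlainermanSiderisS96} and \cite{Sideris00}, or add to the right-hand side the norms needed near the origin.
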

\begin{proof}
For \eqref{K-S-3D-1}, see Lemma 4.2 in \cite{KlainermanSiderisS96}. For \eqref{K-S-3D-2}, see Lemma 3.3 in \cite{Sideris00}.
\end{proof}

Next, we present some weighted $L^\infty-L^2$ estimate away from the light cone.

\begin{lem}\label{lemmaLW}
Let $u\in H^2(\mathbb{R}^3) $, then
there hold
\begin{eqnarray}
&&\langle t \rangle \| u(t, \cdot)\|_{L^\infty(r\leq\langle t\rangle/2)}
 \lesssim \|u\|_{L^2}+\| \langle t-r\rangle \nabla u\|_{L^2}+\| \langle t-r\rangle\nabla^2 u\|_{L^2},\label{KSWeiIn1}\\[-4mm]\nonumber\\
 &&\langle t \rangle \| u(t, \cdot)\|_{L^6(r\leq\langle t\rangle/2)}
 \lesssim \|u\|_{L^2}+\| \langle t-r\rangle \nabla u\|_{L^2}, \label{KSWeiIn2} \\[-4mm]\nonumber\\
&&\langle t\rangle^{\frac12}\| u(t,\cdot)\|_{L^3(r\leq \langle t\rangle/2)}\lesssim
\| u \|^{\frac12}_{L^2(\mathbb{R}^3)}
(\|\langle r-t\rangle \nabla  u\|_{L^2(\mathbb{R}^3)}
+\| u\|_{L^2(\mathbb{R}^3)}  )^{\frac12},\label{KSWeiIn3}
\end{eqnarray}
provided the right hand side is finite.
\end{lem}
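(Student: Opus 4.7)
The plan is to reduce all three inequalities to a single cutoff argument combined with Sobolev and Gagliardo--Nirenberg embeddings on $\mathbb{R}^3$. The key geometric observation is that on the enlarged region $\{r\le 3\langle t\rangle/4\}$ one still has $\langle t-r\rangle \gtrsim \langle t\rangle$, so one factor of $\langle t\rangle^{-1}$ can always be traded for a weight $\langle t-r\rangle$ acting on a derivative.

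I would begin by fixing a radial cutoff $\chi(x)=\eta(|x|/\langle t\rangle)$ with $\eta\in C^\infty_c([0,\infty))$, $\eta\equiv 1$ on $[0,1/2]$ and $\mathrm{supp}\,\eta\subset [0,3/4]$. Then $\chi\equiv 1$ on the target region $\{r\le\langle t\rangle/2\}$, $\mathrm{supp}\,\chi\subset\{r\le 3\langle t\rangle/4\}$, $|\nabla^k\chi|\lesssim \langle t\rangle^{-k}$, and $\langle t-r\rangle\gtrsim \langle t\rangle$ throughout $\mathrm{supp}\,\chi$. With this in hand each estimate becomes a routine Leibniz expansion.

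For \eqref{KSWeiIn2}, apply the 3D Sobolev embedding $\dot H^1(\mathbb{R}^3)\hookrightarrow L^6(\mathbb{R}^3)$ to $\chi u$ and expand via Leibniz:
\[
\|u\|_{L^6(r\le\langle t\rangle/2)}\le\|\chi u\|_{L^6}\lesssim \|\chi\,\nabla u\|_{L^2}+\|u\,\nabla\chi\|_{L^2}.
\]
On $\mathrm{supp}\,\chi$ one bounds $\|\chi\nabla u\|_{L^2}\lesssim \langle t\rangle^{-1}\|\langle t-r\rangle\nabla u\|_{L^2}$, while $\|u\nabla\chi\|_{L^2}\lesssim \langle t\rangle^{-1}\|u\|_{L^2}$, giving the claim. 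For \eqref{KSWeiIn3}, Hölder yields $\|\chi u\|_{L^3}\le\|\chi u\|_{L^2}^{1/2}\|\chi u\|_{L^6}^{1/2}$; combining the trivial bound $\|\chi u\|_{L^2}\le\|u\|_{L^2}$ with \eqref{KSWeiIn2} closes this step.

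For \eqref{KSWeiIn1}, which is the one requiring a little more care, I would use the 3D Gagliardo--Nirenberg inequality $\|\phi\|_{L^\infty}\lesssim \|\nabla\phi\|_{L^2}^{1/2}\|\nabla^2\phi\|_{L^2}^{1/2}$ applied to $\chi u$. Expanding $\nabla(\chi u)$ and $\nabla^2(\chi u)$ by Leibniz, each resulting term either carries $\chi$ (so we trade $\langle t-r\rangle/\langle t\rangle$ for one power of $\langle t\rangle^{-1}$) or a derivative of $\chi$ (already producing $\langle t\rangle^{-k}$), whence
\[
\|\nabla(\chi u)\|_{L^2}\lesssim \langle t\rangle^{-1}\bigl(\|u\|_{L^2}+\|\langle t-r\rangle\nabla u\|_{L^2}\bigr),
\]
\[
\|\nabla^2(\chi u)\|_{L^2}\lesssim \langle t\rangle^{-1}\bigl(\|u\|_{L^2}+\|\langle t-r\rangle\nabla u\|_{L^2}+\|\langle t-r\rangle\nabla^2 u\|_{L^2}\bigr).
\]
Multiplying the square roots and absorbing the geometric mean by AM--GM delivers \eqref{KSWeiIn1}. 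The only genuine bookkeeping is making sure that every term coming out of Leibniz gains exactly one factor of $\langle t\rangle^{-1}$; there is no hard analytic step, the main point being the cutoff geometry that matches the $\langle t-r\rangle$ weights to $\langle t\rangle$.
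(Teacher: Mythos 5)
Your proposal is correct and follows essentially the same route as the paper: a radial cutoff at scale $\langle t\rangle$ (the paper uses support in $r\le\tfrac23\langle t\rangle$, you use $r\le\tfrac34\langle t\rangle$, which is immaterial), the observation that $\langle t-r\rangle\gtrsim\langle t\rangle$ on the support, and the Gagliardo--Nirenberg/Sobolev inequalities $\|\phi\|_{L^\infty}\lesssim\|\nabla\phi\|_{L^2}^{1/2}\|\nabla^2\phi\|_{L^2}^{1/2}$, $\|\phi\|_{L^6}\lesssim\|\nabla\phi\|_{L^2}$ and the $L^2$--$L^6$ interpolation for $L^3$ applied to the cut-off function. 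The paper writes out only the third inequality in detail and remarks the other two are analogous; your Leibniz bookkeeping for \eqref{KSWeiIn1} and \eqref{KSWeiIn2} fills in exactly those analogous steps.
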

The first inequality \eqref{KSWeiIn1} comes from  \cite{LW} of Lemma 4.3.
\begin{remark}
This lemma depends on the fact that the spatial dimension is three or higher. In the two dimensional case, the conclusion would {be weaker.}
\end{remark}

\begin{proof}
These three inequalities follow from the following Sobolev imbedding respectively: $\| u \|_{L^\infty(\mathbb{R}^3)}\lesssim \| \nabla u\|^\frac12_{L^2(\mathbb{R}^3)}\| \nabla^2 u\|^\frac12_{L^2(\mathbb{R}^3)}$,
$\| u \|_{L^6(\mathbb{R}^3)}\lesssim \| \nabla u\|_{L^2(\mathbb{R}^3)}$, and $\|u\|_{L^3(\mathbb{R}^3) }\lesssim \|u\|^{\frac12}_{L^2(\mathbb{R}^3)} \|\nabla u\|^{\frac12}_{L^2(\mathbb{R}^3) }$, providing $u\in{L^2(\mathbb{R}^3)}$. Due to the similarity of these inequalities, we only present a detailed proof for the third one.

Choose a radial cut-off function $\phi\in C^\infty(\mathbb{R}^3)$ which satisfies
\begin{equation}\nonumber
\phi(x) =
\begin{cases}
1,\quad {\rm if}\ r \leq \frac{1}{2}\\
0, \quad {\rm if}\ r \geq \frac{2}{3}
\end{cases},
\quad |\nabla\phi| \lesssim 1.
\end{equation}
For each fixed $t \geq 1$, let $\phi^t(x) = \phi(x/\langle
t\rangle)$. Clearly, one has
$$\phi^t(x) \equiv 1\ \ {\rm for}\ r \leq
\frac{\langle t \rangle}{2},\quad \phi^t(x) \equiv 0\ \ {\rm
for}\ r \geq\frac{2\langle t \rangle}{3}$$ and
$$|\nabla\phi^t(x)| \lesssim
\langle t\rangle^{-1}.$$
Consequently,
\begin{align*}
\| u\|_{L^3(r\leq \langle t\rangle/2)}
&\leq \|\phi^t u\|_{L^3(\mathbb{R}^3)}  \\[-4mm]\\
&\lesssim
\|\phi^t u \|^{\frac12}_{L^2(\mathbb{R}^3)}
(\|\phi^t\nabla u\|_{L^2(\mathbb{R}^3)}
+\langle t\rangle^{-1}\| u\|_{L^2(\mathbb{R}^3)}  )^{\frac12}\\
&\lesssim \langle t\rangle^{-\frac12}
\| u \|^{\frac12}_{L^2(\mathbb{R}^3)}
(\|\langle r-t\rangle \nabla  u\|_{L^2(\mathbb{R}^3)}
+\| u\|_{L^2(\mathbb{R}^3)}  )^{\frac12},
\end{align*}
which yields \eqref{KSWeiIn3}.
\end{proof}

Now we state two lemmas of weighted estimates using the structure of wave type equations, which can be found in   \cite{KlainermanSiderisS96} and
\cite{Lei16}.
We only state them without giving details of the proof.
\begin{lem}\label{WE-1}
There holds
\begin{equation}\nonumber
X^d_{2} \lesssim E^{d}_2 +
\|\langle t+r\rangle(\partial_t^2 - \Delta)d\|_{L^2},
\end{equation}
provided the right hand side is finite.
\end{lem}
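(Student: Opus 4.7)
The plan is to establish a pointwise Klainerman--Sideris identity of the form
\[
\langle r-t\rangle |\partial^2 d(t,x)| \lesssim \sum_{|\alpha|\leq 1}|\partial Z^\alpha d(t,x)| + \langle r+t\rangle |\Box d(t,x)|, \qquad Z\in\{S,\widetilde{\Omega}_1,\widetilde{\Omega}_2,\widetilde{\Omega}_3\},
\]
and then square and integrate over $\mathbb{R}^3$. The right-hand side becomes $E_2^d + \|\langle r+t\rangle \Box d\|_{L^2}^2$ because $E_2^d$ already controls $\|\partial Z^\alpha d\|_{L^2}$ for $|\alpha|\leq 1$, so this delivers the lemma (with the implicit understanding that the source term in the statement should be squared to match the homogeneity of $X^d_2$ and $E^d_2$).

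For the pointwise identity I first treat the pure time Hessian: using $S=t\partial_t+r\partial_r$ and $[\partial_t,S]=\partial_t$ I write $t\partial_t^2 d = \partial_t(Sd) - \partial_t d - r\partial_t\partial_r d$, subtract $r\partial_t^2 d$, and invoke $\partial_t^2 d = \Box d + \Delta d$ to obtain
\[
(t-r)\partial_t^2 d = \partial_t(Sd) - \partial_t d - r(\partial_r+\partial_t)\partial_t d - r\Box d - r\Delta d.
\]
The mixed Hessian $(t-r)\partial_t\partial_i d$ is handled analogously using $[\partial_i,S]=\partial_i$. For the spatial Hessian $\partial_i\partial_j d$ I use the angular decomposition $r\partial_i = x_i\partial_r + a_{ik}\widetilde{\Omega}_k$, with bounded $a_{ik}=a_{ik}(x/|x|)$, to reduce $\partial_i\partial_j$ to combinations of $\partial_r^2$, $r^{-1}\partial_r\widetilde{\Omega}$, $r^{-2}\widetilde{\Omega}^2$ and lower-order $r^{-1}\partial$ terms, and then eliminate the pure radial piece via $\partial_r^2 = \Delta - \tfrac{2}{r}\partial_r - \tfrac{1}{r^2}\Delta_\omega$ combined once more with $\Delta d = \partial_t^2 d - \Box d$. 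The reintroduced $\partial_t^2 d$ is controlled by the previous step. Assembling all pieces and estimating $\langle r-t\rangle\leq \langle r+t\rangle$ on the $\Box d$ contribution yields the claimed inequality in $\{r\geq 1\}$; on $\{r\leq 1\}$ the weight $\langle r-t\rangle$ is bounded and $\|\partial^2 d\|_{L^2(r\leq 1)}^2\lesssim E_2^d$ follows immediately, so the identity extends to all of $\mathbb{R}^3$ after squaring and integrating.

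The main obstacle is the bookkeeping for the $1/r$ and $1/r^2$ singularities produced by the angular decomposition: one must check that each such singular term is either absorbed into a $\widetilde{\Omega}$-derivative or compensated by the $(t-r)$ weight, with the residual Hardy-type quantities controlled by $E_2^d$. The reason only $S$ and $\widetilde{\Omega}$ suffice---despite the absence of Lorentz boosts---is that the substitution $\Delta = \partial_t^2-\Box$ allows one to repeatedly reroute spatial second derivatives into time derivatives, which are then handled by $S$; this is precisely the Klainerman--Sideris trick. Beyond this algebra the estimate is routine, and the lemma follows.
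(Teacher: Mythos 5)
The paper itself disposes of this lemma by citing Lemma 2.3 and Lemma 3.1 of Klainerman--Sideris, so your attempt is a reconstruction of that argument; unfortunately the pointwise inequality you take as your target is false, and this is a genuine gap rather than a technicality. The purely spatial Hessian cannot be bounded pointwise in the interior region $r\ll t$: after the decomposition $r\partial_i=x_i\partial_r+a_{ik}\widetilde{\Omega}_k$, the singular pieces occur with the uncompensated factor $\langle r-t\rangle/r\sim t/r$, and they are not absorbed by $\sum_{|\alpha|\le 1}|\partial Z^\alpha d|+\langle t+r\rangle|(\partial_t^2-\Delta)d|$. Concretely, take $d$ time-independent, supported in $1\le |x|\le 2$, with $\Delta d(x_0)=0$ but $\nabla^2 d(x_0)\neq 0$: the left side $\langle t-r_0\rangle|\nabla^2 d(x_0)|$ grows linearly in $t$ while the right side stays bounded. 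Your remedy does not repair this: the bad region is all of $\{r\le t/2\}$, not $\{r\le 1\}$, and even on $\{r\le 1\}$ the weight $\langle r-t\rangle\sim\langle t\rangle$ is unbounded, so $\|\partial^2 d\|_{L^2(r\le 1)}^2\lesssim E^d_2$ does not control the weighted norm there. This is precisely why the Klainerman--Sideris result is split in two: the pointwise estimate is proved only for second derivatives containing $\partial_t$, and the spatial Hessian is recovered \emph{in $L^2$} by the weighted elliptic estimate $\|\langle t-r\rangle\nabla^2 d\|_{L^2}\lesssim\|\langle t-r\rangle\Delta d\|_{L^2}+\|\nabla d\|_{L^2}$ (integrate by parts; the commutator only involves $|\nabla\langle t-r\rangle|\le 1$), combined with $\Delta d=\partial_t^2 d-(\partial_t^2-\Delta)d$. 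Some $L^2$ step of this kind is unavoidable, which is also why the lemma is stated in $L^2$ rather than pointwise.

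There is a second, more local problem in your treatment of the time Hessian: the displayed identity is not an identity. Substituting $\partial_t(Sd)-\partial_t d=t\partial_t^2 d+r\partial_r\partial_t d$ shows your right-hand side equals $(t-2r)\partial_t^2 d$; you have counted $-r\partial_t^2 d$ twice, once inside $-r(\partial_r+\partial_t)\partial_t d$ and once as $-r(\partial_t^2-\Delta)d-r\Delta d$. Even after correcting this, the leftover term $r(\partial_t+\partial_r)\partial_t d$ carries a bare factor $r$ and is not majorized by the admissible right-hand side. The standard manipulation instead multiplies through by $t+r$: from $tS\partial_t d-rS\partial_r d=t^2\partial_t^2 d-r^2\partial_r^2 d$ and $\partial_r^2=\Delta-\tfrac{2}{r}\partial_r-\tfrac{1}{r^2}\sum_i\widetilde{\Omega}_i^2$ one obtains
\begin{equation*}
(t^2-r^2)\,\partial_t^2 d \;=\; tS\partial_t d - rS\partial_r d - r^2(\partial_t^2-\Delta)d - 2r\partial_r d - \sum_i\widetilde{\Omega}_i^2 d,
\end{equation*}
and after dividing by $t+r$ every term is of the allowed form (using $[S,\partial]=-\partial$ and $|\widetilde{\Omega}_i v|\le r|\nabla v|$). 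With that identity for the $\partial_t\partial$-part of the Hessian and the weighted elliptic estimate for $\nabla^2 d$, your square-and-integrate conclusion (with the source term squared, as you note) goes through; as written, however, both the pointwise target and the key algebraic identity need to be replaced.
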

\begin{proof}
See Lemma 2.3 and Lemma 3.1 in \cite{KlainermanSiderisS96}.
\end{proof}

Near the light cone, the good unknown $(\partial_t+\partial_r)d$ has better decay. The following lemma comes from \cite{Lei16} where the two dimension case was proved.
Indeed, it holds for all dimension $n\geq 2$.
\begin{lem}\label{GoodDeri}
For $\frac{\langle t\rangle}{2} \leq r$, there holds
\begin{equation}\nonumber
\langle t\rangle| (\partial_t + \partial_r) \partial d| \lesssim
|\nabla d| + |\nabla Z d| + t|(\partial_t^2 - \Delta) d)|.
\end{equation}
\end{lem}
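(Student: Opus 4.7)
The plan is to convert the good derivative $L := \partial_t + \partial_r$ into a combination of the scaling operator $S$ and ``bad'' derivatives weighted by $(t-r)$, using the elementary identity $tL = S + (t-r)\partial_r$, immediate from $S = t\partial_t + r\partial_r$. For bounded $t$ the estimate is trivial since $\langle t\rangle \sim 1$ and $|L\partial d|\lesssim |\partial^2 d|\lesssim |\nabla Zd|$, so I may assume $t\geq 1$ and $\langle t\rangle \sim t$. Applying the identity to $\partial d$ (for any $\partial\in\{\partial_t,\partial_1,\partial_2,\partial_3\}$) yields
\[
tL\partial d = S\partial d + (t-r)\partial_r\partial d,
\]
so it suffices to bound the two terms on the right by $|\nabla d|+|\nabla Zd|+t|(\partial_t^2-\Delta)d|$.

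For $|S\partial d|$, I would invoke \eqref{commuS}, i.e.\ $S\partial = \partial S-\partial$, giving $|S\partial d|\leq |\partial(Sd)|+|\partial d|\lesssim |\nabla Zd|+|\nabla d|$ since $S\in Z$. For the harder term I would apply the companion identity $r\partial_r = S - t\partial_t$ to $\partial d$, obtaining
\[
(t-r)\partial_r\partial d = \frac{t-r}{r}\bigl(S\partial d - t\partial_t\partial d\bigr),
\]
in which the prefactor $|t-r|/r$ is bounded by a constant in the region $r\geq\langle t\rangle/2$. The problem thus reduces to controlling $t|\partial_t\partial d|$ by $|\nabla d|+|\nabla Zd|+t|(\partial_t^2-\Delta)d|$.

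When $\partial=\partial_t$, I would substitute the wave equation $\partial_t^2 d = (\partial_t^2-\Delta)d + \Delta d$, placing $t|(\partial_t^2-\Delta)d|$ directly on the right and leaving $t|\Delta d|$ to handle. Expanding $\Delta$ in polar coordinates, $\Delta = \partial_r^2 + (2/r)\partial_r + (1/r^2)\Delta_\omega$, the $(2t/r)|\partial_r d|$ piece is bounded by $|\nabla d|$ (since $t/r\lesssim 1$), the angular term $(t/r^2)|\Delta_\omega d|$ by $|\nabla Zd|$ via the schematic bound $|\Delta_\omega d|\lesssim r|\nabla\Omega d|$ together with $t/r\lesssim 1$, and the remaining $t|\partial_r^2 d|$ is rewritten via $r^2\partial_r^2 = (S-t\partial_t)^2-(S-t\partial_t)$ followed by another pass of the wave equation. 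When $\partial=\partial_i$ is spatial, I would decompose $\partial_i d$ into its radial part $\hat{x}_i\partial_r d$ and a tangential remainder of size $r^{-1}|\Omega d|$, reducing matters to the radial case plus tangential terms carrying an extra factor $1/r$, which is harmless against a $t$ factor since $t/r\lesssim 1$.

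The main obstacle will be closing the $t|\partial_r^2 d|$ step: the naive wave-equation substitution reintroduces $t^2\Delta d$ on the right, and one must rearrange the resulting identity---essentially the Klainerman-Sideris pointwise inequality---so that $\Delta d$ is expressed as a bounded linear combination of $S$- and $\partial_t$-derivatives of $d$ together with the source $(\partial_t^2-\Delta)d$. The region constraint $r\geq\langle t\rangle/2$, which supplies both $|t-r|/r\lesssim 1$ and $t/r\lesssim 1$, is essential for every weight in the scheme to remain effectively bounded.
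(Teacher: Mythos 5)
Your first identity $tL\partial d=S\partial d+(t-r)\partial_r\partial d$ and the treatment of $S\partial d$ are fine, but the third step is a genuine gap, and one that cannot be repaired along the route you describe. Writing $(t-r)\partial_r\partial d=\frac{t-r}{r}\bigl(S\partial d-t\,\partial_t\partial d\bigr)$ and then bounding the prefactor $|t-r|/r$ by a constant discards exactly the null-cone smallness that the lemma encodes, and the ``reduced'' statement you are left with, namely $t|\partial_t\partial d|\lesssim |\nabla d|+|\nabla Zd|+t|(\partial_t^2-\Delta)d|$, is false. Test it on the outgoing free wave $d=\phi(r-t)/r$ at a point with $r\sim t$ large and $|r-t|\lesssim 1$: there $(\partial_t^2-\Delta)d=0$, $t|\partial_t\partial_r d|\sim|\phi''|$, while $|\nabla d|\sim|\phi'|/r$, $|\nabla\partial d|\sim|\phi''|/r$, $|\nabla Sd|\sim\bigl(|r-t||\phi''|+|\phi'|\bigr)/r$ and $\widetilde\Omega d=0$, so the right-hand side is smaller by a full factor of $t$. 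The same defect propagates through the rest of your scheme: substituting $\partial_t^2 d=(\partial_t^2-\Delta)d+\Delta d$, or $r^2\partial_r^2=(S-t\partial_t)^2-(S-t\partial_t)$, keeps regenerating $t$ times an unweighted second derivative, and you concede at the end that the closing step is ``essentially the Klainerman--Sideris pointwise inequality'' --- but that inequality \emph{is} the content of the lemma, so the essential work is left undone. (For comparison, the paper does not reprove the lemma: it cites Lemma 3.4 of \cite{Lei16} and merely observes that the argument there extends from an annulus in $n=2$ to $r\ge\langle t\rangle/2$ and $n\ge2$.)

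The repair is to keep the factor $t-r$ attached to the \emph{bad} derivative $\underline L=\partial_t-\partial_r$ instead of trading it for $t\partial_t$. Decompose $\partial$ into $L=\partial_t+\partial_r$, $\underline L$ and angular parts (the angular ones are handled, as you indicate, by $\nabla=\frac{x}{r}\partial_r-\frac{\omega}{r}\wedge\Omega$ together with $t/r\le2$). For the component $t|L\underline Ld|$ use $\underline L L=\partial_t^2-\partial_r^2=(\partial_t^2-\Delta)+\frac{2}{r}\partial_r+\frac{1}{r^2}\Delta_\omega$ directly. For $t|LLd|$ use the operator identity $2S=(t+r)L+(t-r)\underline L$ applied to $Ld$, i.e. $(t+r)LLd=2SLd-(t-r)\underline LLd$; then $t|LLd|\le\frac{2t}{t+r}|SLd|+\frac{t|t-r|}{t+r}\bigl(|(\partial_t^2-\Delta)d|+\frac{2}{r}|\partial_r d|+\frac{1}{r^2}|\Delta_\omega d|\bigr)$, and since in the region $r\ge\langle t\rangle/2$ one has $\frac{|t-r|}{t+r}\le1$ and $\frac{t}{r}\le2$, every coefficient is admissible: the first term is $\lesssim|\partial Sd|+|\partial d|$ by $[S,L]=-L$, the second is $\le t|(\partial_t^2-\Delta)d|$, and the last two are $\lesssim|\nabla d|+|\nabla\widetilde\Omega d|$ using $|\Delta_\omega d|\lesssim\sum|\Omega\Omega d|\le r\sum|\nabla\Omega d|$. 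This is the point your proposal misses: the decisive step is converting $(t-r)\underline LLd$ through the equation, not bounding $t\partial_t\partial d$ pointwise.
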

\begin{proof}
See Lemma 3.4 in \cite{Lei16}. The proof given in \cite{Lei16} is for
$\frac{\langle t\rangle}{2} \leq r\leq \frac{5\langle t\rangle}{2}$ with space dimension $n=2$. However, one can easily check that the proof is valid for $r\geq \frac{\langle t\rangle}{2}$ and $n\geq 2$.
\end{proof}

\section{Estimates of the weighted $L^2$ norm}
This section is devoted to the estimate of weighted $L^2$ norm $X_{\kappa}$. To this end,
we need to estimate the  $L^2$ norm of $f^2_a$ with some weights.
\begin{lem} \label{lemWeNo}
For all multi-index $a$,  there holds
\begin{align*}
&\| (t+r)f_{a}^2\|^2_{L^2} \nonumber\\
&\lesssim E^d_{|a|+1}(X^d_{[|a|/2]+3}+E^d_{[|a|/2]+3})(E^d_{[|a|/2]+3}+1) \nonumber\\
&\quad+ E^v_{|a|+1} (E^v_{[|a|/2]+3}+1) (E^d_{[|a|/2]+4}+X^d_{[|a|/2]+4}) \nonumber\\
&\quad + (E^d_{|a|+2}+X^d_{|a|+2})E^v_{[|a|/2]+3}(E^v_{[|a|/2]+3}+1) \nonumber\\
&\quad+(E^v_{|a|}E^d_{[|a|/2]+3}+E^v_{[|a|/2]+2}E^d_{|a|+1})  (E^v_{[|a|/2]+3}+1)(E^d_{[|a|/2]+3}+X^d_{[|a|/2]+3}) (E^d_{[|a|/2]+3}+1),
\end{align*}
provided the right hand side is finite.
\end{lem}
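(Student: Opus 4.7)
The plan is to estimate each of the five sums in the definition of $f^2_a$ in \eqref{f1f2} separately, using a highest-order/low-order dichotomy: in every product, the single factor carrying more than $[|a|/2]+1$ derivatives is placed in $L^2$, while the remaining factors are placed in $L^\infty$ through the weighted Sobolev-type estimates of Lemmas \ref{lemmaK-S-3D-1} and \ref{lemmaLW}. The weight $(t+r)$ is distributed to one of the low-order factors after splitting $\mathbb{R}^3$ into the interior region $\{r\le\langle t\rangle/2\}$, where $t+r\sim\langle t\rangle$, and the exterior region $\{r\ge\langle t\rangle/2\}$, where $t+r\sim\langle r\rangle$.

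Concretely, for the semilinear Term 1, $(\nabla Z^bd\cdot\nabla Z^cd-\partial_tZ^bd\cdot\partial_tZ^cd)Z^ed$ with $b+c+e=a$, the pigeonhole principle forces one of $|b|,|c|,|e|$ to dominate. If $|b|$ is the largest I keep $\|\nabla Z^bd\|_{L^2}\sim(E^d_{|a|+1})^{1/2}$, bound $\|\nabla Z^cd\|_{L^\infty}$ by \eqref{K-S-3D-1} (producing $(E^d_{[|a|/2]+3})^{1/2}$), and absorb the weight $(t+r)$ into $Z^ed$ via \eqref{K-S-3D-2} in the exterior and via \eqref{KSWeiIn1} in the interior; the latter introduces the weighted norm $\|\langle t-r\rangle\nabla^2 Z^ed\|_{L^2}$, which is precisely the source of the $X^d_{[|a|/2]+3}$ contribution in the first line of the stated bound. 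The $(E^d_{[|a|/2]+3}+1)$ factor comes from the need to allow $|e|=0$, so that $Z^ed$ contains the unperturbed director $e\in\mathbb{S}^2$.

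Terms 2, 3, and 5 are handled by the same scheme but now some of the low-order factors are components of the velocity $v$; bounding those in $L^\infty$ through \eqref{K-S-3D-1} yields $(E^v_{[|a|/2]+3})^{1/2}$, whence the mixed contributions in the second and fourth lines of the estimate. Term 4, $Z^bv\cdot\nabla(Z^cv\cdot\nabla Z^ed)$, is the delicate case: after expanding, it produces a factor $\partial^2 Z^ed$. When this is the highest-order factor, the weight $(t+r)$ must be carried against $\partial^2 Z^ed$ itself; in the interior one has $t+r\sim\langle t-r\rangle$ so $\|(t+r)\chi_{\mathrm{int}}\partial^2 Z^ed\|_{L^2}\lesssim(X^d_{|a|+2})^{1/2}$, while in the exterior $t+r\sim\langle r\rangle$ and one uses \eqref{K-S-3D-2} on the low-order velocity factor to trade $\langle r\rangle$ for $(E^v_{[|a|/2]+3})^{1/2}$, simultaneously leaving $\|\partial^2 Z^ed\|_{L^2}\lesssim(E^d_{|a|+2})^{1/2}$. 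This accounts for the $(E^d_{|a|+2}+X^d_{|a|+2})$ factor in the third line.

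The main obstacle will be the index-bookkeeping rather than any analytic subtlety: for each multi-index splitting $b+c+e=a$, $b+c+e+f=a$, $b+c+e+f+g=a$, or $b+c=a$ appearing in the five sums of \eqref{f1f2}, I must verify that after choosing which factor goes into $L^2$, the remaining indices all fit under the threshold $[|a|/2]+3$ (or $[|a|/2]+4$ when one further rotation is needed to promote $\nabla Z^{|a|/2+1}d$ to $L^\infty$). Once this budgeting is carried out for every term, Hölder's inequality assembles each contribution into a product of one highest-order $L^2$ norm and low-order weighted-energy factors, and summing over the five terms produces exactly the four lines of the announced bound.
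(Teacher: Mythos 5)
Your overall scheme is the same as the paper's: expand $f^2_a$ into its groups from \eqref{f1f2}, split $\mathbb{R}^3$ into $\{r\le\langle t\rangle/2\}$ and $\{r\ge\langle t\rangle/2\}$, put the one high-order factor in $L^2$, and send the low-order factors to weighted $L^\infty$ (or $L^3$, $L^6$) through Lemmas \ref{lemmaK-S-3D-1} and \ref{lemmaLW}, with the weighted norm $X^d$ appearing whenever the weight lands on a second derivative of $d$ in the interior region. Your treatment of the velocity-bearing and quasilinear terms (weight on $rv$ in the exterior via \eqref{K-S-3D-2}, weight on $\partial^2 Z d$ in the interior via $t+r\lesssim\langle t-r\rangle$ there) is exactly what the paper does for its $H_2$ and $H_4$.

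There is, however, a genuine gap in your allocation of the weight for the semilinear term $(\nabla Z^b d\cdot\nabla Z^c d-\partial_t Z^b d\cdot\partial_t Z^c d)Z^e d$: you propose to absorb $(t+r)$ into the undifferentiated factor $Z^e d$, via \eqref{K-S-3D-2} outside and \eqref{KSWeiIn1} inside. Neither is available. The energies control only $\partial Z^a d$ in $L^2$ (plus $Z^a d$ in $L^\infty$ through $|d|=1$ and in $L^6$ through Sobolev when $|a|\ge 1$); $Z^e d$ itself is never in $L^2$ --- for $e=0$ it is the director $d$, which tends to the constant $e\in\mathbb{S}^2$ --- so the right-hand sides of \eqref{K-S-3D-2} (which requires $\|\widetilde{\Omega}^\alpha Z^e d\|_{L^2}$) and of \eqref{KSWeiIn1} (which requires $\|Z^e d\|_{L^2}$) are not controlled, and indeed not finite. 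This clashes with your own remark that the $+1$ factors come from allowing $|e|=0$: precisely in that case the weighted norm you invoke is meaningless. The weight must instead be carried by a factor bearing at least one derivative of $d$ (or by a velocity factor, since $E^v$ does control $Z^a v$ in $L^2$): keep $\|\partial Z^{|a|}d\|_{L^2}$, put the weight on the low-order derivative factor, namely $\|r\,\partial Z^{[|a|/2]}d\|_{L^\infty(r\ge\langle t\rangle/2)}\lesssim (E^d_{[|a|/2]+3})^{1/2}$ by \eqref{K-S-3D-2} and $\|\langle t\rangle\,\partial Z^{[|a|/2]}d\|_{L^\infty(r\le\langle t\rangle/2)}\lesssim (X^d_{[|a|/2]+3}+E^d_{[|a|/2]+2})^{1/2}$ by \eqref{KSWeiIn1}, and estimate $Z^e d$ unweighted, in $L^\infty$ (bounded by $1$ plus energy, which is the true source of the $+1$) or in $L^6$ by Sobolev. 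The same constraint governs the subcase you left implicit, where $Z^e d$ carries the most derivatives: it cannot serve as the $L^2$ factor either, and must be taken in $L^6$ with $\|\nabla Z^e d\|_{L^2}\lesssim (E^d_{|a|+1})^{1/2}$, the weight and the $L^3$/$L^\infty$ norms falling on the two derivative factors. With the weight reallocated in this way, the rest of your bookkeeping goes through and reproduces the stated bound.
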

\begin{proof}
Recalling the definition of $f_a^2$ in \eqref{f1f2}, we write
\begin{align*}
&\sum_{b+c+e=a} C_a^{b,c} \|(t+r)
(\nabla Z^b d\cdot\nabla Z^c d-\partial_t Z^b d\cdot \partial_t Z^c d) Z^e d \|^2_{L^2} \\
&+\sum_{b+c=a} C_a^b \|(t+r)(2Z^b v\cdot\nabla\partial_t Z^c d)\|^2_{L^2}\\
&+\sum_{b+c=a} C_a^b \|(t+r)(\partial_t Z^b v\cdot\nabla Z^c d)\|^2_{L^2}\\
&+\sum_{b+c+e=a}C_a^{b,c}\|(t+r)Z^b v\cdot\nabla( Z^c v\cdot \nabla Z^e d)\|^2_{L^2}\\
&+\sum_{b+c+e+f+g=a}C_a^{b,c,e,f}
\|(t+r)  ( Z^b v\cdot \nabla) Z^c d \cdot ((Z^e v\cdot \nabla) Z^f d)  \big] Z^g d\|^2_{L^2}\\
&+\sum_{b+c+e+f=a} C_a^{b,c,e} \|(t+r)\big[2 (Z^b v\cdot\nabla) Z^c d\cdot\partial_t Z^e d \big] Z^f d\|^2_{L^2}\\
&=H_1+H_2+H_3+H_4+H_5+H_6.
\end{align*}
In the sequel, we will focus our mind on $H_1$, $H_2$ and $H_3$, since these terms contain quadratic terms.
The remain terms $H_4, H_5, H_6$ are all cubic or higher order ones whose estimates are similar and easier.

We first estimate $H_1$:
\begin{align}\label{WeNo1}
 H_1\lesssim
&\big \| (t+r) |\partial Z^{|a|}d| |\partial Z^{[|a|/2]}d| |Z^{[|a|/2]}d| \big\|^2_{L^2} \nonumber\\[-4mm]\nonumber\\
&+\sum_{[a/2]\leq |b| \leq |a|}\| (t+r) |\partial Z^{[|a|/2]}d|^2 |Z^b d| \big\|^2_{L^2}.
\end{align}
If $r\geq \langle t\rangle/2$, by \eqref{K-S-3D-2}, the right hand side of \eqref{WeNo1} can be controlled by
\begin{align*}
& \|  \partial Z^{|a|}d\|^2_{L^2} \|r\partial Z^{[|a|/2]}d\|^2_{L^\infty(r\geq\langle t\rangle/2)}
  \|Z^{[|a|/2]}d \|^2_{L^\infty} \\[-4mm]\\
&+\sum_{[a/2]\leq |b| \leq |a|}
\| r \partial Z^{[a/2]}d\|^2_{L^\infty(r\geq\langle t\rangle/2)}
\| \partial Z^{[a/2]}d\|^2_{L^3(r\geq\langle t\rangle/2)}
\|Z^b d\|^2_{L^6} \\
&\lesssim E^d_{|a|+1}E^d_{[|a|/2]+3}( 1+ E^d_{[|a|/2]+3} ).
\end{align*}
Otherwise, if $r\leq \langle t\rangle/2$, by \eqref{KSWeiIn1} and \eqref{KSWeiIn2}, the right hand side of \eqref{WeNo1} can be controlled by
\begin{align*}
& \|  \partial Z^{|a|}d\|^2_{L^2} \|\langle t\rangle\partial Z^{[|a|/2]}d\|_{L^\infty(r\leq\langle t\rangle/2)}
  \|Z^{[|a|/2]}d \|^2_{L^\infty} \\[-4mm]\\
&+\sum_{[a/2]\leq |b| \leq |a|} \| \partial Z^{[a/2]}d\|^2_{L^6}
\| \langle t\rangle \partial Z^{[a/2]}d\|^2_{L^6 (r\leq\langle t\rangle/2)}
 \|Z^b d\|^2_{L^6} \\
&\lesssim E^d_{|a|+1}(X^d_{[|a|/2]+3}+E^d_{[|a|/2]+2})(E^d_{[|a|/2]+2}+1) .
\end{align*}
Thus we conclude that
\begin{align*}
H_1\lesssim E^d_{|a|+1}(X^d_{[|a|/2]+3}+E^d_{[|a|/2]+3})(E^d_{[|a|/2]+3}+1).
\end{align*}

For $H_2$, we have:
\begin{equation}\label{WeNo2}
H_2=4\sum_{b+c=a} C_a^b\| (t+r)
Z^b v\cdot\nabla\partial_t Z^c d \|^2_{L^2} .
\end{equation}
When $r\geq \langle t\rangle/2$, by \eqref{K-S-3D-2}, we can estimate the right hand side of \eqref{WeNo2} as
\begin{align*}
&\| r Z^{[|a|/2]}v \|^2_{L^\infty(r\geq \langle t\rangle/2)} \|\nabla\partial_t Z^{|a|} d\|^2_{L^2}
+\| Z^{|a|}v \|^2_{L^2}\|r \nabla\partial_t Z^{[|a|/2]} d\|^2_{L^\infty(r\geq \langle t\rangle/2)}\\[-4mm]\\
&\lesssim E^v_{[|a|/2]+2} E^d_{|a|+2} + E^v_{|a|} E^d_{[|a|/2]+4}.
\end{align*}
When $r\leq \langle t\rangle/2$, using \eqref{KSWeiIn1}, the right hand side of \eqref{WeNo2} can be controlled by
\begin{align*}
&\|  Z^{[|a|/2]}v \|^2_{L^\infty} \|\langle t-r \rangle \nabla\partial_t Z^{|a|} d\|^2_{L^2}
+\|  Z^{|a|}v \|^2_{L^2}\|\langle t\rangle \nabla\partial_t Z^{|a|} d\|^2_{L^\infty(r\leq \langle t\rangle/2)}\\[-4mm]\\
&\lesssim E^v_{[|a|/2]+2} X^d_{|a|+2} + E^v_{|a|} X^d_{[|a|/2]+4}+E^v_{|a|} E^d_{[|a|/2]+2}.
\end{align*}
Hence we conclude that
\begin{align*}
H_2\lesssim E^v_{[|a|/2]+2} E^d_{|a|+2} + E^v_{|a|} E^d_{[|a|/2]+4}
+E^v_{[|a|/2]+2} X^d_{|a|+2} + E^v_{|a|} X^d_{[|a|/2]+4}.
\end{align*}

Then we consider
\begin{align}\label{WeNo3}
&H_3\lesssim \big\|(t+r) |\partial_t  Z^{|a|} v|   |\nabla Z^{[|a|/2]} d| \big\|^2_{L^2}
+\big \| |(t+r)\partial_t Z^{[|a|/2]}v| |\nabla Z^{|a|}d|\big\|^2_{L^2}.
\end{align}
When $r\geq \langle t\rangle/2$, similar to the estimate of \eqref{WeNo2}, the right hand side of \eqref{WeNo3} can be bounded by
\begin{equation*}
 E^v_{|a|+1}E^d_{[|a|/2]+3}
+E^v_{[|a|/2]+3} E^d_{|a|+1}.
\end{equation*}
For the case of $r\leq \langle t\rangle/2$, with the help of \eqref{KSWeiIn1} and \eqref{KSWeiIn2},
we can estimate the right hand side of \eqref{WeNo3} as
\begin{align*}
&\|\partial_t  Z^{|a|} v\|^2_{L^2}\| \langle t\rangle\nabla Z^{[|a|/2]} d\|^2_{L^\infty(r\leq \langle t\rangle/2)}
+\| \partial_t Z^{[|a|/2]}v\|_{L^3}
\|\langle t\rangle \nabla Z^{|a|}d \|^2_{L^6(r\leq \langle t\rangle/2) } \\[-4mm]\\
&\lesssim E^v_{|a|+1}( X^d_{[|a|/2]+3}+E^d_{[|a|/2]+1} )
+ E^v_{[|a|/2]+2} (X^d_{|a|+2} +E^d_{|a|+1} ) .
\end{align*}
Therefore, we have
\begin{align*}
H_3&\lesssim E^v_{|a|+1}( X^d_{[|a|/2]+3}+E^d_{[|a|/2]+3} )
+ E^v_{[|a|/2]+3} (X^d_{|a|+2} +E^d_{|a|+1} ) .
\end{align*}

For $H_4$, $H_5$ and $H_6$, along the same line, we can obtain that
\begin{align*}
&H_4\lesssim E^v_{|a|+1}E^v_{[|a|/2]+3}(E^d_{[|a|/2]+4}+X^d_{[|a|/2]+4})+(X^d_{|a|+2}+E^d_{|a|+2}) (E^v_{[|a|/2]+3})^2,\\
&H_5\lesssim (E^v_{|a|}E^d_{[|a|/2]+3}+E^v_{[|a|/2]+2}E^d_{|a|+1})  E^v_{[|a|/2]+3}(E^d_{[|a|/2]+3}+X^d_{[|a|/2]+3}) (E^d_{[|a|/2]+3}+1),\\
&H_6\lesssim (E^v_{|a|}E^d_{[|a|/2]+3}+E^v_{[|a|/2]+2}E^d_{|a|+1})(E^d_{[|a|/2]+3}+X^d_{[|a|/2]+3}) (E^d_{[|a|/2]+3}+1).
\end{align*}

Combining the estimates of $H_1$, ..., $H_6$ gives the lemma.
\end{proof}

Now we show that $X^d_{\kappa}$ can be controlled by the generalized energy under certain small energy assumptions.
\begin{lem}\label{lemmaWeightedNorm}
Suppose $\kappa\geq 9$, $E^v_\kappa\lesssim \epsilon$ and $E^d_{\kappa-1}\lesssim \epsilon$.  Then there hold that
\begin{equation*}
X^d_{\kappa-1}\lesssim E^d_{\kappa-1}, \quad
X^d_{\kappa+1}\lesssim E^d_{\kappa+1}.
\end{equation*}
\end{lem}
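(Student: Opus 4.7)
The plan is to derive both bounds from Lemma \ref{WE-1} applied not to $d$ but to $Z^a d$ for each admissible multi-index $a$. Since \eqref{LiquidHyper_GeDe} gives $(\partial_{tt}-\Delta)Z^a d = f^2_a$, applying the weighted-energy inequality of Lemma \ref{WE-1} to $Z^a d$ yields schematically
\[
\|\langle r-t\rangle\partial^2 Z^a d\|_{L^2}^2 \;\lesssim\; E^d_{|a|+2} \;+\; \|\langle t+r\rangle f^2_a\|_{L^2}^2.
\]
After summing in $|a|$, the source on the right is precisely the quantity controlled by Lemma \ref{lemWeNo}. Both conclusions of the lemma then follow from a self-absorption argument exploiting the smallness of $E^v_\kappa$ and $E^d_{\kappa-1}$.

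For $X^d_{\kappa-1}$ I sum over $|a|\leq\kappa-3$. Every Sobolev index appearing in the right-hand side of Lemma \ref{lemWeNo}, namely $|a|$, $|a|+1$, $|a|+2$, $[|a|/2]+3$ and $[|a|/2]+4$, is then at most $\kappa-1$; for the half-indices this uses only $\kappa\geq 7$. Consequently every $E^d$- and $E^v$-quantity occurring on the right is bounded by $E^d_{\kappa-1}+E^v_\kappa\lesssim\epsilon$, and every $X^d$-quantity on the right is bounded by $X^d_{\kappa-1}$ itself. Each of the six summands in Lemma \ref{lemWeNo} thus produces a contribution of the schematic form $\epsilon\,(X^d_{\kappa-1}+E^d_{\kappa-1})$, leading to
\[
X^d_{\kappa-1} \;\lesssim\; E^d_{\kappa-1} \;+\; C\epsilon\bigl(X^d_{\kappa-1}+E^d_{\kappa-1}\bigr),
\]
and absorbing $C\epsilon X^d_{\kappa-1}$ into the left for $\epsilon$ small gives the first conclusion.

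For $X^d_{\kappa+1}$ I sum over $|a|\leq\kappa-1$. Now the top-order terms reach $E^d_{|a|+2}\leq E^d_{\kappa+1}$ and $X^d_{|a|+2}\leq X^d_{\kappa+1}$, and the threshold $\kappa\geq 9$ is used in its sharpest form precisely to ensure $[(\kappa-1)/2]+4\leq\kappa-1$. This inclusion places every half-index quantity $E^d_{[|a|/2]+k}$ or $X^d_{[|a|/2]+k}$ at the already-controlled level, bounded by $E^d_{\kappa-1}+X^d_{\kappa-1}\lesssim\epsilon$ thanks to the previous step. The potentially dangerous summand is the third one in Lemma \ref{lemWeNo}, which contains $X^d_{|a|+2}$; however it is multiplied by $E^v_{[|a|/2]+3}\lesssim E^v_\kappa\lesssim\epsilon$, so the $X^d_{\kappa+1}$ piece comes with an $\epsilon$-prefactor and can be absorbed into the left-hand side. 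All remaining contributions are of the form $\epsilon\,E^d_{\kappa+1}$ or $E^d_{\kappa+1}$ times a small factor, yielding $X^d_{\kappa+1}\lesssim E^d_{\kappa+1}+C\epsilon\,X^d_{\kappa+1}$, and a final absorption completes the proof.

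The main obstacle is the bookkeeping of the half-indices $[|a|/2]+k$ appearing in Lemma \ref{lemWeNo}: one must confirm that no high-derivative $X^d$-quantity lands on the right-hand side without an $\epsilon$-prefactor, and the regularity threshold $\kappa\geq 9$ is exactly what makes this work. Once these index inclusions are verified, the rest of the argument is a routine absorption.
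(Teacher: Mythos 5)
Your proposal is correct and follows essentially the same route as the paper: apply the Klainerman--Sideris inequality (Lemma \ref{WE-1}) to $Z^a d$ with source $f^2_a$, control $\|\langle t+r\rangle f^2_a\|_{L^2}^2$ by Lemma \ref{lemWeNo}, check that the half-indices satisfy $[|a|/2]+4\le\kappa-1$ (which is exactly where $\kappa\ge 9$ enters at the top level $|a|\le\kappa-1$), and then absorb the $\epsilon$-prefactored $X^d$ terms, first at level $\kappa-1$ and then at level $\kappa+1$ using the $\kappa-1$ bound. The index bookkeeping and the two-stage absorption match the paper's argument, so nothing further is needed.
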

\begin{proof}
Let $\kappa\geq 9$,  $|a|+2\leq \kappa+1$, then one has $[a/2]+4\leq \kappa-1.$
Before proving the lemma, we first show that under the  assumption of $E^v_{\kappa}\lesssim \epsilon$, $E^d_{\kappa-1}\lesssim \epsilon$,
there holds
\begin{align}\label{WeNo}
X^d_{|a|+2}
\lesssim& E^d_{|a|+2}+E^d_{|a|+1}(X^d_{[|a|/2]+3}+E^d_{[|a|/2]+3})  \nonumber\\[-4mm] \nonumber \\
&\qquad + E^v_{|a|+1} (E^d_{[|a|/2]+4}+X^d_{[|a|/2]+4})
 + (E^d_{|a|+2}+X^d_{|a|+2})E^v_{[|a|/2]+3}.
\end{align}
Actually, by Lemma \ref{WE-1}, one has
\begin{align*}
X^d_{|a|+2}&\lesssim E^d_{|a|+2}+\sum_{|b|\leq |a|}\| (t+r)f_{b}^2\|^2_{L^2}.
\end{align*}
On the other hand, thanks to Lemma \ref{lemWeNo} and the assumption $E^v_{\kappa}\lesssim \epsilon$, $E^d_{\kappa-1}\lesssim \epsilon$, one can easily check that
\begin{align*}
\| (t+r)f_{a}^2\|^2_{L^2}
&\lesssim E^d_{|a|+1}(X^d_{[|a|/2]+3}+E^d_{[|a|/2]+3})  \nonumber\\[-4mm] \nonumber \\
&\qquad + E^v_{|a|+1} (E^d_{[|a|/2]+4}+X^d_{[|a|/2]+4})
 + (E^d_{|a|+2}+X^d_{|a|+2})E^v_{[|a|/2]+3}.
\end{align*}
Thus \eqref{WeNo} can be deduced directly.

Now we turn to the proof of the lemma.
Let $|a|+2\leq \kappa-1$, one can get by\eqref{WeNo} that
\begin{align*}
X^d_{\kappa-1}
&\lesssim E^d_{\kappa-1}+E^d_{\kappa-1}(E^d_{\kappa-1}+E^v_{\kappa-1})
+(E^d_{\kappa-1}+E^v_{\kappa-1})X^d_{\kappa-1}.
\end{align*}
Using the assumption of $E^v_{\kappa}\lesssim \epsilon$ and $E^d_{\kappa-1}\lesssim \epsilon$ yields
\begin{equation*}
X^d_{\kappa-1}\lesssim E^d_{\kappa-1}.
\end{equation*}
Furthermore, for $|a|\leq\kappa-1$, we get from \eqref{WeNo} that
\begin{align*}
X^d_{\kappa+1} &\lesssim E^d_{\kappa+1}+E^d_{\kappa+1}(E^d_{\kappa-1}+E^v_{\kappa})
+(E^d_{\kappa-1}+E^v_{\kappa-1})X^d_{\kappa+1}+(E^d_{\kappa+1}+E^v_{\kappa})X^d_{\kappa-1},
\end{align*}
from which together with the assumption implies
\begin{equation*}
X^d_{\kappa+1}\lesssim E^d_{\kappa+1}.
\end{equation*}
Thus the lemma is proved.
\end{proof}

An immediate consequence of the weighted $L^2$ norm estimate is that we can gain more decay for the good unknowns
of the orientation field.
\begin{lem}\label{lemWeiGood}
Suppose $\kappa\geq 9$, $E^v_\kappa\lesssim \epsilon$, $E^d_{\kappa-1}\lesssim \epsilon$, then there hold
\begin{align*}
&\|\langle t\rangle| (\partial_t + \partial_r) \partial Z^a d\|_{L^2}
\lesssim (E^d_{\kappa-1})^{\frac12},\quad \forall\ |a|\leq \kappa-3, \\
&\|\langle t\rangle| (\partial_t + \partial_r) \partial Z^a d\|_{L^2}
\lesssim (E^d_{\kappa+1})^{\frac12},\quad
\forall\ |a|\leq \kappa-1.
\end{align*}
\end{lem}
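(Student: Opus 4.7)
The plan is to split $\mathbb{R}^3$ into the inner region $\{r\leq \langle t\rangle/2\}$ and the outer region $\{r\geq \langle t\rangle/2\}$, and handle each with a different tool. In the inner region, since $\partial_t+\partial_r$ is a first-order operator, we have pointwise $|(\partial_t+\partial_r)\partial Z^a d|\lesssim |\partial^2 Z^a d|$, and on this region $\langle t\rangle \lesssim \langle t-r\rangle$. Therefore
\[
\|\langle t\rangle (\partial_t+\partial_r)\partial Z^a d\|_{L^2(r\leq \langle t\rangle/2)} \lesssim \|\langle t-r\rangle \partial^2 Z^a d\|_{L^2} \lesssim (X^d_{|a|+2})^{1/2},
\]
which by Lemma \ref{lemmaWeightedNorm} is controlled by $(E^d_{\kappa-1})^{1/2}$ when $|a|+2\le \kappa-1$ and by $(E^d_{\kappa+1})^{1/2}$ when $|a|+2\le \kappa+1$.

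In the outer region, I apply Lemma \ref{GoodDeri} to $Z^a d$ in place of $d$ to get
\[
\langle t\rangle |(\partial_t+\partial_r)\partial Z^a d| \lesssim |\nabla Z^a d|+|\nabla Z^{a+1} d|+ t|(\partial_t^2-\Delta)Z^a d|.
\]
The first two terms contribute $(E^d_{|a|+2})^{1/2}$ in $L^2$. For the third, I invoke the wave equation in \eqref{LiquidHyper_GeDe}, namely $(\partial_t^2-\Delta)Z^a d=f^2_a$, and use $t\lesssim t+r$ on $\{r\geq \langle t\rangle/2\}$ to bound $\|t(\partial_t^2-\Delta)Z^a d\|_{L^2(r\geq \langle t\rangle/2)}\lesssim \|(t+r)f^2_a\|_{L^2}$. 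Lemma \ref{lemWeNo} and Lemma \ref{lemmaWeightedNorm} then express this in terms of generalized energies only.

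The remaining step is a bookkeeping check of the indices against the smallness hypotheses. For the first estimate ($|a|\le \kappa-3$) one has $|a|+1\le \kappa-2$ and $[|a|/2]+4\le \kappa-1$ since $\kappa\ge 9$, so every factor on the right of Lemma \ref{lemWeNo} either equals $E^d_{|a|+2}\le E^d_{\kappa-1}$ (the target) or is of order at most $\kappa-1$ and hence bounded by $\epsilon$ via the smallness assumption together with Lemma \ref{lemmaWeightedNorm}; collecting gives the desired bound. For the second estimate ($|a|\le \kappa-1$), the top-order index is $|a|+2\le \kappa+1$, so that $X^d_{|a|+2}\lesssim E^d_{\kappa+1}$ supplies the target, while $[|a|/2]+4\le \kappa-1$ keeps every remaining factor at an order absorbed by the smallness assumption. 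The main (mild) obstacle is precisely this index chase: one must confirm that in both regimes the orders $|a|+2$ and $[|a|/2]+4$ land simultaneously inside the ranges supplied by Lemma \ref{lemmaWeightedNorm}, which is exactly why $\kappa\ge 9$ is imposed.
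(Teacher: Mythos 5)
Your argument is correct and is exactly the route the paper intends: its proof of this lemma is just the one-line citation of Lemma \ref{GoodDeri}, Lemma \ref{lemWeNo} and Lemma \ref{lemmaWeightedNorm}, and your inner/outer splitting (inner region via $\langle t\rangle\lesssim\langle t-r\rangle$ and $X^d_{|a|+2}$, outer region via Lemma \ref{GoodDeri} applied to $Z^a d$ plus the equation $(\partial_t^2-\Delta)Z^a d=f^2_a$ and the weighted bound on $(t+r)f^2_a$) is precisely how those three lemmas combine. Your index bookkeeping ($|a|+2\le\kappa\mp1$ and $[|a|/2]+4\le\kappa-1$ for $\kappa\ge9$) matches what the paper's smallness hypotheses require, so nothing is missing.
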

\begin{proof}
It is a direct consequence of Lemma \ref{GoodDeri}, Lemma \ref{lemWeNo} and Lemma \ref{lemmaWeightedNorm}.
\end{proof}

Another consequence is that we can obtain the decay for $L^\infty$ norm of $\partial Z^{a} d$.
\begin{lem}\label{lemdecayliq}
Suppose $\kappa\geq 9$, $E^v_\kappa\lesssim \epsilon$, $E^d_{\kappa-1}\lesssim \epsilon$, then there hold
\begin{align*}
&\langle t\rangle\|\partial Z^{a} d\|_{L^\infty}\lesssim  (E^d_{\kappa-1})^{\frac12},\quad \forall\ |a|\leq \kappa-4, \\
&\langle t\rangle \|\partial Z^{a} d\|_{L^\infty}\lesssim (E^d_{\kappa+1})^{\frac12},\quad
\forall\ |a|\leq \kappa-2.
\end{align*}
\end{lem}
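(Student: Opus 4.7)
The plan is to bound $\langle t\rangle\|\partial Z^a d\|_{L^\infty}$ by splitting $\mathbb{R}^3$ into the two regions $\{r\geq \langle t\rangle/2\}$ and $\{r\leq \langle t\rangle/2\}$, applying the appropriate weighted Sobolev embedding on each piece, and then using Lemma \ref{lemmaWeightedNorm} to convert the weighted norms $X^d$ back to the generalized energy $E^d$.

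On the region $\{r\geq \langle t\rangle/2\}$ one has $\langle t\rangle\lesssim r$, so the weighted estimate \eqref{K-S-3D-2} of Klainerman--Sideris applied to $u=\partial Z^a d$ yields
\[
\langle t\rangle\,\|\partial Z^a d\|_{L^\infty(r\geq\langle t\rangle/2)}
\lesssim \sum_{|\alpha|\leq 1}\|\partial_r\widetilde\Omega^\alpha\partial Z^a d\|_{L^2}^{1/2}
\sum_{|\alpha|\leq 2}\|\widetilde\Omega^\alpha\partial Z^a d\|_{L^2}^{1/2}
\lesssim (E^d_{|a|+3})^{1/2}.
\]
On the complementary region $\{r\leq \langle t\rangle/2\}$ the inequality \eqref{KSWeiIn1} gives
\[
\langle t\rangle\|\partial Z^a d\|_{L^\infty(r\leq\langle t\rangle/2)}
\lesssim \|\partial Z^a d\|_{L^2}+\|\langle t-r\rangle\nabla\partial Z^a d\|_{L^2}+\|\langle t-r\rangle\nabla^2\partial Z^a d\|_{L^2}
\lesssim (E^d_{|a|+1})^{1/2}+(X^d_{|a|+3})^{1/2},
\]
since $\partial^2\partial Z^a d$ contributes derivatives of order $|a|+3$ to the weighted $L^2$ norm.

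Combining the two regions one obtains the pointwise bound
\[
\langle t\rangle\|\partial Z^a d\|_{L^\infty}\lesssim (E^d_{|a|+3})^{1/2}+(X^d_{|a|+3})^{1/2}.
\]
Under the hypotheses $\kappa\geq 9$, $E^v_\kappa\lesssim\epsilon$ and $E^d_{\kappa-1}\lesssim\epsilon$, Lemma \ref{lemmaWeightedNorm} supplies $X^d_{\kappa-1}\lesssim E^d_{\kappa-1}$ and $X^d_{\kappa+1}\lesssim E^d_{\kappa+1}$. Choosing $|a|\leq \kappa-4$ makes $|a|+3\leq\kappa-1$ and delivers the first bound; choosing $|a|\leq\kappa-2$ makes $|a|+3\leq\kappa+1$ and delivers the second.

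There is no essential analytic obstacle in this lemma since the key weighted Sobolev tools have already been set up: the whole argument is a packaging exercise combining \eqref{K-S-3D-2} (for the near-cone region) and \eqref{KSWeiIn1} (for the interior region) with the conversion $X^d\lesssim E^d$ of Lemma \ref{lemmaWeightedNorm}. The only care required is index bookkeeping, namely verifying that $|a|+3$ remains within the range where Lemma \ref{lemmaWeightedNorm} is available, which is precisely the reason for the gap between $\kappa-4$ (resp.\ $\kappa-2$) on the left and $\kappa-1$ (resp.\ $\kappa+1$) on the right.
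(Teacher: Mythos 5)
Your proposal is correct and follows essentially the same route as the paper: split into $\{r\geq\langle t\rangle/2\}$ and $\{r\leq\langle t\rangle/2\}$, apply \eqref{K-S-3D-2} and \eqref{KSWeiIn1} respectively to $u=\partial Z^a d$ to get $\langle t\rangle\|\partial Z^a d\|_{L^\infty}\lesssim (E^d_{|a|+3})^{1/2}+(X^d_{|a|+3})^{1/2}$, and then invoke Lemma \ref{lemmaWeightedNorm} with the index check $|a|+3\leq\kappa-1$ (resp.\ $\kappa+1$). Nothing further is needed.
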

\begin{proof}
By \eqref{K-S-3D-2} and \eqref{KSWeiIn1}, one has
\begin{align*}
\|\partial Z^a d \|_{L^\infty}
&\leq \langle t\rangle^{-1} (\| r\partial Z^a d \|_{L^\infty(r\geq \langle t\rangle/2)}+\| \langle t\rangle\partial Z^a d \|_{L^\infty(r\leq \langle t\rangle/2)})\\
&\lesssim \langle t\rangle^{-1}\big[ (E^d_{|a|+3})^{\frac12}+(X^d_{|a|+3})^{\frac12}\big].
\end{align*}
Using Lemma \ref{lemmaWeightedNorm} we can obtain the result.
\end{proof}

\section{Decay estimates for the velocity}
This section is devoted to the decay estimate under the a priori estimate assumption of the generalized energy.
The estimate for the weighted $L^2$ norm in Section 3 is also applicable.

\begin{lem}\label{lemDecayVel}
Suppose $E^v_{\kappa}+\sum_{1\leq|a|\leq\kappa} \|\nabla^a v\|_{L^2_{t,x}} +E^d_{\kappa-1}\lesssim \epsilon$
with $\kappa\geq 9$, then there hold
\begin{align}
\| \langle t\rangle^{\frac34}v\|_{L^\infty_x}\lesssim& ~\epsilon^{\frac12},\label{decayvel1}\\
\| \langle t\rangle^{\frac54}\nabla v\|_{L^\infty_x}\lesssim&~ \epsilon^{\frac12},\label{decayvel2}\\
\|\langle t \rangle^{\frac32} \nabla^a v\|_{L^\infty_{x}} \lesssim&~ \epsilon^{\frac12} (\ln {\langle t\rangle})^{\frac12},
\quad \forall\  2\leq |a| \leq \kappa-3. \label{decayvel3}
\end{align}
\end{lem}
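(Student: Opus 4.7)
My plan is to treat the velocity equation in \eqref{LiquidHyper} as a forced Stokes (heat) system and apply Duhamel's formula with the Leray projection $\mathbb{P}$. Using the divergence-free constraint to rewrite $v\cdot\nabla v=\nabla\cdot(v\otimes v)$, one has
\begin{equation*}
v(t)=e^{(t-t_0)\Delta}v(t_0)-\int_{t_0}^t e^{(t-s)\Delta}\mathbb{P}\,\nabla\cdot\bigl(v\otimes v+\nabla d\otimes\nabla d\bigr)(s)\,ds,
\end{equation*}
and similarly after applying $\nabla^a$. The tool set will be the standard heat semigroup bounds
$\|\nabla^{k}e^{\tau\Delta}\mathbb{P}f\|_{L^q}\lesssim \tau^{-k/2-\frac{3}{2}(\frac{1}{p}-\frac{1}{q})}\|f\|_{L^p}$,
the decay of $\partial Z^a d$ from Lemma \ref{lemdecayliq}, and the uniform bounds $E^v_\kappa+E^d_{\kappa-1}\lesssim\epsilon$ together with the $L^2_{t,x}$ control of $\nabla^a v$ given in the hypothesis.

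For \eqref{decayvel1} I would take $t_0=0$. The linear term satisfies $\|e^{t\Delta}v_0\|_{L^\infty}\lesssim\langle t\rangle^{-3/4}\epsilon^{1/2}$. For the Duhamel integral I would split at $s=t/2$. In the early range $s\in[0,t/2]$ the kernel is $(t-s)^{-5/4}\sim t^{-5/4}$ and I would place everything in $L^2$, using $\|v\otimes v\|_{L^2}\lesssim\|v\|_{L^\infty}\|v\|_{L^2}$ together with the not-yet-proven $L^\infty$ decay for $v$ (bootstrap) and $\|\nabla d\otimes\nabla d\|_{L^2}\lesssim\|\nabla d\|_{L^\infty}\|\nabla d\|_{L^2}\lesssim\langle s\rangle^{-1}\epsilon$ from Lemma \ref{lemdecayliq}. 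In the late range $s\in[t/2,t]$ I would place the nonlinearity in $L^\infty$ (so the kernel is only $(t-s)^{-1/2}$, integrable near $s=t$), using $\|\nabla d\otimes\nabla d\|_{L^\infty}\lesssim\langle t\rangle^{-2}\epsilon$. The resulting contribution is $\lesssim\langle t\rangle^{-3/4}\epsilon$, which closes the bootstrap for \eqref{decayvel1} provided $\epsilon$ is small. The estimate \eqref{decayvel2} is done exactly the same way after inserting one extra $\nabla$ (a factor $(t-s)^{-1/2}$ in both regimes, improving the overall rate by $\langle t\rangle^{-1/2}$).

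For \eqref{decayvel3} I would start instead from $t_0=t/2$ so that $\|v(t_0)\|_{L^2}\lesssim\epsilon^{1/2}$ is available; the linear part then decays like $\langle t\rangle^{-|a|/2-3/4}$, which is better than $\langle t\rangle^{-3/2}$ for $|a|\ge 2$. For the Duhamel term with $|a|\ge 2$ I put only one derivative on the heat kernel (keeping $(t-s)^{-1/2}$) and distribute $\nabla^a$ over the quadratic terms via the Leibniz rule. The harmless factor is the one carrying the highest-order derivative and estimated in $L^2$ against the already-controlled $E^v_\kappa$ or $E^d_{\kappa-1}$ (for the middle range of $s$), while the lower-order factor is placed in $L^\infty$ with the decay from \eqref{decayvel1}--\eqref{decayvel2} or from Lemma \ref{lemdecayliq}. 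The $(\ln\langle t\rangle)^{1/2}$ should arise from a borderline time-integral that I handle with a Cauchy--Schwarz step against the $L^2_{t,x}$ bound $\sum_{1\le|a|\le\kappa}\|\nabla^a v\|_{L^2_{t,x}}\lesssim\epsilon$: roughly, a term of the form $\int_0^t (t-s)^{-1/2}\langle s\rangle^{-5/2}\|\nabla^{a+1}v\|_{L^2}\,ds$ is Cauchy--Schwarz'd into $(\int(t-s)^{-1}\langle s\rangle^{-5}ds)^{1/2}\|\nabla^{a+1}v\|_{L^2_t L^2}$, producing exactly the claimed $\langle t\rangle^{-3/2}(\ln\langle t\rangle)^{1/2}$. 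The bound on $\|\partial_t v\|_{L^\infty}$ follows by reading $\partial_t v=\mu\Delta v-\mathbb{P}\nabla\cdot(v\otimes v+\nabla d\otimes\nabla d)$ pointwise and using \eqref{decayvel3} with $|a|=2$.

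The main difficulty I expect is the bookkeeping in \eqref{decayvel3}: carefully distributing the $a$ derivatives in $\nabla^a(\nabla d\otimes\nabla d)$ and $\nabla^a(v\otimes v)$ so that (i) each high-order factor can be absorbed via the $L^2_{t,x}$ hypothesis or the energy $E^d_{\kappa+1}$ (whose $\langle t\rangle^{\delta}$ growth must be offset by the extra decay from Lemma \ref{lemdecayliq}), and (ii) each low-order factor is placed in $L^\infty$ with the appropriate pre-established decay rate. The self-interaction $v\cdot\nabla v$ is particularly delicate because it needs the very decay being proved, so the whole estimate has to be set up as a simultaneous bootstrap on the quantities in \eqref{decayvel1}--\eqref{decayvel3} controlled by a continuity argument in time.
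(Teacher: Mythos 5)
Your treatment of \eqref{decayvel1} and \eqref{decayvel2} is essentially the paper's argument: the Duhamel representation with the Oseen kernel, the split at $s=t/2$, the $\nabla d\otimes\nabla d$ terms handled by Lemma \ref{lemdecayliq}, and the velocity self-interaction closed by absorbing the unknown sup-norm quantity against the small factor $\sum_{1\leq|a|\leq 2}\|\nabla^a v\|_{L^2_tL^2_x}\lesssim\epsilon$ (the paper does this by a linear absorption rather than a quadratic bootstrap, but that is a cosmetic difference). The genuine problem is in your plan for \eqref{decayvel3}. First, the step you designate as the source of the logarithm is wrong as written: Cauchy--Schwarz turns $\int_0^t (t-s)^{-1/2}\langle s\rangle^{-5/2}\|\nabla^{a+1}v\|_{L^2}\,\rmd s$ into $\bigl(\int (t-s)^{-1}\langle s\rangle^{-5}\rmd s\bigr)^{1/2}\|\nabla^{a+1}v\|_{L^2_tL^2_x}$, and $\int^t (t-s)^{-1}\rmd s$ diverges at $s=t$; if you restrict to $s\leq t/2$ the bracket is only $O(\langle t\rangle^{-1/2})$ and produces no logarithm, so in neither regime does this yield $\langle t\rangle^{-3/2}(\ln\langle t\rangle)^{1/2}$. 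Second, and structurally, your plan to put the highest-order factor in $L^2_x$ while ``keeping $(t-s)^{-1/2}$ on the kernel'' is inconsistent with an $L^\infty_x$ conclusion: an $L^2_x$ right-hand side forces the kernel into $L^2_x$, i.e.\ $\|\nabla H(t-s)\|_{L^2_x}\sim (t-s)^{-5/4}$, which is not integrable on $[t/2,t]$; and since you restarted the Duhamel formula at $t_0=t/2$, the entire nonlinear contribution lives precisely in that late range, so there is no ``middle range of $s$'' where the $L^2_x$ placement is safe.

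The paper's actual mechanism is different and you would need it (or a substitute). On $[t/2,t]$ the quadratic $d$-term is kept in $L^p_x$ with $p>3$ --- the constraint $p>3$ is exactly what makes $\|\nabla H(t-s)\|_{L^{p'}_x}\sim (t-s)^{-\frac12-\frac{3}{2p}}$ amenable to the Hardy--Littlewood step in time --- and then $p=\infty$ is taken, so both factors of $\nabla^{a}(\nabla d\otimes\nabla d)$ are estimated in $L^\infty_x$ by Lemma \ref{lemdecayliq}. Each such factor only decays like $s^{-1}$, so after distributing the weight $t^{3/2}\sim s^{1/2}\cdot s$ one factor is left with $s^{-1/2}$, and $\|s^{-1/2}\|_{L^2_t}\sim(\ln\langle t\rangle)^{1/2}$: this is where the logarithm comes from, not from the velocity terms (the $v$--$v$ interaction is absorbed via $\sum_{1\leq|a|\leq2}\|\nabla^a v\|_{L^2_tL^2_x}\,\|s^{3/2}\nabla^2 v\|_{L^\infty_{t,x}}$ plus $\|s^{5/4}\nabla v\|_{L^\infty_{t,x}}^2$, with no log). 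Note also that for $|a|$ close to $\kappa-3$ the top-order $d$-factor falls outside the $E^d_{\kappa-1}$-range of Lemma \ref{lemdecayliq}, which is why the rate saturates at $\langle t\rangle^{-3/2}(\ln\langle t\rangle)^{1/2}$ and the range stops at $|a|\leq\kappa-3$; your bookkeeping must respect this. Finally, the bound on $\partial_t v$ is not part of this lemma: reading the equation ``pointwise'' ignores that $\mathbb{P}\nabla\cdot$ is not bounded on $L^\infty$, and the paper devotes a separate lemma with an $L^6$-based estimate of $\nabla p$ to this point.
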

\begin{remark}
The first two decay estimates \eqref{decayvel1} and \eqref{decayvel2} are sharp in the sense that the decay rate is the same as the linear heat equation
if the initial data lies in the energy space. The restricted decay rate for higher order derivatives \eqref{decayvel3}  is due to the Ericksen stress.
\end{remark}
\begin{proof}
Thanks to the local well-posedness \cite{JL}, one easily has an uniform bound on the life span of lower-bound of $\delta^{-1}$ where $\delta$ is the size of the initial perturbation around equilibrium. Thus in the following argument, we always assume $t\geq 1$. Correspondingly, the $L^p_t$ norm denotes $L^p([1,t))$ for simplicity, where the integral time interval is $[1,t)$.

We begin by writing down the expression for velocity:
\begin{equation}\label{velmild}
v(t,x)=e^{t\Delta} v(1)-\int_1^t e^{(t-s)\Delta} \mathbb{P}\big[v\cdot\nabla v+\nabla\cdot(\nabla d\otimes\nabla d)\big](s) \rmd s,
\end{equation}
where $\mathbb{P}$ is the Leray projector.
The inhomogeneous term can be rewritten as
\begin{eqnarray*}
&&\int_1^t e^{(t-s)\Delta}\mathbb{P} \big[v\cdot\nabla v+\nabla\cdot(\nabla d\otimes\nabla d)\big](s) ds \\
&&=\int_1^t\int_{\mathbb{R}^3}
\nabla_y H(t-s,x-y) (v\otimes v+\nabla d\otimes\nabla d)(s,y) \rmd s\rmd y,
\end{eqnarray*}
where $H(t,x)$ is a function of the three dimensional heat kernel
$C t^{-\frac32} \exp(-|x|^2/t)$ convoluting the Leray projection operator.
Moreover, $H(t,x)$ behaves like (see for instance in
\cite{LR},
Proposition 11.1):
\begin{equation*}
t^{-\frac32} h (|x|/\sqrt t),
\end{equation*}
where
 $$|h(y)|\lesssim 1/{\langle y\rangle^{3}},\quad
|\nabla_y h(y)|\lesssim 1/{\langle y\rangle^{4}},\quad
|\nabla^2_y h(y)|\lesssim 1/{\langle y\rangle^{5}}.$$

Now we are ready to show the lemma. We first prove \eqref{decayvel1}.
To this end, 
we write
\begin{eqnarray*}
&& t^{\frac34} v=J_0
+J_1+J_2+J_3,
\end{eqnarray*}
where
\begin{eqnarray*}
&&J_0=t^{\frac34} e^{t\Delta} v(1),\\
&&J_1=t^{\frac34} \int_1^{\frac{t}{2}}\int_{\mathbb{R}^3}
 \nabla_y H(t-s,x-y) (v\otimes v+\nabla d\otimes\nabla d)(s,y)\  \rmd s\rmd y,\\
&&J_2=t^{\frac34} \int_{\frac{t}{2}}^{t}\int_{\mathbb{R}^3}
\nabla_y H(t-s,x-y)(v\otimes v)(s,y)\  \rmd s\rmd y,\\
&&J_3=t^{\frac34} \int_{\frac{t}{2}}^{t}\int_{\mathbb{R}^3}
\nabla_y H(t-s,x-y)(\nabla d\otimes\nabla d)(s,y)\  \rmd s\rmd y.
\end{eqnarray*}
For $J_0$,  one gets by Young's inequality that
\begin{equation*}
J_0\lesssim \|v(1,\cdot)\|_{L^2_x}\lesssim ({E_{\kappa}^v})^{\frac12}.
\end{equation*}
For $J_1$,  one has  $t/2 \leq t-s \leq t$ from $1\leq s\leq t/2$.
Hence applying Young's inequality 
and the Hardy-Littlewood-Sobolev inequality 
yields
\begin{align*}
J_1
&~\leq  2\int_1^{\frac{t}{2}} (t-s)^{\frac34}
\| \nabla H(t-s,\cdot )\|_{L^2_x} \big \| (|v|^2 +|\nabla d|^2 )(s,\cdot) \big\|_{L^2_x}\  \rmd s\\
&~ \lesssim \int_1^{\frac{t}{2}} (t-s)^{-\frac12}
\big\| (|v|^2 +|\nabla  d|^2)(s,\cdot) \big\|_{L^2_x}\  \rmd s\\
&~\lesssim  \big \||v|^2 +|\nabla  d|^2 \big\|_{L^2_{t,x}}.
\end{align*}
Then one can obtain the estimate for $J_1$ by using Lemma \ref{lemdecayliq}:
\begin{eqnarray*}
J_1\lesssim \|\nabla v\|_{L^2_{t,x}} (E^v_{\kappa})^{\frac12}+\| \langle t\rangle^{-1} \|_{L^2_t} E^d_{\kappa-1}
\lesssim \|\nabla v\|^2_{L^2_{t,x}}+E^v_{\kappa}+E^d_{\kappa-1}.
\end{eqnarray*}
For the term $J_2$, one has $t/2\leq s\leq t$. Hence, by
Young's inequality and the {Hardy-Littlewood-Sobolev} inequality, we can derive that
\begin{align*}
J_2&~\leq \int_{\frac t2}^t s^{\frac34}|\nabla H(t-s)|*  |(v\otimes v )(s)|   \ \rmd s \\
&~\lesssim \int_{\frac t2}^t \|\nabla H(t-s)\|_{L^1_x} \| v(s)\|_{L^\infty_x} \| s^{\frac34} v(s)\|_{L^\infty_x}\  \rmd s \\
&~\lesssim \int_{\frac t2}^t (t-s)^{-\frac12} \| v(s)\|_{L^\infty_x} \| s^{\frac34} v(s)\|_{L^\infty_x}\  \rmd s \\
&~\lesssim \| v(s)\|_{L^2_t L^\infty_{x}} \| s^{\frac34} v(s)\|_{L^\infty_{t,x}}\\[-4mm]\\
&~ \lesssim \sum_{1\leq |a|\leq 2}\|\nabla^a v\|_{L^2_tL^2_x}
\| s^{\frac34} v(s)\|_{L^\infty_{t,x}}.
\end{align*}
For $J_3$, similar to the estimate of $J_2$, one has
\begin{equation*}
J_3\lesssim  \| s^{\frac34} \nabla d(s)\|_{L_t^\infty L^\infty_x} \| \nabla d(s)\|_{L_t^2 L^\infty_x}\lesssim E^d_{\kappa-1},
\end{equation*}
where we used Lemma \ref{lemdecayliq} in the last estimate.
Gathering the estimates for $J_0$, $J_1$, $J_2$, $J_3$, we conclude
\begin{align*}
t^{\frac34}v(t)\lesssim ({E_{\kappa}^v})^{\frac12}+E^v_{\kappa}+E^d_{\kappa-1}+\|\nabla v\|^2_{L^2_{t,x}}
+\sum_{1\leq |a|\leq 2}\|\nabla^a v\|_{L^2_tL^2_x}
\| s^{\frac34} v(s)\|_{L^\infty_{t,x}}.
\end{align*}
Absorbing the last term yields \eqref{decayvel1}.

Secondly, we treat \eqref{decayvel2}.
Similar to the estimate of \eqref{decayvel1}, we write
\begin{equation*}
t^{\frac54}\nabla v(t,x)=J^1_0+J^1_1+J^1_2+J^1_3,
\end{equation*}
where
\begin{eqnarray*}
&&J^1_0=t^{\frac54} \nabla e^{t\Delta} v(1),\\
&&J^1_1=t^{\frac54} \int_1^{\frac{t}{2}}\int_{\mathbb{R}^3}
 \nabla^2_y H(t-s,x-y) (v\otimes v+\nabla d\otimes\nabla d)(s,y)\  \rmd s\rmd y,\\
&&J^1_2=t^{\frac54} \int_{\frac{t}{2}}^{t}\int_{\mathbb{R}^3}
\nabla_y H(t-s,x-y) \nabla(v\otimes v)(s,y)\  \rmd s\rmd y,\\
&&J^1_3=t^{\frac54}\int_{\frac{t}{2}}^{t}\int_{\mathbb{R}^3}
\nabla_y H(t-s,x-y) \nabla(\nabla d\otimes\nabla d)(s,y)\  \rmd s\rmd y.
\end{eqnarray*}
We remark that the formulation of $J_1^1$ is different from $J_1$, while the other terms are similar.

For $J_0^1$, we get by Young's inequality that
\begin{equation*}
J^1_0\lesssim \|v(1,\cdot)\|_{L^2_x}\lesssim ({E_{\kappa}^v})^{\frac12}.
\end{equation*}
Applying Young's inequality and the Hardy-Littlewood-Sobolev inequality, we have that
\begin{align*}
J^1_1
&\leq  2\int_1^{\frac{t}{2}} (t-s)^{\frac54}
\big\| \nabla^2 H(t-s,\cdot )\big\|_{L^2_x} \big \| (| v|^2 +|\nabla d|^2)(s,\cdot) \big\|_{L^2_x}\  \rmd s\\
&\lesssim \big \| |v|^2 +|\nabla d|^2\big\|_{L^2_{t,x}}
\lesssim \|\nabla v\|^2_{L^2_{t,x}}+E^v_{\kappa}+E^d_{\kappa-1},
\end{align*}
where the relation $t/2 \leq t-s \leq t$ has been used.
For $J_2^1$, we derive that
\begin{eqnarray*}
&&J^1_2
\lesssim \sum_{1\leq |a|\leq 2}\|\nabla^a v\|_{L^2_tL^2_x} \| s^{\frac54}\nabla v(s)\|_{L^\infty_{t,x}}.
\end{eqnarray*}
The estimate for $J_3^1$ is similar to $J_2$ with slight modifications.
By Young's inequality, the {Hardy-Littlewood-Sobolev} inequality and Lemma \ref{lemdecayliq}, one derives that
\begin{align*}
J^1_3&\lesssim \int_{\frac t2}^t \|\nabla H(t-s)\|_{L^1_x} \| s^{\frac14}  \nabla^2 d(s)\|_{L^\infty_x} \| s \nabla d(s)\|_{L^\infty_x}\  \rmd s \\
& \lesssim   \|s^{-\frac34} \|_{L^2_t} \| s \nabla^2 d(s)\|_{L^\infty_{t,x}} \|s \nabla d(s)\|_{L^\infty_{t,x}}\lesssim E_{\kappa-1}^d.
\end{align*}
Gathering the estimate for $J_0^1$, $J_1^1$, $J_2^1$, $J_3^1$, we conclude
\begin{align*}
t^{\frac54}v(t)\lesssim ({E_{\kappa}^v})^{\frac12}+E^v_{\kappa}+E^d_{\kappa-1}+\|\nabla v\|^2_{L^2_{t,x}}
+\sum_{1\leq |a|\leq 2}\|\nabla^a v\|_{L^2_tL^2_x}
\| s^{\frac54} v(s)\|_{L^\infty_{t,x}}.
\end{align*}
Absorbing the last term yields \eqref{decayvel2}.

Finally, we treat \eqref{decayvel3}. For simplicity of presentation, we only show the case for $|a|=2$. The higher-order case can be estimated in the same but lengthier argument.

Similar to the estimate of \eqref{decayvel2}, we write:
\begin{equation*}
t^{\frac32}\nabla^2 v(t,x)=J^2_0+J^2_1+J^2_2+J^2_3,
\end{equation*}
where
\begin{eqnarray*}
&&J^2_0=t^{\frac32}e^{t\Delta} \nabla^2 v(1),\\
&&J^2_1=t^{\frac32} \int_1^{\frac{t}{2}}\int_{\mathbb{R}^3}
 \nabla^2_y H(t-s,x-y) \nabla(v\otimes v+\nabla d\otimes\nabla d)(s,y)\  \rmd s\rmd y,\\
&&J^2_2=t^{\frac32} \int_{\frac{t}{2}}^{t}\int_{\mathbb{R}^3}
\nabla_y H(t-s,x-y) \nabla^2(v\otimes v)(s,y)\  \rmd s\rmd y,\\
&&J^2_3=t^{\frac32}\int_{\frac{t}{2}}^{t}\int_{\mathbb{R}^3}
\nabla_y H(t-s,x-y) \nabla^2(\nabla d\otimes\nabla d)(s,y)\  \rmd s\rmd y.
\end{eqnarray*}
As the estimates of $J_0^2$, $J_1^2$ and $J_2^2$ are similar to those of $J_0^1$, $J_1^1$ and $J_2^1$ respectively,
we only sketch them here. By Young's inequality, the {Hardy-Littlewood-Sobolev} inequality and Lemma \ref{lemdecayliq}, one has
\begin{align*}
J^2_0&\lesssim t^{-\frac14}\|v(1,\cdot)\|_{L^2_x}\lesssim ({E_{\kappa}^v})^{\frac12},\\[-4mm]\\
&J^2_1\lesssim \|\nabla(v\otimes v+\nabla d\otimes\nabla d)\|_{L_t^{\frac43}L^2_x}
\lesssim \sum_{1\leq |a|\leq \kappa}\|\nabla^a v\|^2_{L^2_{t,x}}+E^v_{\kappa}+E^d_{\kappa-1},\\
&J^2_2\lesssim  \|s^{\frac32}\nabla^2(v\otimes v)\|_{L^2_tL^\infty_x}\\[-4mm] \\
&\quad \lesssim \sum_{1\leq |a|\leq 2}\|\nabla^a v\|_{L^2_tL^2_x} \|s^{\frac32}\nabla^2 v\|_{L^\infty_{t,x}}
+\|s^{\frac54}\nabla v\|^2_{L^\infty_{t,x}} \| s^{-1}\|_{L^2_t}.
\end{align*}
For $J_3^2$, by Young's inequality, the {Hardy-Littlewood-Sobolev} inequality, one derives that
\begin{align*}
J^2_3&\lesssim \int_{\frac t2}^t \|\nabla_y H(t-s)\|_{L^{p'}_x} \sum_{2\leq |a|\leq 3}\| s^{\frac12}  \nabla^a d(s)\|_{L^{p}_x} \sum_{1\leq |a|\leq 2}\| s \nabla^a d(s)\|_{L^\infty_x}\  \rmd s \\
& \lesssim  \int_{\frac t2}^t  (t-s)^{-\frac12-\frac{3}{2p}} \sum_{2\leq |a|\leq 3}\| s^{\frac12} \nabla^a d(s)\|_{L^p_x} \sum_{1\leq |a|\leq 2}\|s \nabla^a d(s)\|_{L^\infty_x}\  \rmd s \\
&\lesssim   \sum_{2\leq |a|\leq 3} \| s^{\frac12} \nabla^a d(s)\|_{L^{\frac{2p}{p-3}}_t L^p_x}
\sum_{1\leq |a|\leq 2}\|s \nabla^a d(s)\|_{L^\infty_{t,x}},
\end{align*}
where $p$ and $p'$ are dual index, $3<p\leq \infty$. Here, the constraint $3<p$ is due to the application of the {Hardy-Littlewood-Sobolev} inequality.
To earn the maximum decay, we take $p=\infty$. Consequently,
by Lemma \ref{lemdecayliq}, there holds
\begin{equation*}
J_3^2 \lesssim (E^d_{\kappa-1})^{\frac12} \|s^{-\frac12}\|_{L^2_t} \sum_{1\leq |a|\leq 3}\|s \nabla^a d(s)\|_{L^\infty_{t,x}}
\lesssim (\ln \langle t\rangle)^{\frac12}  E^d_{\kappa-1}.
\end{equation*}
Finally, one conclude that
\begin{align*}
t^{\frac32}\nabla^2 v(t)\lesssim& ({E_{\kappa}^v})^{\frac12}+E^v_{\kappa}
+(\ln \langle t\rangle)^{\frac12}E^d_{\kappa-1}
+\sum_{1\leq |a|\leq \kappa}\|\nabla^a v\|^2_{L^2_{t,x}}
+\| s^{\frac54} \nabla v(s)\|^2_{L^\infty_{t,x}}\\
&+\sum_{1\leq |a|\leq 2}\|\nabla^a v\|_{L^2_tL^2_x} \|s^{\frac32}\nabla^2 v\|_{L^\infty_{t,x}}.
\end{align*}
Absorbing the last term, \eqref{decayvel3} can be inferred from \eqref{decayvel2}.
\end{proof}

An immediate consequence of the above lemma is the decay estimate for $\partial_t v$.
\begin{lem}
Under the assumption of Lemma \ref{lemDecayVel}, there holds
\begin{align*}
\|\partial_t v\|_{L^\infty}\lesssim \epsilon^{\frac12} \langle t\rangle^{-\frac32} (\ln \langle t \rangle)^{\frac12}.
\end{align*}
\end{lem}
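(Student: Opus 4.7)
The plan is to use the momentum equation directly. From $\eqref{LiquidHyper}_1$,
\[
\partial_t v \;=\; \mu \Delta v \;-\; (v\cdot\nabla) v \;-\; \nabla p \;-\; \nabla\cdot(\nabla d\otimes\nabla d),
\]
so it suffices to control the $L^\infty$-norm of each of the four terms on the right. The Laplacian term is handled exactly by the $|a|=2$ instance of \eqref{decayvel3}, yielding $\|\mu\Delta v\|_{L^\infty}\lesssim \epsilon^{1/2}\langle t\rangle^{-3/2}(\ln\langle t\rangle)^{1/2}$, which already matches the target decay. The convective term obeys $\|(v\cdot\nabla)v\|_{L^\infty}\le\|v\|_{L^\infty}\|\nabla v\|_{L^\infty}\lesssim \epsilon\langle t\rangle^{-2}$ by \eqref{decayvel1} and \eqref{decayvel2}. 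Similarly, the Ericksen stress satisfies $\|\nabla\cdot(\nabla d\otimes\nabla d)\|_{L^\infty}\lesssim \|\nabla d\|_{L^\infty}\|\nabla^2 d\|_{L^\infty}\lesssim \epsilon\langle t\rangle^{-2}$ by Lemma \ref{lemdecayliq}. Each of these is strictly faster than the target rate and contributes subdominantly.

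The delicate piece is the pressure. Taking the divergence of the momentum equation and using $\nabla\cdot v=0$ gives the Poisson equation
\[
-\Delta p \;=\; \partial_i v^j\,\partial_j v^i \;+\; \partial_i\partial_j(\partial_i d\cdot\partial_j d),
\]
so $\nabla p$ is represented as a Calder\'on-Zygmund singular integral acting on quadratic expressions in $v$ and $\nabla d$. Since such operators are \emph{not} bounded on $L^\infty$, a direct estimate fails. My plan is to trade a derivative for Lebesgue boundedness: use the Sobolev embedding $W^{1,p}(\mathbb{R}^3)\hookrightarrow L^\infty(\mathbb{R}^3)$ for some fixed $p\in(3,\infty)$, write
\[
\|\nabla p\|_{L^\infty}\;\lesssim\;\|\nabla p\|_{L^p}+\|\nabla^2 p\|_{L^p},
\]
and then invoke the $L^p$-boundedness of Riesz-type operators ($1<p<\infty$) to reduce the right-hand side to $L^p$-norms of $(v\cdot\nabla)v+\nabla\cdot(\nabla d\otimes\nabla d)$ and its first derivative. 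Each factor in these products is either bounded in $L^\infty$ by Lemma \ref{lemDecayVel} and Lemma \ref{lemdecayliq}, or in $L^2$ by the generalized energy; interpolating these yields an $L^p$ estimate with decay at least as fast as the target rate $\epsilon^{1/2}\langle t\rangle^{-3/2}(\ln\langle t\rangle)^{1/2}$ (in fact typically faster, since the slowest factor $v$ itself already decays like $\langle t\rangle^{-3/4}$).

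The main obstacle is the pressure term: without the $L^\infty$-boundedness of Riesz transforms, one is forced to invest an extra derivative via Sobolev embedding, and then verify that the higher-derivative products still decay fast enough after interpolating between $L^2$ energies and $L^\infty$ pointwise decay. Once this is done, summing the four contributions gives the claimed bound $\|\partial_t v\|_{L^\infty}\lesssim \epsilon^{1/2}\langle t\rangle^{-3/2}(\ln\langle t\rangle)^{1/2}$.
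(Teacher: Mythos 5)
Your proposal follows essentially the same route as the paper: estimate $\partial_t v$ term by term from the momentum equation, use \eqref{decayvel1}--\eqref{decayvel3} and Lemma \ref{lemdecayliq} for $\Delta v$, $(v\cdot\nabla)v$ and the Ericksen stress, and treat $\nabla p$ via the Leray/Riesz operator in a finite Lebesgue exponent combined with Sobolev embedding and $L^2$--$L^\infty$ interpolation (the paper takes $p=6$ and $\|\nabla p\|_{L^\infty}\lesssim\|\nabla p\|_{L^6}^{1/2}\|\nabla^2 p\|_{L^6}^{1/2}$, obtaining $\|\nabla p\|_{L^\infty}\lesssim\epsilon\langle t\rangle^{-5/3}$). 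The only point to be careful about is that your exponent cannot be arbitrary in $(3,\infty)$: the slowest term $\nabla d\cdot\nabla^3 d$ gives $\langle t\rangle^{-2+2/p}$, so you need $p\ge 4$ (e.g.\ $p=6$) for the pressure to decay at least as fast as the target rate.
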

\begin{proof}
We first show that (this decay rate is not optimal but it is sufficient for our purpose)
\begin{equation}\label{decaypre}
\|\nabla p\|_{L^\infty}\lesssim \epsilon\langle t\rangle^{-\frac53}.
\end{equation}
Due the incompressible condition for $v$, we can use the Leray projector to write the pressure explicitly:
\begin{align*}
\nabla p={(\mathbb{P}-1)}(v\nabla v+\nabla\cdot(\nabla d \otimes\nabla d)).
\end{align*}
Then by Sobolev imbedding, one deduces that
\begin{align*}
\|\nabla p\|_{L^\infty}
&\lesssim \|\nabla p\|^{\frac12}_{L^6}\|\nabla^2 p\|^{\frac12}_{L^6}\\[-4mm]\\
&\lesssim \sum_{|a|\leq 1}\|\nabla^a (v\nabla v+\nabla\cdot(\nabla d \otimes\nabla d)) \|_{L^6}\\
&\lesssim \sum_{|a|\leq 1}(\|\nabla^a v\|^{\frac13}_{L^2} \|\nabla^a v\|^{\frac23}_{L^\infty})
\sum_{1\leq |a|\leq 2}\|\nabla^a v\|_{L^\infty} \\
&\quad + \sum_{|a|\leq 1}(\| \nabla \nabla^a d\|^{\frac13}_{L^2}\| \nabla \nabla^a d\|^{\frac23}_{L^\infty})
\sum_{1\leq |a|\leq 2}\| \nabla \nabla^a d\|_{L^\infty}.
\end{align*}
all of which can be controlled by $\epsilon \langle t\rangle^{-\frac53}$ by Lemma \ref{lemdecayliq} and Lemma \ref{lemDecayVel}.  Thus \eqref{decaypre} is proved.

Then, thanks to Lemma \ref{lemdecayliq},  Lemma \ref{lemDecayVel} and \eqref{decaypre},  we deduce that
\begin{align*}
\|\partial_t v\|_{L^\infty}&\lesssim \|\Delta v\|_{L^\infty}+\|v\cdot \nabla v \|_{L^\infty}+
\| \nabla\cdot(\nabla d\otimes \nabla d)\|_{L^\infty}+\|\nabla p\|_{L^\infty}\\
&\lesssim \langle t\rangle^{-\frac32} (\ln \langle t \rangle)^{\frac12}\epsilon^{\frac12},
\end{align*}
which yields the lemma.
\end{proof}

\section{The energy estimates}
This section is devoted to the energy estimates, which corresponds to generalized energy estimate for the velocity,
the higher-order and the lower-order generalized energy estimates for the orientation field.

\subsection{Generalized energy estimate for velocity}
In this subsection, we estimate the generalized energy for the velocity, which turns out to be uniformly bounded in time. The main trouble in the estimate of $E^v_{\kappa}$
is due to commutators between the scaling operator and the viscosity terms. Fortunately, we can take the  approach borrowed from \cite{CLLM, Kessenich}.

Let $\kappa \geq 9$, $0\leq|a|\leq \kappa$.
Taking the $L^2$ inner product of $\eqref{LiquidHyper_GeDe}_1$  with $Z^a v$, we have
\begin{equation}\label{HighEnery_v}
\frac 12 \frac{d}{dt}
\int_{\mathbb{R}^n}|Z^a v|^2\ \rmd x
-\int_{\mathbb{R}^n} \mu \Delta (S-1)^{a_1}\Gamma^{a'} v \cdot  Z^a v\ \rmd x
=\int_{\mathbb{R}^n}
f^1_{ a} \cdot  Z^a v  \ \rmd x.
\end{equation}
Recalling the expression for $f^1_a$ in \eqref{f1f2},  one has
\begin{align*}
\int_{\mathbb{R}^n}
f^1_{\alpha a} \cdot  Z^a v  \ \rmd x
\lesssim\sum_{|b|+|c|\leq |a|}
\| \nabla Z^a v\|_{L^2}
\big(\big\| |\nabla Z^b d|\
|\nabla Z^c d| \big\|_{L^2}
+\big\| |Z^b v|\ | Z^c v| \big\|_{L^2}\big).
\end{align*}
Due to the symmetry between $b$ and $c$, we assume $|b|\leq |c|$ without loss of generality. Thus one has $|c|\leq [{|a|}/2]\leq \kappa-4$. Consequently, by Lemma \ref{lemdecayliq},
the above can be further bounded by
\begin{align*}
&\| \nabla Z^a v\|_{L^2}
(\|\nabla Z^{|a|} d\|_{L^2}\|\nabla Z^{[{|a|}/2]} d \|_{L^\infty}
+\| Z^{|a|}v \|_{L^2}\| Z^{[{|a|}/2]} v \|_{L^\infty}
)\nonumber \\[-4mm] \nonumber\\
&\lesssim \| \nabla Z^a v\|_{L^2} ( \langle t\rangle^{-1}(E^d_{\kappa+1} E^d_{\kappa-1})^\frac12
+\| \nabla Z^\kappa v\|_{L^2} \| Z^\kappa v\|_{L^2}   ).
\end{align*}

Next, we estimate the diffusion terms with indefinite sign.
To this end, we need a technical lemma followed from \cite{CLLM}.

\begin{lem}\cite{CLLM} \label{lemItera}(Iteration lemma)
Let $\{f_l\},\ \{g_l\},\ \{F_l\}$ be three nonnegative sequences,
where $0\leq l\leq \kappa$.
Suppose that
\begin{equation*}
f_0+g_0\lesssim F_0,
\end{equation*}
and for all $1\leq l\leq \kappa$,
\begin{equation*}
f_l+g_l-g_{l-1}\lesssim F_l.
\end{equation*}
Then there holds
\begin{equation*}
\sum_{0\leq m\leq l}(f_m+g_m)\lesssim \sum_{0\leq m\leq l}F_m,
\end{equation*}
for all $0\leq l\leq \kappa$.
\end{lem}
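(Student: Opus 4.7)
My plan is to turn the telescoping-type hypothesis into a pointwise bound on each $g_l$ first, and then sum. The argument is purely algebraic and short, so I would not set up heavy induction machinery; a one-step discrete Gronwall on the scalar sequence $\{g_l\}$ is the cleanest route.

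The first step I would carry out is to drop $f_l \geq 0$ from the hypothesis $f_l + g_l - g_{l-1} \lesssim F_l$ to obtain the one-sided recursion $g_l \leq g_{l-1} + C F_l$ for some constant $C$ independent of $l$. Iterating this from $l$ down to $1$ and using the base hypothesis $f_0 + g_0 \lesssim F_0$ (which in particular controls $g_0 \lesssim F_0$) would yield the pointwise bound $g_l \lesssim \sum_{m=0}^{l} F_m$ for every $0 \leq l \leq \kappa$.

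Next I would plug this bound back into $f_l + g_l \lesssim F_l + g_{l-1}$, which immediately gives $f_l \lesssim \sum_{m=0}^{l} F_m$, so that individually $f_l + g_l \lesssim \sum_{m=0}^{l} F_m$. Summing these term-by-term estimates over $0 \leq m \leq l$ and absorbing the harmless combinatorial factor $l + 1 \leq \kappa + 1$ into the implicit constant would give exactly the claimed bound $\sum_{0 \leq m \leq l}(f_m + g_m) \lesssim \sum_{0 \leq m \leq l} F_m$.

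There is no substantive obstacle in this proof; the only point to watch is that the implicit constants grow by a bounded factor with each iteration. Since $\kappa$ is fixed throughout the paper and the number of iterations is at most $\kappa$, the final constant still depends only on $\kappa$, which is consistent with the convention of $\lesssim$ used elsewhere. If one wanted a sharper constant, an explicit induction on $l$ tracking $C_l = \max(2C_{l-1}, C)$ would make the dependence transparent, but this is unnecessary for the applications in the subsequent energy estimates.
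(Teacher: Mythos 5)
Your proof is correct. The paper itself gives no argument for this lemma (it is quoted from \cite{CLLM}), and your telescoping of the one-sided recursion $g_l\le g_{l-1}+CF_l$, followed by recovering $f_l$ from $f_l\lesssim F_l+g_{l-1}$ and summing with the harmless factor $l+1\le\kappa+1$, is exactly the standard induction-on-$l$ argument used there. One small remark: the constants do not actually compound multiplicatively in your scheme --- the telescoping is additive, so the only $\kappa$-dependence enters through the final factor $\kappa+1$, which is consistent with the paper's convention that implicit constants may depend on $\kappa$.
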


Now we are ready to  estimate the diffusion terms as follows:
\begin{align*}
&-\mu \int_{\mathbb{R}^n} \Delta (S-1)^{a_1}\Gamma^{a'} v \cdot  S^{a_1}\Gamma^{a'} v\ \rmd x \nonumber\\
&=-\mu \sum_{l\leq a_1} C_{a_1}^l (-1)^{a_1-l} \int_{\mathbb{R}^n}\Delta S^l\Gamma^{a'} v \cdot  S^{a_1}\Gamma^{a'} v\ \rmd x \nonumber\\
&\geq -\frac12\mu \|\nabla S^{a_1}\Gamma^{a'} v \|^2_{L^2}
-  \mu C\sum_{l\leq a_1-1}
\|\nabla S^l\Gamma^{a'} v \|^2_{L^2}.
\end{align*}
Inserting the above into \eqref{HighEnery_v}, together with the estimates of the nonlinear terms yield
\begin{eqnarray*}
&&\frac 12 \frac{d}{dt}
\int_{\mathbb{R}^n}|Z^a v|^2 \ \rmd x
+\frac12 \mu \|\nabla Z^a v \|^2_{L^2_x}
- \mu C\sum_{l\leq a_1-1}\|\nabla S^l\Gamma^{a'} v \|^2_{L^2_x} \\
&&\leq C\| \nabla Z^\kappa v\|^2_{L^2_x} \| Z^\kappa v\|^2_{L^2_x}+C
\langle t\rangle^{-2} E^d_{\kappa+1} E^d_{\kappa-1}.
\end{eqnarray*}
Then integrating in time over $[0,t)$ on both sides of the above inequality  gives
\begin{eqnarray*}
&& \| Z^a v(t,\cdot)\|^2_{L^2_x}
+  \mu \|\nabla Z^a v \|^2_{L^2_tL^2_x}
-\mu C\sum_{l\leq {a_1}-1} \|\nabla S^l\Gamma^{a'} v\|^2_{L^2_tL^2_x} \\
&&\lesssim \| Z v(0,\cdot)\|^2_{L^2_x}+\int_0^t\langle \tau \rangle^{-2} E^d_{\kappa+1}(\tau) E^d_{\kappa-1}(\tau) \ \rmd \tau+
 \|\nabla Z^{\kappa} v \|^2_{L^2_tL^2_x}
 \|Z^{\kappa} v \|^2_{L^\infty_tL^2_x}
. \nonumber
\end{eqnarray*}
Now we can use Lemma \ref{lemItera} to absorb the lower order diffusion terms to derive that
\begin{eqnarray*}
&& \| Z^{\kappa} v(t,\cdot)\|^2_{L^2_x}
+ \mu \|\nabla Z^{\kappa} v \|^2_{L^2_tL^2_x} \\
&&\lesssim \| Z^{\kappa} v(0,\cdot)\|^2_{L^2_x}+\int_0^t\langle \tau \rangle^{-2} E^d_{\kappa+1}(\tau) E^d_{\kappa-1}(\tau) \ \rmd \tau+
 \|\nabla Z^{\kappa} v(\cdot,\cdot) \|^2_{L^2_tL^2_x}
 \|Z^{\kappa} v(\cdot,\cdot) \|^2_{L^\infty_tL^2_x}
. \nonumber
\end{eqnarray*}
This gives the a priori estimate \eqref{priorivel}.

\subsection{Higher-order energy estimate for the orientation field}
In this subsection, we estimate the higher-order energy for the orientation field $d$, which will exhibit some polynomial growth in time. The main difficulty
is the potential derivative loss due to the quasilinear effect. Fortunately, this difficulty can be overcome by a symmetry structure of the system. See the estimate of $I_{12}$ below.

Let $\kappa \geq 9$, $0 \leq|a|\leq \kappa$.
Taking the $L^2$ inner product of  ${\eqref{LiquidHyper_GeDe}}_2$ with $\partial_t Z^a d$ gives
\begin{eqnarray}\label{HighEnergyVe}
\frac 12 \frac{d}{dt}
\int_{\mathbb{R}^n}|\partial Z^a d|^2\ \rmd x
=\int_{\mathbb{R}^n}
f^2_{a} \cdot  \partial_t Z^a d  \ \rmd x
. \nonumber
\end{eqnarray}
Thanks to \eqref{f1f2}, the right hand side above can be divided into $I_1$ and $I_2$, where
\begin{align*}
I_{1}=&-\int_{\mathbb{R}^n}
\big[ v\cdot\nabla(  v\cdot\nabla Z^a d)
+ 2 v\cdot\nabla\partial_t Z^a d\big]
 \cdot \partial_t Z^a d \ \rmd x,  \nonumber\\
&-\int_{\mathbb{R}^n}
(\partial_t Z^a v\cdot\nabla)  d
 \cdot \partial_t Z^a d \ \rmd x\\
\triangleq& I_{11}+I_{12}
\end{align*}
contains the highest order terms which may lose one derivative at first glance, and $I_2$ refers to the lower order ones:
\begin{align*}
I_2
=&-\sum_{{\substack{b+c=a\\c\neq a}}} C_a^b\int_{\mathbb{R}^3}
2Z^b v\cdot\nabla\partial_t Z^c d
\cdot \partial_t Z^a d\ \rmd x\\
&-\int_{\mathbb{R}^3}\sum_{{\substack{b+c=a\\b\neq a}}} C_a^b
(\partial_t Z^b v\cdot\nabla Z^c d) \cdot \partial_tZ^a d\ \rmd x\\
&+\int_{\mathbb{R}^3}\sum_{b+c+e=a} C_a^{b,c}
(\nabla Z^b d\cdot\nabla Z^c d-\partial_t Z^b d\cdot \partial_t Z^c d) Z^e d \cdot \partial_tZ^a d\ \rmd x\\
&-\int_{\mathbb{R}^3}\sum_{b+c+e+f=a} C_a^{b,c,e} \big[2 (Z^b v\cdot\nabla) Z^c d\cdot\partial_t Z^e d \big] Z^f d\cdot \partial_tZ^a d\ \rmd x\\
&-\int_{\mathbb{R}^3}\sum_{b+c+e+f+g=a}C_a^{b,c,e,f}
\big[  ( Z^b v\cdot \nabla) Z^c d \cdot ((Z^e v\cdot \nabla) Z^f d)  \big] Z^g d\cdot \partial_tZ^a d\ \rmd x \\
&-\int_{\mathbb{R}^3}
\sum_{{\substack{b+c+e=a\\e\neq a}}}C_a^{b,c}Z^b v\cdot\nabla( Z^c v\cdot \nabla Z^e d)\cdot \partial_tZ^a d\ \rmd x \\
\triangleq&I_{21}+I_{22}+I_{23}+I_{24}+I_{25}+I_{26}.
\end{align*}

Now we take care of the nonlinear terms group by group.

\subsubsection{Estimates of $I_2$}

\textbf{Estimate of $I_{21}$}:
\begin{align*}
I_{21}=&-\sum_{\substack{ b+c=a\\c\neq a}} C_a^b\int_{\mathbb{R}^3}
2Z^b v\cdot\nabla\partial_t Z^c d
\cdot \partial_t Z^a d\ \rmd x.
\end{align*}
If $|c|\leq |b|$, by Lemma \ref{lemdecayliq}, the above can be bounded by
\begin{align*}
&\tiny\sum_{{\substack{b+c=a\\|c|\leq |b|}}}\tiny
\|Z^b v\|_{L^2} \|\nabla\partial_t Z^c d\|_{L^\infty}\|\partial_t Z^a d\|_{L^2}
\lesssim \langle t\rangle^{-1} (E^v_{\kappa}E^d_{\kappa+1}E^d_{\kappa-1})^{\frac12}.
\end{align*}
Otherwise, if $|b|\leq |c|$, by \eqref{K-S-3D-2} and Lemma \ref{lemmaWeightedNorm}, one has
\begin{align*}
&\sum_{\tiny{\substack{|b|\leq[|a|/2]\\|c|\leq |a|-1}}} (\int_{r\geq\langle t\rangle/2 }+\int_{r\leq\langle t\rangle/2 })
|Z^b v|\ |\nabla\partial_t Z^c d|
\|\partial_t Z^a d|\ \rmd x\\
&\lesssim \langle t\rangle^{-1}\|r Z^{[|a|/2]} v\|_{L^\infty(r\geq\langle t\rangle/2)} \|\nabla\partial_t Z^{|a|-1} d\|_{L^2}
\|\partial_t Z^a d\|_{L^2} \\[-4mm]\\
&\quad+\langle t\rangle^{-1}\| Z^{[|a|/2]} v\|_{L^\infty} \|\langle t-r\rangle \nabla\partial_t Z^{|a|-1} d\|_{L^2(r\leq\langle t\rangle/2)}
\|\partial_t Z^a d\|_{L^2} \\[-4mm]\\
&\lesssim \langle t\rangle^{-1} (E^v_{\kappa})^{\frac12}E^d_{\kappa+1}.
\end{align*}

\textbf{Estimate of $I_{22}$}:
\begin{align*}
I_{22}=&-\sum_{\tiny\substack{b+c=a \\b\neq a }} C_a^b\int_{\mathbb{R}^3}
\partial_t Z^b v\cdot\nabla Z^c d
\cdot \partial_t Z^a d\ \rmd x.
\end{align*}
First for the case of $c=a$, the above quantity becomes
\begin{align*}
&-\int_{\mathbb{R}^3}
(\partial_t  v\cdot\nabla) Z^a d
\cdot \partial_t Z^a d\ \rmd x
\lesssim \|\partial_tv\|_{L^\infty} \| \partial Z^a d \|^2_{L^2}.
\end{align*}
Next, if $|b|\leq|c|\leq |a|-1$, by \eqref{KSWeiIn2}, Lemma \ref{lemmaWeightedNorm} and \eqref{K-S-3D-2}, one has
\begin{align*}
&-\sum_{\tiny\substack{b+c=a \\|b|\leq|c|\leq |a|-1 }}
C_a^b(\int_{r\leq\langle t\rangle/2}+\int_{r\geq\langle t\rangle/2})
\partial_t Z^b v\cdot\nabla Z^c d
\cdot \partial_t Z^a d\ \rmd x\\
&\lesssim \langle t\rangle^{-1}\sum_{\tiny\substack{b+c=a \\|b|\leq|c|\leq |a|-1 }}
\|\partial_t Z^b v\|_{L^3} \|\langle t-r\rangle\nabla Z^c d\|_{L^6(r\leq \langle t\rangle/2)}
\|\partial_t Z^a d\|_{L^2}\\
&\quad+\langle t\rangle^{-1}\sum_{\tiny\substack{b+c=a \\|b|\leq|c|\leq |a|-1 }}
\|r\partial_t Z^b v\|_{L^\infty(r\geq \langle t\rangle/2)} \|\nabla Z^c d\|_{L^2}
\|\partial_t Z^a d\|_{L^2}\\
&\lesssim \langle t\rangle^{-1} (E^v_{\kappa})^{\frac12}E^d_{\kappa+1}.
\end{align*}
Otherwise, if $|c|\leq |b|$, by Lemma \ref{lemdecayliq},
we have the control of
\begin{align*}
\|\partial_t Z^{|a|-1} v\|_{L^2} \|\nabla Z^{[|a|/2]} d\|_{L^\infty} \|\partial_t Z^a d\|_{L^2}
\lesssim  \langle t\rangle^{-1} (E^v_{\kappa})^{\frac12}E^d_{\kappa+1}.
\end{align*}

\textbf{Estimate of $I_{23}$}:
\begin{align*}
I_{23}&=\int_{\mathbb{R}^3} \sum_{b+c+e=a} C_a^{b,c}
(\nabla Z^b d\cdot\nabla Z^c d-\partial_t Z^b d\cdot \partial_t Z^c d) Z^e d \cdot\partial_tZ^a d\ \rmd x \\[-4mm]\\
&\lesssim\sum_{b+c+e=a}\int_{\mathbb{R}^3}
|\partial Z^b d|\ |\partial Z^c d|\ |Z^e d| |\partial_tZ^a d|\ \rmd x.
\end{align*}
By Lemma \ref{lemdecayliq}, the above  can be further bounded by
\begin{align*}
&\sum_{b+c+e=a}\int_{\mathbb{R}^3}
|\partial Z^b d|\ |\partial Z^c d|\ |Z^e d| |\partial_tZ^a d|\ \rmd x\\[-4mm]\\
&\lesssim\sum_{ [{|a|}/2] \leq |e|\leq |a|}
\|\partial Z^{[{|a|}/2]} d\|_{L^\infty} \|\partial Z^{[|a|/2]} d\|_{L^3} \|Z^e d\|_{L^6} \|\partial_tZ^a d\|_{L^2}\\[-4mm]\\
&\qquad+\sum_{|e|\leq[{|a|}/2]} \|\partial Z^{[|a|/2]} d\|_{L^\infty} \|\partial Z^{|a|} d\|_{L^2} \|Z^e d\|_{L^\infty} \|\partial_tZ^a d\|_{L^2}\\[-4mm]\\
&\lesssim\langle t\rangle^{-1} E^d_{\kappa+1} E^d_{\kappa-1}.
\end{align*}

\textbf{Estimate of $I_{24}$}:
\begin{align*}
I_{24}=&-\int_{\mathbb{R}^3} \sum_{b+c+e+f=a} C_a^{b,c,e} \big[2 (Z^b v\cdot\nabla) Z^c d\cdot\partial_t Z^e d \big] Z^f d\cdot\partial_tZ^a d\ \rmd x\\[-4mm]\\
&\lesssim \int_{\mathbb{R}^3} \sum_{b+c+e+f=a}  |Z^b v|\ |\partial Z^c d|\ |\partial Z^e d|\ |Z^f d| |\partial_tZ^a d|\ \rmd x.
\end{align*}
If $|f|\geq [|a|/2]$, by Sobolev inequalities and Lemma \ref{lemdecayliq}, the above can controlled by
\begin{align*}
&\sum_{[|a|/2]\leq |f|\leq |a|}\|Z^{[|a|/2]} v\|_{L^\infty} \|\partial Z^{[|a|/2]} d\|_{L^\infty}\ \|\partial Z^{[|a|/2]} d\|_{L^3} \|Z^f d\|_{L^6} \|\partial_tZ^a d\|_{L^2}\\
&\lesssim \langle t\rangle^{-\frac32}\|\nabla Z^{\kappa-1} v\|_{L^2}
E^d_{\kappa+1}E^d_{\kappa-1}.
\end{align*}
Otherwise, if $|f|\leq [|a|/2]$,  by Lemma \ref{lemdecayliq}, $I_{34}$ can be controlled by
\begin{align*}
&\sum_{|f|\leq [|a|/2]} \|Z^{|a|/2} v\|_{L^\infty} \|\partial Z^{|a|} d\|_{L^2} \|\partial Z^{[|a|/2]} d\|_{L^\infty}
\|Z^f d\|_{L^\infty} \|\partial_tZ^a d\|_{L^2}\\
&+\sum_{|f|\leq [|a|/2]} \|Z^{|a|} v\|_{L^2} \|\partial Z^{[a/2]} d\|_{L^\infty} \|\partial Z^{[|a|/2]} d\|_{L^\infty}
\|Z^f d\|_{L^\infty} \|\partial_tZ^a d\|_{L^2}\\
&\lesssim \langle t\rangle^{-1}
E^d_{\kappa+1}(E^v_{\kappa} E^d_{\kappa-1})^{\frac12}
[1+(E^d_{\kappa-1})^{\frac12}].
\end{align*}

\textbf{Estimate of $I_{25}$}:
By Lemma \ref{lemdecayliq}, we get
\begin{align*}
I_{25}=&-\sum_{b+c+e+f+g=a}C_a^{b,c,e,f}\int_{\mathbb{R}^3}
\big[  ( Z^b v\cdot \nabla) Z^c d \cdot ((Z^e v\cdot \nabla) Z^f d)  \big] Z^g d \cdot \partial_t Z^a d\ \rmd x\\
\lesssim&\sum_{[|a|/2]\leq |g|\leq |a|}
\|Z^{[|a|/2]} v\|^2_{L^6}
\| Z^{[|a|/2]} d\|^2_{L^\infty}  \|Z^g d\|_{L^6} \|\partial_t Z^a d\|_{L^2}\\
&+\sum_{|g|\leq [|a|/2] }
\big(\|Z^{|a|} v\|_{L^2}\|Z^{[|a|/2]} v\|_{L^\infty}
\| \nabla Z^{[|a|/2]} d\|^2_{L^\infty}  \\
&\qquad+\|Z^{[|a|/2]} v\|^2_{L^\infty}
\|\nabla Z^{|a|} d\|_{L^2} \| \nabla Z^{[|a|/2]} d\|_{L^\infty}\big)\|Z^g d\|_{L^\infty} \|\partial_t Z^a d\|_{L^2}\\[-4mm]\\
\lesssim& \langle t\rangle^{-1} \| \nabla Z^{\kappa-1 }v \|_{L^2} (E^v_{\kappa})^{\frac12} E^d_{\kappa+1}(1+(E^d_{\kappa-1})^{\frac12}).
\end{align*}

\textbf{Estimate of $I_{26}$}:
By the H\"{o}lder inequality, one has
\begin{align*}
I_{26}=-\sum_{{\substack{b+c+e=a\\e\neq a}}}C_a^{b,c}\int_{\mathbb{R}^3}
Z^b v\cdot\nabla( Z^c v\cdot \nabla Z^e d)
\cdot \partial_t Z^a d\ \rmd x
\lesssim \|\nabla Z^{\kappa} v\|^2_{L^2} E^d_{\kappa+1}.
\end{align*}

\subsubsection{Estimate of $I_{1}$}
We estimate $I_1$ in this part.

\textbf{Estimate of $I_{11}$}:  Employing integration by parts, one has
\begin{align*}
I_{11}&=-\int_{\mathbb{R}^n}
\big[ v\cdot\nabla(  v\cdot\nabla Z^a d)\big] \cdot \partial_t Z^a d \ \rmd x  \nonumber\\
&=-\int_{\mathbb{R}^n} (\partial_tv\cdot\nabla) Z^a d \cdot (v\cdot\nabla) Z^a d \ \rmd x
+\frac12\partial_t\int_{\mathbb{R}^n}|(v\cdot\nabla) Z^a d|^2 \ \rmd x \\
&\leq \|\nabla Z^{\kappa-1} v \|^2_{L^2_x} E^{d}_{\kappa+1}
+\frac12 \partial_t\int_{\mathbb{R}^n} |(v\cdot\nabla) Z^a d|^2 \ \rmd x.
\end{align*}


\textbf{Estimate of $I_{12}$}: The difficulty in estimating $I_{12}$ lies on the possible derivative loss problem.
 However, this difficulty can be bypassed by using the symmetry structure of the system.

Employing integration by parts, we have
\begin{align}\label{I12_1}
I_{12}&= -\int_{\mathbb{R}^n}
(\partial_t Z^a v\cdot\nabla)  d
 \cdot \partial_t Z^a d \ \rmd x  \nonumber\\
&=-\partial_t\int_{\mathbb{R}^n}
( Z^a v\cdot\nabla)  d
 \cdot \partial_t Z^a d \ \rmd x
+\int_{\mathbb{R}^n}
( Z^a v\cdot\nabla)  \partial_t d
 \cdot \partial_t Z^a d \ \rmd x \nonumber\\
&\quad\ +\int_{\mathbb{R}^n}
( Z^a v\cdot\nabla)  d
 \cdot \partial_{tt} Z^a d \ \rmd x.
\end{align}
By Lemma \ref{lemdecayliq}, the second term on the right hand side of \eqref{I12_1} is controlled by
\begin{align*}
\|Z^a v\|_{L^2} \|\partial_t Z^a d \|_{L^2}
\| \nabla\partial_t d\|_{L^\infty}
\lesssim \langle t\rangle^{-1} (E^{v}_{\kappa} E^{d}_{\kappa+1}E^{d}_{\kappa-1})^{\frac12}.
\end{align*}
For the last term of \eqref{I12_1}, we are going to insert the equation
$\eqref{LiquidHyper_GeDe}_2$ for the orientation field into this expression to show the symmetry:
\begin{align}\label{I12_2}
&\int_{\mathbb{R}^n}
( Z^a v\cdot\nabla)  d
 \cdot \partial_{tt} Z^a d \ \rmd x   \nonumber\\
&=\int_{\mathbb{R}^n}
( Z^a v\cdot\nabla)  d
 \cdot \Delta Z^a d \ \rmd x
 +\int_{\mathbb{R}^n}
( Z^a v\cdot\nabla)  d
 \cdot f^2_{a} \ \rmd x .
\end{align}
 Thanks to Lemma \ref{decaypre}, we get by integration by parts that
\begin{align*}
\int_{\mathbb{R}^n}
( Z^a v\cdot\nabla)  d
 \cdot \Delta Z^a d \ \rmd x~ &\le \| \nabla Z^a d\|_{L^2}
(\| \nabla Z^a v\|_{L^2} \| \nabla d\|_{L^\infty}
+\| Z^a v\|_{L^2} \| \nabla^2 d\|_{L^\infty}) \\[-4mm]\\
&\lesssim \langle t\rangle^{-1}
(E^d_{\kappa+1}E^d_{\kappa-1})^{\frac12}
(\| \nabla Z^a v\|_{L^2}+(E^v_{\kappa})^{\frac12}).
\end{align*}
On the other hand, it holds that

\begin{lem}
\begin{align*}
\int_{\mathbb{R}^n}   (Z^a v\cdot\nabla)  d \cdot f^2_{a } \ \rmd x & \leq
-\frac12\frac{d}{dt} \int_{\mathbb{R}^n} \big|( Z^a v\cdot\nabla)  d\big|^2
\ \rmd x +\langle t\rangle^{-1}(E^v_{\kappa}+E^d_{\kappa-1})^{\frac{1}{2}}E^d_{\kappa+1}\\
&\quad+\|\nabla Z^\kappa v\|^2_{L^2} E^d_{\kappa+1}+\langle t\rangle^{-1}
\|\nabla Z^\kappa v\|_{L^2} E^d_{\kappa+1}+\|\partial_t v\|_{L^\infty}E^d_{\kappa+1}\nonumber.
\end{align*}
\end{lem}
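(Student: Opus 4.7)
The plan is to isolate from $f^2_a$ the terms that can place the top-order derivative on $v$, exploit an algebraic cancellation to produce the $-\tfrac12\partial_t\int|Z^a v\cdot\nabla d|^2$ structure, and handle every other summand by the standard Sobolev plus decay machinery already deployed for $I_{21}$--$I_{26}$. Inspecting \eqref{f1f2}, the only sum that threatens derivative loss is $-\sum_{b+c=a} C_a^b\bigl(2 Z^b v\cdot\nabla\partial_t Z^c d+\partial_t Z^b v\cdot\nabla Z^c d\bigr)$, and within it only the extreme indices $(b,c)=(a,0)$ and $(b,c)=(0,a)$ are delicate.

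The main algebraic observation is the identity
\[
-2\,Z^a v\cdot\nabla\partial_t d-\partial_t Z^a v\cdot\nabla d
=-\partial_t\bigl(Z^a v\cdot\nabla d\bigr)-Z^a v\cdot\nabla\partial_t d.
\]
Pairing the right-hand side with $(Z^a v\cdot\nabla)d$ gives exactly $-\tfrac12\partial_t\int|Z^a v\cdot\nabla d|^2\,\rmd x$ from the first term. For the residual $-\int(Z^a v\cdot\nabla d)(Z^a v\cdot\nabla\partial_t d)\,\rmd x$ I would integrate by parts in $x$; the incompressibility $\nabla\cdot Z^a v=0$ (from $\eqref{LiquidHyper_GeDe}_3$) eliminates the would-be transport contribution and leaves only two cubic integrals, $\int[(Z^a v\cdot\nabla)Z^a v]\cdot\nabla d\cdot\partial_t d\,\rmd x$ and $\int (Z^a v)_i(Z^a v)_j\,\partial_i\partial_j d\cdot\partial_t d\,\rmd x$. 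By Lemma~\ref{lemdecayliq} the factors $\nabla d,\nabla^2 d,\partial_t d$ all decay like $\langle t\rangle^{-1}\epsilon^{1/2}$, so these contribute at worst $\|\nabla Z^\kappa v\|_{L^2}^2 E^d_{\kappa+1}$ and $\langle t\rangle^{-1}(E^v_\kappa)^{1/2}E^d_{\kappa+1}$, already present on the right.

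For the $(0,a)$ summand $-2v\cdot\nabla\partial_t Z^a d-\partial_t v\cdot\nabla Z^a d$, I pair against $(Z^a v\cdot\nabla)d$ and integrate by parts in $x$ on the first piece; using $\nabla\cdot v=0$ shifts the $\nabla$ onto $Z^a v$, $\nabla d$, or $d$, and the resulting cubic-type expressions are controlled by $\|v\|_{L^\infty}\lesssim\langle t\rangle^{-3/4}\epsilon^{1/2}$ from Lemma~\ref{lemDecayVel} together with Lemma~\ref{lemdecayliq}, fitting into $\langle t\rangle^{-1}(E^v_\kappa+E^d_{\kappa-1})^{1/2}E^d_{\kappa+1}$. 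The piece $-\partial_t v\cdot\nabla Z^a d$ is bounded directly by H\"older and the pointwise estimate for $\partial_t v$ to produce $\|\partial_t v\|_{L^\infty}E^d_{\kappa+1}$.

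All remaining contributions in \eqref{f1f2}, namely the first four sums together with the non-extreme summands of the last one, are strictly subcritical: any factor $Z^b v$ with $|b|=|a|$ is paired with derivatives of $d$ of order strictly less than $|a|$, which can be placed in $L^\infty$ via Lemma~\ref{lemdecayliq}, and the corresponding integrals are handled exactly as in the treatment of $I_{21}$--$I_{26}$ above, each fitting into one of the bulk error terms. The main obstacle is the cancellation in the second paragraph: naively time-integrating $-2Z^a v\cdot\nabla\partial_t d$ alone yields $-\partial_t\int|Z^a v\cdot\nabla d|^2$ plus a residual $2\int(Z^a v\cdot\nabla d)(\partial_t Z^a v\cdot\nabla d)\,\rmd x$ containing the top-order time-derivative $\partial_t Z^a v$, which cannot be directly controlled; it is the \emph{precise} combination with $-\partial_t Z^a v\cdot\nabla d$ coming from the \emph{same} sum that eliminates this residual and produces the clean coefficient $\tfrac12$, and it is the incompressibility $\nabla\cdot Z^a v=0$ that makes the spatial integration by parts of the leftover term $-\int(Z^a v\cdot\nabla d)(Z^a v\cdot\nabla\partial_t d)\,\rmd x$ yield only harmless lower-order expressions.
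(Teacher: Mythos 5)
Your central cancellation is correct and is essentially the paper's: grouping the two $(b,c)=(a,0)$ summands $-2Z^a v\cdot\nabla\partial_t d-\partial_t Z^a v\cdot\nabla d=-\partial_t\bigl((Z^a v\cdot\nabla)d\bigr)-(Z^a v\cdot\nabla)\partial_t d$ and pairing with $(Z^a v\cdot\nabla)d$ gives exactly $-\tfrac12\tfrac{d}{dt}\int|(Z^a v\cdot\nabla)d|^2\,\rmd x$ plus a residual containing only $Z^a v$ and low-order derivatives of $d$, which is controllable; the paper does the same extraction using $\partial_t Z^a v\cdot\nabla d$ alone and puts $2Z^a v\cdot\nabla\partial_t d$ into its directly-estimated group ($K_5$), so the two organizations are equivalent. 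Your treatment of the other extreme $-2v\cdot\nabla\partial_t Z^a d$ by spatial integration by parts with $\nabla\cdot v=0$ also matches the paper. (A small quibble: the term that is uncontrollable if estimated directly is $\partial_t Z^a v\cdot\nabla d$, not $2Z^a v\cdot\nabla\partial_t d$; the latter is harmless by H\"older since $\nabla\partial_t d$ is low order, so the ``precise combination'' is convenient bookkeeping rather than the only way out.)

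There is, however, a genuine gap in your classification of the remaining terms. The fourth sum in \eqref{f1f2} contains the summand $b=c=0$, $e=a$, namely $-v\cdot\nabla(v\cdot\nabla Z^a d)$, which carries $v_iv_j\partial_i\partial_j Z^a d$: for $|a|=\kappa$ this is $\kappa+2$ derivatives of $d$, controlled by neither $E^d_{\kappa+1}$ nor $X^d_{\kappa+1}$, so it is not ``strictly subcritical,'' cannot be handled by putting the low-order factors in $L^\infty$, and is precisely the case excluded from $I_{26}$ (note the restriction $e\neq a$ there; the $e=a$ term was sent to $I_{11}$ and treated with the multiplier $\partial_t Z^a d$ by a time-derivative extraction that is not available here). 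With the multiplier $(Z^a v\cdot\nabla)d$ this summand must also be integrated by parts in space using $\nabla\cdot v=0$, moving the outer $v\cdot\nabla$ onto $(Z^a v\cdot\nabla)d$, which is exactly what the paper does in \eqref{I122}, yielding a bound of the type $\|\nabla Z^a d\|_{L^2}\|v\|_{L^\infty}^2\bigl(\|\nabla Z^a v\|_{L^2}\|\nabla d\|_{L^\infty}+\|Z^a v\|_{L^2}\|\nabla^2 d\|_{L^\infty}\bigr)\lesssim \|\nabla Z^\kappa v\|^2_{L^2}E^d_{\kappa+1}$. Your proposal instead asserts that the first four sums are handled ``exactly as in $I_{21}$--$I_{26}$,'' which fails for this term; the fix is the same incompressibility integration by parts you already use elsewhere, but as written the step is missing and the blanket subcriticality claim is false.
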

\begin{proof}

Inserting the expression of $f_a^2$ in \eqref{f1f2} into
$\int_{\mathbb{R}^n}( Z^a v\cdot\nabla)  d\cdot f^2_{a } \ \rmd x$, we get
\begin{align}\label{I122}
&\int_{\mathbb{R}^n}
 ( Z^a v\cdot\nabla)  d
 \cdot f^2_{a } \ \rmd x  \nonumber \\
&=-\int_{\mathbb{R}^n}
 ( Z^a v\cdot\nabla)  d
 \cdot \big[(v\cdot\nabla)
(v\cdot\nabla Z^a d+ 2 \partial_t Z^a d )+\partial_t Z^a v \cdot \nabla d
 \big]\ \rmd x
+L,
\end{align}
where
\begin{align*}
L=&\sum_{b+c+e=a} C_a^{b,c}\int_{\mathbb{R}^n}
 ( Z^a v\cdot\nabla)  d \cdot
(\nabla Z^b d\cdot\nabla Z^c d-\partial_t Z^b d\cdot \partial_t Z^c d) Z^e d \ \rmd x \nonumber\\
&-\sum_{b+c+e+f=a} C_a^{b,c,e}\int_{\mathbb{R}^n}
 ( Z^a v\cdot\nabla)  d
 \cdot \big[2 (Z^b v\cdot\nabla) Z^c d\cdot\partial_t Z^e d \big] Z^f d \ \rmd x \nonumber\\
&-\sum_{b+c+e+f+g=a}C_a^{b,c,e,f}\int_{\mathbb{R}^n}
 ( Z^a v\cdot\nabla)  d\cdot\big[  ( Z^b v\cdot \nabla) Z^c d \cdot ((Z^e v\cdot \nabla) Z^f d)  \big] Z^g d \ \rmd x \nonumber\\
&-\sum_{{\substack{b+c+e=a\\e\neq a}}}C_a^{b,c}\int_{\mathbb{R}^n}( Z^a v\cdot\nabla)  d
 \cdot Z^b v\cdot\nabla( Z^c v\cdot \nabla Z^e d) \ \rmd x \nonumber\\
&-\sum_{{\substack{b+c=a\\c\neq a}}}C_a^b\int_{\mathbb{R}^n}( Z^a v\cdot\nabla)  d
 \cdot(2Z^b v\cdot\nabla\partial_t Z^c d+\partial_t Z^c v\cdot\nabla Z^b d) \ \rmd x \nonumber\\
\triangleq&K_1+K_2+K_3+K_4+K_5.
\end{align*}

The first term on the right hand side of \eqref{I122} refers to the highest order term, while the remaining terms denoted by
$L$ refers to the lower order ones.
{The estimate of $L$ is similar but much easier than that of $I_2$,  because $I_2$ contains quadratic terms, while $L$ contains only cubic terms or higher.}

Now let us estimate the right hand side of \eqref{I122} one by one.
The first part of the higher order terms can be bounded by
\begin{align}
&-\int_{\mathbb{R}^n} ( Z^a v\cdot\nabla)  d \cdot \big[(v\cdot\nabla)
(v\cdot\nabla Z^a d+ 2 \partial_t Z^a d) \big]\ \rmd x \nonumber \\
&\lesssim \|\nabla Z^a d \|_{L^2} \|v\|^2_{L^\infty}
( \|\nabla Z^a v \|_{L^2} \|\nabla d\|_{L^\infty}+\| Z^a v \|_{L^2} \|\nabla ^2 d\|_{L^\infty})
\nonumber\\[-4mm]\nonumber\\
&+\|\partial_t Z^a d \|_{L^2} \|v\|_{L^\infty}
\|\nabla Z^a v \|_{L^2} \| \nabla d\|_{L^\infty}+
\|\partial_t Z^a d \|_{L^2} \|v\|_{L^6}
\| Z^a v \|_{L^6} \| \nabla^2 d\|_{L^6} \nonumber\\[-4mm]\nonumber\\
&\lesssim E^d_{\kappa+1}((E^v_{\kappa})^{\frac12}+1)
\| \nabla Z^\kappa v\|^2_{L^2}. \nonumber
\end{align}
For the second part of the higher order terms, now one can see the symmetry is present.
Thus, employing integration by parts, one has
\begin{align}
&-\int_{\mathbb{R}^n}
 ( Z^a v\cdot\nabla)  d
 \cdot (\partial_t Z^a v \cdot \nabla d)
\ \rmd x \nonumber \\
&=-\frac12\frac{d}{dt} \int_{\mathbb{R}^n}
\big|( Z^a v\cdot\nabla)  d\big|^2
\ \rmd x \nonumber
+\int_{\mathbb{R}^n}
 ( Z^a v\cdot \partial_t\nabla)  d
 \cdot ( Z^a v \cdot \nabla d)
\ \rmd x \nonumber \\
&\leq -\frac12\frac{d}{dt} \int_{\mathbb{R}^n}
\big|( Z^a v\cdot\nabla)  d\big|^2
\ \rmd x + \|\nabla Z^\kappa v\|^2_{L^2} E^d_{\kappa+1}
\nonumber.
\end{align}

Now we show the estimate for $L$ in \eqref{I122}.
The estimate is similar to the estimates of $I_1$ to $I_6$ in the higher-order energy estimate for the orientation field. Hence we only sketch the argument.
Due to H\"{o}lder inequality, \eqref{K-S-3D-2}, \eqref{KSWeiIn1}, \eqref{KSWeiIn1}, Lemma \ref{lemmaWeightedNorm} and Lemma \ref{lemdecayliq}, one deduces that
\begin{align*}
K_1&~\lesssim\|Z^a v\|_{L^6} \|\nabla d\|_{L^\infty}
\|\partial Z^{|a|} d\|_{L^2} \|\partial Z^{[|a|/2]} d\|_{L^3} \| Z^{[|a|/2]} d\|_{L^\infty}\\[-4mm]\\
&~\quad+\|Z^a v\|_{L^6} \|\nabla d\|_{L^\infty}
\|\partial Z^{[|a|/2]} d\|_{L^2} \|\partial Z^{[|a|/2]} d\|_{L^6} \| Z^{|a|} d\|_{L^6} \\[-4mm]\\
&~\lesssim \langle t\rangle ^{-1} \|\nabla Z^{\kappa} v\|_{L^2} (E^d_{\kappa+1})^{\frac12}E^d_{\kappa-1} (1+(E^d_{\kappa-1})^{\frac12}),
\end{align*}
and
\begin{align*}
K_2&~\lesssim\|Z^a v\|_{L^6} \|\nabla d\|_{L^\infty} \| Z^{|a|} v\|_{L^6}
\big(\|\partial Z^{|a|} d\|_{L^2}\| \partial Z^{[|a|/2]} d\|_{L^6} \| Z^{[|a|/2]} d\|_{L^\infty} \\[-4mm]\\
&~\quad+\|\partial Z^{[|a|/2]} d\|_{L^2}\| \partial Z^{[|a|/2]} d\|_{L^\infty} \| Z^{|a|} d\|_{L^6}  \big)\\[-4mm]\\
&~\lesssim \langle t\rangle ^{-1} \|\nabla Z^{\kappa} v\|^2_{L^2} (E^d_{\kappa+1})^{\frac12}E^d_{\kappa-1} (1+(E^d_{\kappa-1})^{\frac12}),
\end{align*}
and
\begin{align*}
K_3&~\lesssim\|Z^a v\|_{L^6} \|\nabla d\|_{L^\infty} \| Z^{|a|} v\|_{L^6}
\big( \|\nabla Z^{|a|} d\|_{L^2} \|Z^{[|a|/2]}v \|_{L^\infty} \|\nabla Z^{[|a|/2]} d\|_{L^6} \| Z^{[|a|/2]} d\|_{L^\infty} \\[-4mm]\\
&~\quad+\|\nabla Z^{[|a|/2]} d\|_{L^\infty} \|Z^{|a|}v \|_{L^2} \|\nabla Z^{[|a|/2]} d\|_{L^6} \| Z^{[|a|/2]} d\|_{L^\infty}\\[-4mm]\\
&~\quad+\|\nabla Z^{[|a|/2]} v\|_{L^2}\| \nabla Z^{[|a|/2]} d\|^2_{L^\infty}  \| Z^{|a|} d\|_{L^6}  \big)\\[-4mm]\\
&~\lesssim \langle t\rangle ^{-1} \|\nabla Z^{\kappa} v\|^2_{L^2} (E^d_{\kappa+1} E^v_{\kappa})^{\frac12}E^d_{\kappa-1} (1+(E^d_{\kappa-1})^{\frac12}),
\end{align*}
and
\begin{align*}
K_4\lesssim &~\|Z^a v\|_{L^6} \|\nabla d\|_{L^\infty} \|Z^{|a|}v \|_{L^6} \| Z^{[|a|/2]+1} v\|_{L^6} \|\nabla Z^{[|a|/2]+1} d\|_{L^2} \\[-4mm]\\
&~+\|Z^a v\|_{L^6} \|\nabla d\|_{L^\infty}\|Z^{[|a|/2]}v \|_{L^3} \big(\|\nabla Z^{|a|} v\|_{L^2} \|\nabla Z^{[|a|/2]} d\|_{L^\infty}
\\[-4mm]\\
&~+ \|Z^{[|a|/2]+1} v\|_{L^\infty} \|\nabla Z^{|a|} d\|_{L^2}
+ \|Z^{|a|} v\|_{L^6}  \|\nabla^2 Z^{[|a|/2]} d\|_{L^3}
\big) \\[-4mm]\\
\lesssim &~\langle t\rangle^{-1}\| \nabla Z^{\kappa}\|^2_{L^2} (E^v_{\kappa}E^d_{\kappa-1}E^d_{\kappa+1} )^{\frac12},
\end{align*}
and
\begin{align*}
K_5\lesssim \| Z^a v\|_{L^6} \|\nabla d\|_{L^6}\|Z^{|a|} v\|_{L^6} \|\nabla Z^{|a|} d\|_{L^2}
   \lesssim \|\nabla Z^{\kappa}v\|^2_{L^2} E^d_{\kappa+1}.
\end{align*}
Combining all the above estimates  and noting $E^v_{\kappa}\lesssim\epsilon$ and $E^d_{\kappa-1}\lesssim\epsilon$, we get the lemma.
\end{proof}
\subsubsection{Completing the estimates}
Combining all the above estimates in this subsection, one has
\begin{align*}
&\frac 12 \frac{d}{dt}\int_{\mathbb{R}^n}|\partial Z^a d|^2\ \rmd x-\frac{1}{2}\frac{d}{dt}\int_{\mathbb{R}^n}\big|(v\cdot\nabla) Z^a d\big|^2\ \rmd x\\
&\quad+\frac{d}{dt} \int_{\mathbb{R}^n}
( Z^a v\cdot\nabla)  d \cdot\partial_tZ^a d\ \rmd x
+\frac12\frac{d}{dt} \int_{\mathbb{R}^n}
\big|( Z^a v\cdot\nabla)  d\big|^2
\ \rmd x\nonumber \\
&\lesssim \langle t\rangle^{-1}
(E^v_{\kappa}+E^d_{\kappa-1})^{\frac{1}{2}}
E^d_{\kappa+1}+\|\nabla Z^\kappa v\|^2_{L^2} E^d_{\kappa+1}\\[-4mm]\\
&\quad+\langle t\rangle^{-1}
\|\nabla Z^\kappa v\|_{L^2} E^d_{\kappa+1}
+\|\partial_t v\|_{L^\infty}E^d_{\kappa+1}.
\end{align*}
Summing over $|a|\leq \kappa$, and noting that
\begin{align*}
\sum_{|a|\leq\kappa} \int_{\mathbb{R}^n}
\big|( Z^a v\cdot\nabla)  d\big|^2
\ \rmd x \lesssim E^v_{\kappa} E^d_{\kappa-1}\lesssim \ve E^d_{\kappa-1},
\end{align*}
we can deduce
\begin{align*}
&\sum_{|a|\leq\kappa}\Big(\frac 12 \int_{\mathbb{R}^n}|\partial Z^a d|^2\ \rmd x-\frac{1}{2}\int_{\mathbb{R}^n}\big|(v\cdot\nabla) Z^a d\big|^2\ \rmd x\\
&\quad+ \int_{\mathbb{R}^n}
( Z^a v\cdot\nabla)  d \cdot\partial_tZ^a d\ \rmd x
+\frac12 \int_{\mathbb{R}^n}
\big|( Z^a v\cdot\nabla)  d\big|^2
\ \rmd x \Big)\nonumber \\
&\sim \frac 12 \sum_{|a|\leq\kappa} \int_{\mathbb{R}^n}|\partial Z^a d|^2\ \rmd x
=\frac 12 E^d_{\kappa+1}.
\end{align*}
Here we have used the assumption $E^v_{\kappa}\lesssim\epsilon$ and $E^d_{\kappa-1}\lesssim\epsilon$ again.
This gives \eqref{prioridh}.


\subsection{Lower-order energy estimate for the orientation field}
This subsection is devoted to the lower-order energy estimate for the orientation field $d$.
It turns out that the lower-order energy is uniformly bounded. To this end, we need to obtain the subcritical decay for the nonlinearities or say, $L^1$ integrability in time.

Let $\kappa \geq 9$, $0\leq|a| \leq \kappa-2$.
Taking the $L^2$ inner product of  ${\eqref{LiquidHyper_GeDe}}_2$ with $\partial_t Z^a d$, we have
\begin{eqnarray}\label{HighEnergyd}
\frac 12 \frac{d}{dt}
\int_{\mathbb{R}^n}|\partial Z^a d|^2\ \rmd x
=\int_{\mathbb{R}^n}
f^2_a \cdot  \partial_t Z^a d  \ \rmd x
. \nonumber
\end{eqnarray}
Recalling the expression of $f^2_a$ in \eqref{f1f2}, we will rewrite the right hand side of the above equality as
\begin{align*} 
&-\sum_{b+c=a} C_a^b\int_{\mathbb{R}^3}
(2Z^b v\cdot\nabla\partial_t Z^c d+\partial_t Z^b v\cdot\nabla Z^c d)\cdot  \partial_t Z^a d \ \rmd x\\
&+\int_{\mathbb{R}^3}
\sum_{b+c+e=a} C_a^{b,c}
(\nabla Z^b d\cdot\nabla Z^c d-\partial_t Z^b d\cdot \partial_t Z^c d) Z^e d\cdot \partial_t Z^a d\ \rmd x\\
&-\sum_{b+c+e=a}C_a^{b,c}
\int_{\mathbb{R}^3}Z^b v\cdot\nabla( Z^c v\cdot \nabla Z^e d)\cdot\partial_t Z^a d\ \rmd x\\
&-\sum_{b+c+e+f=a} C_a^{b,c,e} \int_{\mathbb{R}^3}\big[2 (Z^b v\cdot\nabla) Z^c d\cdot\partial_t Z^e d \big] Z^f d
\cdot \partial_t Z^a d \ \rmd x \\
&-\sum_{b+c+e+f+g=a}C_a^{b,c,e,f}\int_{\mathbb{R}^3}
\big[  ( Z^b v\cdot \nabla) Z^c d \cdot ((Z^e v\cdot \nabla) Z^f d)  \big] Z^g d\cdot \partial_tZ^a d \ \rmd x\\
&=M_1+M_2+M_3+M_4+M_5.
\end{align*}
In the sequel, we will estimate $M_1$ to $M_5$ one by one.

We first estimate $M_1$. There are two terms inside the expression for $M_1$. In order to estimate them in a uniform way, we write by integration by parts that
\begin{align*}
&\sum_{b+c=a} C_a^b\int_{\mathbb{R}^3}
(\partial_t Z^b v\cdot\nabla Z^c d)\cdot  \partial_t Z^a d \ \rmd x
=\sum_{b+c=a} C_a^b\partial_t\int_{\mathbb{R}^3}
( Z^b v\cdot\nabla Z^c d)\cdot  \partial_t Z^a d \ \rmd x \\
&-\sum_{b+c=a} C_a^b\int_{\mathbb{R}^3}
( Z^b v\cdot\nabla\partial_t Z^c d)\cdot  \partial_t Z^a d \ \rmd x
-\sum_{b+c=a} C_a^b\int_{\mathbb{R}^3}
( Z^b v\cdot\nabla Z^c d)\cdot  \partial^2_t Z^a d \ \rmd x.
\end{align*}
Thus $M_1$ can be bounded by
\begin{align*}
-\sum_{b+c=a} C_a^b\partial_t\int_{\mathbb{R}^3}
( Z^b v\cdot\nabla Z^c d)\cdot  \partial_t Z^a d \ \rmd x
+\int_{\mathbb{R}^3}
 |Z^{|a|} v| |\partial Z^{|a|} d| |\partial^2 Z^{|a|} d| \ \rmd x.
\end{align*}
Now we estimate the last term in the above expression.
For the integral domain of $\{ r\leq \langle t \rangle /2 \}$, one deduces from Lemma \ref{lemmaWeightedNorm} and the Sololev inequality $\|u\|_{L^\infty}\lesssim \|\nabla u\|^{\frac12}_{L^2}\|\nabla^2 u\|^{\frac12}_{L^2}$ that
\begin{align*}
&\int_{r\leq \langle t \rangle /2}
 |Z^{|a|} v| |\partial Z^{|a|} d| |\partial^2 Z^{|a|} d| \ \rmd x \\
&\lesssim
\langle t\rangle^{-1} \|Z^{|a|} v\|_{L^\infty} \|\partial Z^{|a|} d\|_{L^2}
\|\langle t-r\rangle\partial^2 Z^{|a|} d\|_{L^2(r\leq \langle t \rangle /2)} \\[-4mm]\\
&\lesssim \langle t\rangle^{-1} \|\nabla Z^{\kappa-1} v\|^{\frac{1}{2}}_{L^2} (E^v_{\kappa})^{\frac14}(E^d_{\kappa-1}E^d_{\kappa+1})^{\frac12}.
\end{align*}
For the integral region of $\{ r\geq \langle t \rangle /2\}$, we have from \eqref{K-S-3D-2} that
\begin{align*}
&\int_{r\geq \langle t \rangle /2}
 |Z^{|a|} v| |\partial Z^{|a|} d| |\partial^2 Z^{|a|} d| \ \rmd x \\
&\lesssim
\langle t\rangle^{-1} \|r Z^{|a|} v\|_{L^\infty(r\geq \langle t \rangle /2)} \|\partial Z^{|a|} d\|_{L^2}
\|\partial^2 Z^{|a|} d\|_{L^2} \\[-4mm]\\
&\lesssim \langle t\rangle^{-1} \|\nabla Z^{\kappa} v\|^{\frac{1}{2}}_{L^2} (E^v_{\kappa})^{\frac14}(E^d_{\kappa-1}E^d_{\kappa+1})^{\frac12}.
\end{align*}

Next, we write $M_2$ as:
\begin{eqnarray*}
M_2=\int_{\mathbb{R}^3}
\sum_{b+c+e=a} C_a^{b,c}
(\nabla Z^b d\cdot\nabla Z^c d-\partial_t Z^b d\cdot \partial_t Z^c d) Z^e d\cdot \partial_t Z^a d\ \rmd x.
\end{eqnarray*}
In the integral domain of $\{r\leq \langle t\rangle/2\}$,
employing \eqref{KSWeiIn2}, \eqref{KSWeiIn3} and Lemma \ref{lemmaWeightedNorm} yields
\begin{align*}
&\int_{r\leq \langle t\rangle/2}
\sum_{\tiny\substack{ b+c+e=a\\|e|\geq {|a|}/2 }}C_a^{b,c}
(\nabla Z^b d\cdot\nabla Z^c d-\partial_t Z^b d\cdot \partial_t Z^c d) Z^e d\cdot \partial_t Z^a d\ \rmd x\\
&\lesssim
\sum_{|e|\geq {|a|}/2}
\|\partial Z^{[|a|/2]} d\|_{L^6(r\leq \langle t\rangle/2)}
\|\partial Z^{[|a|/2]} d\|_{L^6(r\leq \langle t\rangle/2)}
\|Z^e d\|_{L^6}\|\partial_t Z^a d\|_{L^2}\\
&+\sum_{|e|\leq \frac{|a|}{2}}\|\partial Z^{|a|} d\|_{L^6(r\leq \langle t\rangle/2)}
\|\partial Z^{[|a|/2]} d\|_{L^3(r\leq \langle t\rangle/2)} \|Z^e d\|_{L^\infty} \| \partial_t Z^a d\|_{L^2}\\
&\lesssim\langle t\rangle^{-\frac32} E^d_{\kappa-1} (E^d_{\kappa+1})^{\frac12}(1+(E^d_{\kappa-1})^{\frac12}).
\end{align*}
To estimate the integral domain of $\{r\geq \langle t\rangle/2\}$, we need to use the null condition. To this end, we first write
\begin{align*}
&\sum_{b+c+e=a} C_a^{b,c}\int_{r\geq\langle t\rangle/2}
(\nabla Z^b d\cdot\nabla Z^c d-\partial_t Z^b d\cdot \partial_t Z^c d) Z^e d\cdot \partial_t Z^a d\ \rmd x \\
&=\sum_{b+c+e=a} C_a^{b,c}\int_{r\geq\langle t\rangle/2}
(\omega_i\partial_t+\nabla_i) Z^b d \cdot(\omega_i\partial_t-\nabla_i) Z^c d Z^e d\cdot \partial_t Z^a d\ \rmd x \\
&\lesssim\sum_{b+c+e=a} \int_{r\geq\langle t\rangle/2}
|(\omega_i\partial_t+\nabla_i) Z^b d| |\partial Z^c d| |Z^e d| |\partial_t Z^a d|\ \rmd x,
\end{align*}
where $\omega=x/r$.
Consequently, employing \eqref{K-S-3D-1} and Lemma \ref{lemWeiGood},
the above can be further controlled by
\begin{align*}
&\sum_{|e|\leq |a|/2}\|(\omega_i\partial_t+\nabla_i) Z^{|a|} d\|_{L^\infty(r\geq \langle t\rangle/2)}
\|\partial Z^{|a|} d\|_{L^2} \|Z^e d\|_{L^\infty} \|\partial_t Z^a d\|_{L^2}\\
&+\sum_{|e|\geq {|a|}/2}\|(\omega_i\partial_t+\nabla_i)
Z^{[|a|/2]} d\|_{L^\infty(r\geq \langle t\rangle/2)}
\|\partial Z^{[|a|/2]} d\|_{L^3} \|Z^e d\|_{L^6} \|\partial_t Z^a d\|_{L^2}\\
&\lesssim \langle t\rangle^{-\frac32} (E^d_{\kappa+1})^{\frac12} E^d_{\kappa-1}(1+E^d_{\kappa-1})^{\frac12}.
\end{align*}
Here we have used the spatial decomposition along radial and reverse direction:
\begin{equation*}
\nabla=\frac{x}{r}\partial_r-\frac{\omega}{r}\wedge\Omega.
\end{equation*}

Now we estimate $M_3$, $M_4$ and $M_5$ which
contain  cubic, quartic and quintic terms but no quadratic term.
Thanks to \eqref{K-S-3D-1}, \eqref{KSWeiIn1} and Lemma \ref{lemmaWeightedNorm}, one gets
\begin{align*}
M_3&=-\sum_{b+c+e=a}C_a^{b,c}
\int_{\mathbb{R}^3}Z^b v\cdot\nabla( Z^c v\cdot \nabla Z^e d)\cdot\partial_t Z^a d\ \rmd x\\
&\lesssim
(\int_{r\geq \langle t\rangle/2}+\int_{r\geq \langle t\rangle/2})\ |Z^{|a|} v| |Z^{|a|+1} v| |\nabla Z^{|a|+1} d| |\partial_t Z^a d|\ \rmd x\\
&\leq  \|Z^{|a|} v\|_{L^\infty(r\geq \langle t\rangle/2)} \|Z^{|a|+1} v\|_{L^\infty} \|\nabla Z^{|a|+1} d\|_{L^2} \|\partial_t Z^a d\|_{L^2}\\[-4mm]\\
&\quad+ \|Z^{|a|+1} v\|^2_{L^6} \|\nabla Z^{|a|+1} d\|_{L^6(r\leq \langle t\rangle/2)} \|\partial_t Z^a d\|_{L^2}\\[-4mm]\\
&\leq \langle t\rangle^{-1}\| \nabla Z^\kappa v\|^{\frac{1}{2}}_{L^2} (E^v_{\kappa})^{\frac32} (E^d_{\kappa+1}E^d_{\kappa-1})^{\frac12}.
\end{align*}
Moreover, we have
\begin{align*}
M_4&=-\sum_{b+c+e+f=a} C_a^{b,c,e} \int_{\mathbb{R}^3}\big[2 (Z^b v\cdot\nabla) Z^c d\cdot\partial_t Z^e d \big] Z^f d
\cdot \partial_t Z^a d \ \rmd x \\
&\lesssim \sum_{b+c+e+f=a}\int_{\mathbb{R}^3} |Z^b v| |\partial Z^c d| |\partial Z^e d| |Z^f d|
|\partial_t Z^a d| \ \rmd x.
\end{align*}
By Sobolev inequalities and Lemma \ref{lemdecayliq}, the above can be further bounded by
\begin{align*}
&\sum_{\frac{|a|}{2}\leq |f|\leq |a|}
\|Z^{|a|} v\|_{L^6} \|\partial Z^{|a|} d\|_{L^6} \|\partial Z^{[|a|/2]} d\|_{L^\infty} \|Z^f d\|_{L^6}
\|\partial_t Z^a d\|_{L^2} \\
&+\sum_{|f|\leq\frac{|a|}{2}}\|Z^{|a|} v\|_{L^\infty} \|\partial Z^{|a|} d\|_{L^2} \|\partial Z^{[|a|/2]} d\|_{L^\infty} \|Z^f d\|_{L^\infty}
\|\partial_t Z^a d\|_{L^2}\\
&\lesssim \langle t\rangle^{-1}
\|\nabla Z^{\kappa} v\|^{\frac{1}{2}}_{L^2}\|\nabla Z^{\kappa-1} v\|^{\frac{1}{2}}_{L^2}
 (E^d_{\kappa-1})^{\frac32} \big[(E^d_{\kappa+1})^{\frac12}+1\big].
\end{align*}
The term $M_5$ can be estimated as
\begin{align*}
M_5&=-\sum_{b+c+e+f+g=a}C_a^{b,c,e,f}\int_{\mathbb{R}^3}
\big[  ( Z^b v\cdot \nabla) Z^c d \cdot ((Z^e v\cdot \nabla) Z^f d)  \big] Z^g d\cdot \partial_tZ^a d \ \rmd x\\
&\lesssim
\sum_{|e|+|f|+|g|\leq |a|}\int_{\mathbb{R}^3}
|Z^{|a|} v|^2 |\nabla Z^e d|
|\nabla Z^f d|  |Z^g d| |\partial_tZ^a d| \ \rmd x \\
&\lesssim \sum_{|g|\leq \frac{|a|}{2}}\|Z^{|a|} v\|^2_{L^6} \|\partial Z^{|a|} d\|_{L^2} \|\partial Z^{[|a|/2]} d\|_{L^6} \| Z^g d\|_{L^\infty}
 \| \partial_tZ^a d\|_{L^2}\\
&\quad+\sum_{\frac{|a|}{2}\leq|g|\leq |a|} \|Z^{|a|} v\|^2_{L^6} \|\partial Z^{|a|} d\|_{L^2} \|\partial Z^{[|a|/2]} d\|_{L^\infty} \|Z^g d\|_{L^6}
 \| \partial_tZ^a d\|_{L^2}.  \\
 &\lesssim \| \nabla Z^{\kappa} v\|^2_{L^2} (E^d_{\kappa-1})^{\frac32}(1+E^d_{\kappa-1})^{\frac12}.
\end{align*}

Combining all the estimates of $M_1$ to $M_5$ leads
\begin{align*}
&\frac{d}{dt}\int_{\mathbb{R}^n}\big[\frac 12 |\partial Z^a d|^2+
\sum_{b+c=a} C_a^b
( Z^b v\cdot\nabla Z^c d)\cdot  \partial_t Z^a d\big] \ \rmd x \\
&\lesssim \langle t\rangle^{-1} \|\nabla Z^{\kappa} v\|^{\frac{1}{2}}_{L^2}(E^v_{\kappa})^{\frac14}(E^d_{\kappa-1}E^d_{\kappa+1})^{\frac12}
+\langle t\rangle^{-\frac32} E^d_{\kappa-1} (E^d_{\kappa+1})^{\frac12}
+\| \nabla Z^{\kappa} v\|^2_{L^2}E^d_{\kappa-1}.
\end{align*}
Summing over $|a|\leq \kappa-2$, and noting that
\begin{align*}
&\sum_{|a|\leq\kappa-2}\int_{\mathbb{R}^n}\big[\frac 12 |\partial Z^a d|^2+
\sum_{b+c=a} C_a^b
( Z^b v\cdot\nabla Z^c d)\cdot  \partial_t Z^a d\big] \ \rmd x \\
&\sim \sum_{|a|\leq\kappa-2}\frac 12\int_{\mathbb{R}^n} |\partial Z^a d|^2
 \ \rmd x=\frac12 E^d_{\kappa-1}.
\end{align*}
Thus we finish the proof of \eqref{prioridl}.

\section*{Acknowledgement.}

The authors would like to thank Professor Fanghua Lin for suggesting this problem and his helpful discussion.
We are also grateful to the hospitality of Courant Institute of Mathematical Sciences  where this work was carried out.
Y. Cai was sponsored by the China Scholarship Council (No. 201606100111) for one year at New York University, Courant Institute of Mathematics Sciences.
W. Wang is supported by NSF of China under Grant No. 11922118, 11931010 and 11771388, and the Young Elite Scientists
Sponsorship Program by CAST.

\end{document}